\newcommand{\C}{\mathbb{C}}
\newcommand{\Ql}{\mathbb{Q}_\ell}
\newcommand{\Fl}{\mathbb{F}_\ell}
\newcommand{\Qlb}{\overline{\mathbb{Q}}_\ell}
\newcommand{\Z}{\mathbb{Z}}
\newcommand{\N}{\mathbb{N}}
\newcommand{\K}{\mathbb{K}}
\newcommand{\F}{\mathbb{F}}
\renewcommand{\O}{\mathbb{O}}
\newcommand{\E}{\mathbb{E}}
\newcommand{\bk}{\Bbbk}
\newcommand{\fg}{\mathfrak{g}}
\newcommand{\fz}{\mathfrak{z}}
\newcommand{\fp}{\mathfrak{p}}
\newcommand{\fl}{\mathfrak{l}}
\newcommand{\fu}{\mathfrak{u}}
\newcommand{\fh}{\mathfrak{h}}
\newcommand{\cN}{\mathscr{N}}
\newcommand{\cO}{\mathscr{O}}
\newcommand{\fL}{\mathfrak{L}}
\newcommand{\xymap}{\varpi}
\newcommand{\Db}{D^{\mathrm{b}}}
\newcommand{\Perv}{\mathsf{Perv}}
\newcommand{\cA}{\mathscr{A}}
\newcommand{\Rep}{\mathsf{Rep}}
\newcommand{\For}{\mathrm{For}}
\newcommand{\bT}{\mathbb{T}}
\newcommand{\Res}{\mathbf{R}}
\newcommand{\Ind}{\mathbf{I}}
\newcommand{\uInd}{\underline{\Ind}}
\newcommand{\cL}{\mathcal{L}}
\newcommand{\IC}{\mathcal{IC}}
\newcommand{\cF}{\mathcal{F}}
\newcommand{\cG}{\mathcal{G}}
\newcommand{\cH}{\mathcal{H}}
\newcommand{\cI}{\mathcal{I}}
\newcommand{\ubk}{\underline{\bk}}
\newcommand{\Spr}{\underline{\mathsf{Spr}}}
\newcommand{\cE}{\mathcal{E}}
\newcommand{\cS}{\mathcal{S}}
\newcommand{\cD}{\mathcal{D}}
\newcommand{\simto}{\xrightarrow{\sim}}
\DeclareMathOperator{\End}{End}
\DeclareMathOperator{\Hom}{Hom}
\DeclareMathOperator{\Ext}{Ext}
\DeclareMathOperator{\rad}{rad}
\DeclareMathOperator{\Irr}{Irr}
\newcommand{\id}{\mathrm{id}}
\def\lotimes{\@ifnextchar_{\@lotimessub}{\@lotimesnosub}}
\def\@lotimessub_#1{\mathchoice{\mathbin{\mathop{\otimes}^L}_{#1}}%
  {\otimes^L_{#1}}{\otimes^L_{#1}}{\otimes^L_{#1}}}
\def\@lotimesnosub{\mathbin{\mathop{\otimes}^L}}
\newcommand{\GL}{\mathrm{GL}}
\newcommand{\fgl}{\mathfrak{gl}}
\newcommand{\SL}{\mathrm{SL}}
\newcommand{\fS}{\mathfrak{S}}
\newcommand{\la}{\lambda}
\newcommand{\Psico}{\Psi^{\mathrm{co}}}
\newcommand{\psico}{\psi^{\mathrm{co}}}
\newcommand{\Part}{\mathrm{Part}}
\newcommand{\Comp}{\N^\infty}
\newcommand{\sa}{\mathsf{a}}
\newcommand{\sfb}{\mathsf{b}}
\newcommand{\sm}{\mathsf{m}}
\newcommand{\uPart}{\underline{\Part}}
\newcommand{\bL}{\mathbf{L}}
\newcommand{\blambda}{{\boldsymbol{\lambda}}}
\newcommand{\tr}{\mathrm{t}}
\newtheorem*{thm*}{Theorem}
\numberwithin{equation}{section}
\newtheorem{thm}{Theorem}[section]
\newtheorem{lem}[thm]{Lemma}
\newtheorem{prop}[thm]{Proposition}
\newtheorem{cor}[thm]{Corollary}
\theoremstyle{definition}
\newtheorem{defn}[thm]{Definition}
\theoremstyle{remark}
\newtheorem{rmk}[thm]{Remark}
\DeclareFontFamily{U}{mathx}{\hyphenchar\font45}
\DeclareFontShape{U}{mathx}{m}{n}{
      <5> <6> <7> <8> <9> <10>
      <10.95> <12> <14.4> <17.28> <20.74> <24.88>
      mathx10
      }{}
\DeclareSymbolFont{mathx}{U}{mathx}{m}{n}
\DeclareMathAccent{\widebar}{0}{mathx}{"73}
\newlength{\tabwidth}
\newlength{\tabheight}
\newcommand{\tabstyle}{\textstyle}
\newlength{\tabrulel}
\newlength{\tabruler}
\newlength{\tabrulet}
\newlength{\tabruleb}
\newlength{\tabwidthx}
\newlength{\tabheightx}
\def\gentabbox#1#2#3#4#5#6#7{\vbox to \tabheight{%
  \setlength{\tabrulel}{#3}\setlength{\tabruler}{#4}%
  \setlength{\tabrulet}{#5}\setlength{\tabruleb}{#6}%
  \setlength{\tabwidthx}{#1\tabwidth}\addtolength{\tabwidthx}{0.5\tabrulel}%
    \addtolength{\tabwidthx}{0.5\tabruler}%
  \setlength{\tabheightx}{#2\tabheight}\addtolength{\tabheightx}{-\tabheight}%
  \hbox to #1\tabwidth{%
    \hspace{-0.5\tabrulel}\rule{\tabrulel}{#2\tabheight}\hspace{-\tabrulel}%
    \vbox to #2\tabheight{\hsize=\tabwidthx%
      \vspace{-0.5\tabrulet}\hrule width\tabwidthx height\tabrulet%
      \vspace{-0.5\tabrulet}\vss%
      \hbox to \tabwidthx{\hss#7\hss}%
        \vss\vspace{-0.5\tabruleb}%
      \hrule width\tabwidthx height\tabruleb\vspace{-0.5\tabruleb}}%
    \hspace{-\tabruler}\rule{\tabruler}{#2\tabheight}\hspace{-0.5\tabruler}}%
  \vspace{-\tabheightx}}}
\def\genblankbox#1#2{\vbox to \tabheight{\vfil\hbox to #1\tabwidth{\hfil}}}
\def\tabbox#1#2#3{\gentabbox{#1}{#2}{0.4pt}{0.4pt}{0.4pt}{0.4pt}{#3}}
\def\boldtabbox#1#2#3{\gentabbox{#1}{#2}{1.2pt}{1.2pt}{1.2pt}{1.2pt}{#3}}
\newenvironment{tableau}{\bgroup\catcode`\:=13 \catcode`\.=13
  \catcode`\;=13 \catcode`\>=13 \catcode`\^=13 
  \def\b##1##2##3{\boldtabbox{##1}{##2}{\vbox{##3}}}%
  \def\n##1##2##3{\tabbox{##1}{##2}{\vbox{##3}}}%
  \def\c{\tabbox{1}{1}{{}}}
  \vcenter\bgroup\offinterlineskip}{\egroup\egroup}
\title[Modular generalized Springer correspondence I]{Modular generalized Springer correspondence I: the general linear group}
\author{Pramod N. Achar}
\address{Department of Mathematics\\
  Louisiana State University\\
  Baton Rouge, LA 70803\\
  U.S.A.}
\email{pramod@math.lsu.edu}
\author{Anthony Henderson}
\address{School of Mathematics and Statistics\\
  University of Sydney, NSW 2006\\
  Australia}
\email{anthony.henderson@sydney.edu.au}
\author{Daniel Juteau}
\address{Laboratoire de Math\'ematiques Nicolas Oresme, Universit\'e de Caen Basse-Normandie, CNRS, 14032 Caen, France}
\email{daniel.juteau@unicaen.fr}
\author{Simon Riche}
\address{Universit{\'e} Blaise Pascal - Clermont-Ferrand II, Laboratoire de Math{\'e}matiques, CNRS, UMR 6620, Campus universitaire des C{\'e}zeaux, F-63177 Aubi{\`e}re Cedex, France
}
\email{simon.riche@math.univ-bpclermont.fr}
\subjclass[2010]{Primary 17B08, 20G05}
\thanks{P.A. was supported by NSF Grant No.~DMS-1001594.  A.H. was supported by ARC Future Fellowship Grant No.~FT110100504. D.J. was supported by ANR Grant No.~ANR-09-JCJC-0102-01. S.R. was supported by ANR Grants No.~ANR-09-JCJC-0102-01 and ANR-2010-BLAN-110-02.}
\begin{document}

\begin{abstract}
We define a generalized Springer correspondence for the group $\GL(n)$ over any field. We also determine the cuspidal pairs, and compute the correspondence explicitly. Finally we define a stratification of the category of equivariant perverse sheaves on the nilpotent cone of $\GL(n)$ satisfying the `recollement' properties, and with subquotients equivalent to categories of representations of a product of symmetric groups.
\end{abstract}

\maketitle

\section{Introduction}
\label{sec:intro}

\subsection{}
This paper is the first in a series devoted to constructing and describing a \emph{modular generalized Springer correspondence} for reductive algebraic groups, with a view to a future theory of modular character sheaves. In this part we lay the foundations of our approach, and we construct and describe explicitly the correspondence in the case $G=\GL(n)$.

\subsection{}
Let $G$ be a connected complex reductive group with Weyl group $W$, and let $\bk$ be a field.  The \emph{Springer correspondence} over $\bk$ is an injective map from the set of isomorphism classes of irreducible $\bk[W]$-modules to the set of isomorphism classes of simple $G$-equivariant perverse $\bk$-sheaves on the nilpotent cone $\cN_G$ for $G$:
\begin{equation}\label{eqn:ord-springer}
\Irr(\bk[W]) \;\hookrightarrow\; \Irr(\Perv_G(\cN_G,\bk)).
\end{equation}
For $\bk$ of characteristic zero, this correspondence was effectively defined by Springer in~\cite{springer}. For $\bk$ of positive characteristic, this \emph{modular Springer correspondence} was defined by the third author in~\cite{juteau}; see also~\cite[Corollary 5.3]{ahjr}.

In general, the map~\eqref{eqn:ord-springer} is not surjective, and one may seek to understand the objects missing from its image.  For $\bk$ of characteristic zero, a uniform explanation was given by Lusztig~\cite{lusztig}.  His \emph{generalized Springer correspondence} is a bijection
\begin{equation}\label{eqn:gen-springer}
\bigsqcup_{\substack{\text{$L \subset G$ a Levi subgroup} \\ \text{$\cF \in \Irr(\Perv_L(\cN_L,\bk))$ cuspidal}}} \Irr(\bk[N_G(L)/L]) \;\longleftrightarrow\; \Irr(\Perv_G(\cN_G,\bk)).
\end{equation}
(Here, the disjoint union is over $G$-conjugacy classes of pairs $(L,\cF)$. The definition of `cuspidal' simple perverse sheaf will be recalled in \S\ref{ss:cuspidal}.)  The original Springer correspondence~\eqref{eqn:ord-springer} is the part of~\eqref{eqn:gen-springer} where $L=T$ is a maximal torus. The generalized Springer correspondence was described explicitly for all groups by combining works of Alvis, Hotta, Lusztig, Shoji, Spaltenstein and Springer; see \cite[\S 6.7 and \S 12]{shoji} for examples and references.

It is natural to ask whether a
bijection such as~\eqref{eqn:gen-springer} still holds when $\bk$ has positive characteristic, that is, whether there is a \emph{modular generalized Springer correspondence}.
In the main results of this paper, Theorems~\ref{thm:main} and~\ref{thm:explicit}, we prove this property for the group $G = \GL(n)$
and give an explicit combinatorial description of the bijection in that case.

\subsection{}
The main reason for treating the case of $\GL(n)$ separately is that this case avoids most of the technicalities related to non-constant local systems and bad primes (cf.~the different behaviour of the modular Springer correspondence for other classical groups in characteristic~$2$~\cite{jls}). Nevertheless, it still displays the key features of the more general situation.

When $G=\GL(n)$, the set $\Irr(\Perv_G(\cN_G,\bk))$ is essentially independent of $\bk$ (it is in bijection with the set of partitions of $n$). In contrast, the Weyl group $\fS_n$ and the
other finite groups $N_G(L)/L$ may have fewer irreducible
representations in positive characteristic than in characteristic zero.  Our results mean that this is exactly compensated for by the existence of more cuspidal simple perverse sheaves.  Indeed, when $\bk$ has characteristic zero, the only Levi subgroups of $\GL(n)$ admitting a cuspidal simple perverse sheaf are the maximal tori, and~\eqref{eqn:ord-springer} is already a bijection.  When $\bk$ has characteristic $\ell>0$, any Levi subgroup of $\GL(n)$ isomorphic to a product of groups of the form $\GL(\ell^k)$ (with $k \geq 0$) admits a unique cuspidal simple perverse sheaf up to isomorphism; see Theorem~\ref{thm:cuspidals-GL-new}.

\subsection{}
A striking new phenomenon in the modular case is that the disjoint
union in~\eqref{eqn:gen-springer} is related to a nontrivial
stratification, or more precisely an iterated `recollement', of the category $\Perv_G(\cN_G,\bk)$.
This stratification generalizes
the fact that the category of finite-dimensional $\bk[W]$-modules can be realized as a
quotient of $\Perv_G(\cN_G,\bk)$; see~\cite{mautner} and~\cite[Corollary 5.2]{ahjr}.  More generally, larger Levi
subgroups in~\eqref{eqn:gen-springer} correspond to lower strata in
the stratification; for a precise statement, see Theorem~\ref{thm:recollement}. 

Note that this property is invisible when $\bk$ has characteristic $0$ because of the semisimplicity of the category $\Perv_G(\cN_G,\bk)$. However a more subtle property holds in this case, resembling the stratification statement: if two simple perverse sheaves correspond under~\eqref{eqn:gen-springer} to non-conjugate pairs $(L,\cF)$, then there are no nontrivial morphisms between them (of any degree) in the derived category of sheaves on $\cN_G$; see~\cite[Theorem 24.8(c)]{lusztig-cs}.

\subsection{}
In~\cite{lusztig}, Lusztig was able to define the generalized Springer correspondence before classifying cuspidal simple perverse sheaves.  His construction made use of the decomposition theorem, which is not available in the modular case. Our approach is therefore different: we classify cuspidal simple perverse sheaves and prove the correspondence at the same time, deducing the existence of certain cuspidal objects from Lusztig's characteristic-zero classification by modular reduction.

A further difference is that Lusztig worked on the unipotent variety in the group $G$ rather than on the nilpotent cone $\cN_G$, and allowed $G$ to be defined over a field of characteristic $p$ (with $\bk=\Qlb$ for $\ell\neq p$). In this paper the group $G$ is over $\C$, so its unipotent variety and nilpotent cone are ($G$-equivariantly) isomorphic. Following~\cite{lusztig-fourier, mirkovic}, we use the nilpotent cone because a crucial role in our arguments is played by the Fourier--Sato transform on the Lie algebra $\fg$.

From~\cite{juteau} and~\cite{mautner}, one knows that an object in $\Irr(\Perv_G(\cN_G,\bk))$ belongs to the image of the Springer correspondence~\eqref{eqn:ord-springer} if and only if its Fourier--Sato transform has dense support in $\fg$; if so, this Fourier--Sato transform is the intersection cohomology extension of a local system on the regular semisimple set, namely the local system determined by the corresponding irreducible $\bk[W]$-module. Our generalization, carried out in this paper when $G=\GL(n)$, amounts to describing the Fourier--Sato transform of a general object in $\Irr(\Perv_G(\cN_G,\bk))$ similarly, as the intersection cohomology extension of a local system on a stratum in the Lusztig stratification of $\fg$. The partition of $\Irr(\Perv_G(\cN_G,\bk))$ according to which stratum occurs is what corresponds to the disjoint union on the left-hand side of~\eqref{eqn:gen-springer}, and what gives rise to the stratification of the category $\Perv_G(\cN_G,\bk)$.

\subsection{}
After this work was completed we learnt that Mautner has a program of conjectures and partial results describing some properties of the equivariant derived category $\Db_G(\cN_G,\bk)$ and its subcategory $\Perv_G(\cN_G,\bk)$, which also adapts some aspects of Lusztig's generalized Springer correspondence to the modular setting. These results are quite different from ours; however they lead to an alternative proof of the classification of cuspidal simple perverse sheaves for $\GL(n)$ (Theorem \ref{thm:cuspidals-GL-new}). We thank Carl Mautner for explaining these results to us.

Finally, let us note that, as was pointed out to us by M.~Geck and G.~Malle, the combinatorics of our modular generalized Springer correspondence for $\GL(n)$ is reminiscent of the partitioning into Harish--Chandra series of irreducible unipotent representations of the finite group $\GL(n,q)$ in characteristic $\ell$, where $q\equiv 1\pmod \ell$. This could be regarded as evidence for a theory of modular character sheaves.

\subsection{Organization of the paper}
We begin in Section~\ref{sec:generalities} with a review of relevant
background on such topics as cuspidal perverse sheaves, induction and restriction functors, Fourier--Sato transform, and modular reduction.  In Section~\ref{sec:mainthm}
we prove the existence of the
bijection~\eqref{eqn:gen-springer} for $G = \GL(n)$ and give an
explicit combinatorial description of it.  The main results are stated as Theorems~\ref{thm:cuspidals-GL-new}, \ref{thm:main}, and~\ref{thm:explicit}.
Finally, the
aforementioned stratification of $\Perv_G(\cN_G,\bk)$ is proved in
Section~\ref{sec:recollement}.

\section{General background}
\label{sec:generalities}

Let $\bk$ be a field of characteristic $\ell \geq 0$. We consider sheaves with coefficients in $\bk$, but our varieties are over $\C$ (with the strong topology). For a complex algebraic group $H$ acting on a variety $X$, we denote by $\Db_H(X,\bk)$ the constructible $H$-equivariant derived category defined in~\cite{bl}, and by $\Perv_H(X,\bk)$ its subcategory of $H$-equivariant perverse $\bk$-sheaves on $X$. The constant sheaf on $X$ with value $\bk$ is denoted by $\ubk_X$, or simply $\ubk$.

The cases we consider most frequently are $X=\fh$ (the Lie algebra of $H$) and $X=\cN_H$ (the nilpotent cone of $H$). We usually make no notational distinction between a perverse sheaf on a closed subvariety $Y\subset X$ and its extension by zero to $X$; however, it is helpful to distinguish between $\cF\in\Perv_H(\cN_H,\bk)$ and $(a_H)_!\cF\in\Perv_H(\fh,\bk)$, where $a_H:\cN_H\hookrightarrow\fh$ is the inclusion.

\subsection{Cuspidal pairs}
\label{ss:cuspidal}

Throughout the paper, $G$ denotes a connected reductive complex algebraic group. Recall that $G$ has finitely many orbits in $\cN_G$, and that every simple object in $\Perv_G(\cN_G,\bk)$ is of the form $\IC(\cO,\cE)$ where $\cO\subset\cN_G$ is a $G$-orbit and $\cE$ is an irreducible $G$-equivariant $\bk$-local system on $\cO$. Such local systems on $\cO$ correspond to irreducible representations of the component group $A_G(x):=G_x/G_x^\circ$ on $\bk$-vector spaces, where $x$ is any element of~$\cO$.  

Let $P \subset G$ be a parabolic subgroup, with unipotent radical $U_P$, and let $L \subset P$ be a Levi factor. Then one can identify $L$ with $P/U_P$ through the natural morphism $L \hookrightarrow P \twoheadrightarrow P/U_P$, and thus define the diagram
\begin{equation}
\label{eqn:diagram-restriction-N}
\xymatrix@C=1.5cm{
\cN_L & \cN_P \ar[r]^-{i_{L \subset P}} \ar[l]_-{p_{L \subset P}} & \cN_G.
}
\end{equation}
Consider the functors
\begin{align*}
\Res^G_{L \subset P} & := (p_{L \subset P})_* \circ (i_{L \subset P})^! : \Db_G(\cN_G,\bk) \to \Db_L(\cN_L,\bk), \\ 
'\Res^G_{L \subset P} & := (p_{L \subset P})_! \circ (i_{L \subset P})^* : \Db_G(\cN_G,\bk) \to \Db_L(\cN_L,\bk).
\end{align*}
By \cite[Proposition 4.7]{ahr} and \cite[Proposition 3.1]{am}, these functors restrict to exact functors
\[
\Res^G_{L \subset P}, {}'\Res^G_{L \subset P} : \Perv_G(\cN_G,\bk) \to \Perv_{L}(\cN_{L},\bk).
\]
We also define the functor
\[
\Ind_{L \subset P}^G := \gamma_P^G \circ (i_{L \subset P})_! \circ (p_{L \subset P})^* : \Db_{L}(\cN_{L},\bk) \to \Db_G(\cN_G,\bk).
\]
(Here $\gamma_P^G$ is the left adjoint to the forgetful functor $\For^G_P$; see e.g.~\cite[\S B.10.1]{ahr} for a precise definition.)
Then $\Ind_{L \subset P}^G$ is left adjoint to $\Res^G_{L \subset P}$ and right adjoint to $'\Res^G_{L \subset P}$, and it also restricts to an exact functor (see \cite[Proposition 3.1]{am})
\[
\Ind_{L \subset P}^G : \Perv_{L}(\cN_{L},\bk) \to \Perv_G(\cN_G,\bk).
\]

\begin{prop}
\label{prop:conditions-cuspidal}
Let $\cF$ be a simple object in $\Perv_G(\cN_G,\bk)$. The following conditions are equivalent:
\begin{itemize}
\item[(1)] for any parabolic subgroup $P \subsetneq G$ and any Levi factor $L \subset P$ we have $\Res^G_{L \subset P} (\cF) =0$;
\item[$(1')$] for any parabolic subgroup $P \subsetneq G$ and any Levi factor $L \subset P$ we have $'\Res^G_{L \subset P} (\cF) =0$;
\item[(2)] for any parabolic subgroup $P \subsetneq G$ and Levi factor $L \subset P$, and for any object $\cG$ in $\Perv_{L}(\cN_{L},\bk)$, $\cF$ does not appear in the head of $\Ind_{L \subset P}^G(\cG)$;
\item[$(2')$] for any parabolic subgroup $P \subsetneq G$ and Levi factor $L \subset P$, and for any object $\cG$ in $\Perv_{L}(\cN_{L},\bk)$, $\cF$ does not appear in the socle of $\Ind_{L \subset P}^G(\cG)$.
\end{itemize}
\end{prop}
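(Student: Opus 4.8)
The plan is to prove the chain of implications $(1) \Rightarrow (2) \Rightarrow (1) $ and $(1') \Rightarrow (2') \Rightarrow (1')$, together with a symmetry argument linking the primed and unprimed versions. The key tool throughout is the adjunction package recalled just before the proposition: $\Ind_{L\subset P}^G$ is left adjoint to $\Res^G_{L\subset P}$ and right adjoint to ${}'\Res^G_{L\subset P}$, and all three functors are exact on the perverse categories.

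First I would treat $(1)\Leftrightarrow(2)$. For $(1)\Rightarrow(2)$: suppose $\cF$ appears in the head of $\Ind^G_{L\subset P}(\cG)$ for some $\cG\in\Perv_L(\cN_L,\bk)$, i.e.\ there is a nonzero map $\Ind^G_{L\subset P}(\cG)\twoheadrightarrow\cF$. By adjunction $\Hom(\Ind^G_{L\subset P}(\cG),\cF)\cong\Hom(\cG,\Res^G_{L\subset P}(\cF))$, so $\Res^G_{L\subset P}(\cF)\neq 0$, contradicting $(1)$. For $(2)\Rightarrow(1)$: if $\Res^G_{L\subset P}(\cF)\neq 0$, then since it is a nonzero perverse sheaf it has a simple quotient, say $\cG_0$, giving a surjection $\Res^G_{L\subset P}(\cF)\twoheadrightarrow\cG_0$, hence a nonzero map $\cF\to\Ind^G_{L\subset P}(\Res^G_{L\subset P}(\cF))$ composed down to a nonzero map $\cF\to\Ind^G_{L\subset P}(\cG_0)$? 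This last step needs care; the clean route is instead to use the unit of the adjunction. Actually the direct argument is: $0\neq\Hom(\cF,\cF)$ — no. Let me reorganize: apply adjunction the other way. We have $\Hom(\cF, \Ind^G_{L\subset P}(\cG)) $ is not what we want either. The correct move for $(2)\Rightarrow(1)$ is: pick a simple \emph{sub}object $\cG_0\hookrightarrow\Res^G_{L\subset P}(\cF)$ is wrong direction too; rather, use that $\Res$ here is the functor with $\Ind$ as \emph{left} adjoint, so $\Hom(\cG,\Res^G_{L\subset P}\cF)\cong\Hom(\Ind^G_{L\subset P}\cG,\cF)$, and taking $\cG$ to be any simple quotient of $\Res^G_{L\subset P}\cF$ (which exists and is nonzero when the Res is nonzero) produces a nonzero $\Ind^G_{L\subset P}\cG\to\cF$; since $\cF$ is simple this is surjective, so $\cF$ lies in the head of $\Ind^G_{L\subset P}\cG$, contradicting $(2)$. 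The primed statements $(1')\Leftrightarrow(2')$ are handled identically, using instead that $\Ind^G_{L\subset P}$ is \emph{right} adjoint to ${}'\Res^G_{L\subset P}$, so $\Hom({}'\Res^G_{L\subset P}\cF,\cG)\cong\Hom(\cF,\Ind^G_{L\subset P}\cG)$; a simple subobject of ${}'\Res^G_{L\subset P}\cF$ then produces a nonzero $\cF\to\Ind^G_{L\subset P}\cG$ which is injective by simplicity of $\cF$, putting $\cF$ in the socle.

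It remains to connect the primed and unprimed families, i.e.\ $(1)\Leftrightarrow(1')$. Here I would invoke a duality: Verdier duality $\mathbb{D}$ (or the combination of $\mathbb{D}$ with the automorphism of $G$ sending $P$ to an opposite parabolic $P^-$ with the same Levi $L$) interchanges $(p_{L\subset P})_*\circ(i_{L\subset P})^!$ with $(p_{L\subset P^-})_!\circ(i_{L\subset P^-})^*$ up to these symmetries; equivalently one uses the known relation $\mathbb{D}\circ\Res^G_{L\subset P}\cong {}'\Res^G_{L\subset P^-}\circ\mathbb{D}$ on $\Db_G(\cN_G,\bk)$. Since $\mathbb{D}$ is an antiequivalence preserving simplicity and exchanging vanishing with vanishing, condition $(1)$ for $\cF$ (quantified over all $P\subsetneq G$ and all Levi factors $L$) is equivalent to condition $(1')$ for $\mathbb{D}\cF$ quantified over all such pairs; and as $\cF$ is simple, $\mathbb{D}\cF$ is again a simple object of $\Perv_G(\cN_G,\bk)$, and the quantifier ranges over the same set of pairs, so in fact $(1)$ for $\cF$ $\Leftrightarrow$ $(1')$ for $\cF$. (One must check that as $P$ ranges over all proper parabolics with all Levi factors, so does $P^-$; this is standard since every parabolic is conjugate to its opposites and Levi factors are permuted transitively.)

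The main obstacle is pinning down precisely which duality-type statement is available and citable for the pair $(\Res^G_{L\subset P}, {}'\Res^G_{L\subset P})$ in the \emph{modular} equivariant setting: one needs $\mathbb{D}\circ\Res^G_{L\subset P}\cong{}'\Res^G_{L\subset P^-}\circ\mathbb{D}$ (or a variant with a shift), which follows formally from the interchange of $f_*$ with $f_!$ and of $g^!$ with $g^*$ under $\mathbb{D}$ applied to the correspondence~\eqref{eqn:diagram-restriction-N}, but the identification of $\cN_P$ for $P$ and for $P^-$ and the compatibility with the equivariant structure $\gamma$ deserves a careful word. Everything else is soft: the adjunction computations are immediate, and the passage from "nonzero perverse sheaf" to "has a simple sub/quotient" is automatic since $\Perv$ is a finite-length abelian category (finitely many orbits, finitely many irreducible local systems on each).
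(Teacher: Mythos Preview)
Your adjunction arguments for $(1)\Leftrightarrow(2)$ and $(1')\Leftrightarrow(2')$ are essentially fine (in the $(2)\Rightarrow(1)$ direction you actually want a simple \emph{sub}object of $\Res^G_{L\subset P}\cF$, not a simple quotient, since $\Ind$ is \emph{left} adjoint to $\Res$; but the cleanest fix is simply to take $\cG=\Res^G_{L\subset P}\cF$ itself and use the identity map). The paper treats these two equivalences in one sentence, exactly as you do.

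The real problem is your argument for $(1)\Leftrightarrow(1')$. Verdier duality gives you
\[
\mathbb{D}\circ\Res^G_{L\subset P}\;\cong\;{}'\Res^G_{L\subset P}\circ\mathbb{D},
\]
and hence that condition $(1)$ for $\cF$ is equivalent to condition $(1')$ for $\mathbb{D}\cF$. But $\mathbb{D}\cF$ need not be isomorphic to $\cF$, so this does \emph{not} yield $(1)\Leftrightarrow(1')$ for the same $\cF$. Your sentence ``the quantifier ranges over the same set of pairs, so in fact $(1)$ for $\cF$ $\Leftrightarrow$ $(1')$ for $\cF$'' is a non sequitur: the quantifier is over $(L,P)$, not over the simple object, and nothing you have said rules out a simple $\cF$ satisfying $(1)$ but not $(1')$ while $\mathbb{D}\cF$ satisfies $(1')$ but not $(1)$. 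Indeed, the fact that cuspidality is preserved by Verdier duality is recorded in the paper as a \emph{consequence} of the proposition (Remark~\ref{rk:cuspidal}(1)), not an input to its proof.

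The paper closes this gap by a genuinely different and deeper tool: Braden's hyperbolic localization theorem \cite{braden}, which gives an isomorphism of functors
\[
\Res^G_{L\subset P}\;\cong\;{}'\Res^G_{L\subset P'}
\]
on $\Perv_G(\cN_G,\bk)$, where $P'$ is the parabolic opposite to $P$ with the same Levi $L$. This identifies the two restriction functors \emph{without} changing the object $\cF$, so $(1)$ and $(1')$ become the same condition once one lets $P$ range over all proper parabolics. Your duality approach cannot substitute for this.
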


\begin{proof}
It is clear from adjunction that $(1)$ is equivalent to $(2)$ and that $(1')$ is equivalent to $(2')$. Hence we only have to prove that $(1)$ and $(1')$ are equivalent. Choose a maximal torus $T \subset L$. Let $P' \subset G$ be the parabolic subgroup opposite to $P$, i.e.~the parabolic subgroup whose Lie algebra has $T$-weights opposite to those of $P$. Then by \cite[Theorem 1]{braden}, the functors $\Res^G_{L \subset P}$ and $'\Res^G_{L \subset P'}$ are isomorphic, and similarly for $\Res^G_{L \subset P'}$ and $'\Res^G_{L \subset P}$. (Here we consider our functors as functors from $\Perv_G(\cN_G,\bk)$ to $\Perv_{L}(\cN_{L},\bk)$.) The equivalence of $(1)$ and $(1')$ follows.
\end{proof}

\begin{defn}
\label{def:cuspidal}
\begin{enumerate}
\item
A simple object $\cF$ in $\Perv_G(\cN_G,\bk)$ is called \emph{cuspidal} if it satisfies one of the conditions of Proposition \ref{prop:conditions-cuspidal}.
\item
A pair $(\cO,\mathcal{E})$, where $\cO \subset \cN_G$ is a $G$-orbit and $\mathcal{E}$ is an irreducible $G$-equivariant $\bk$-local system on $\cO$, is called \emph{cuspidal} if the perverse sheaf $\IC(\cO,\mathcal{E})$ is cuspidal.
\end{enumerate}
\end{defn}

\begin{rmk}
\label{rk:cuspidal}
\begin{enumerate}
\item
From the equivalence of conditions $(1)$ and $(1')$ in Proposition \ref{prop:conditions-cuspidal} we deduce in particular that a simple perverse sheaf is cuspidal if and only if its Verdier dual is cuspidal.
\item
One can easily check from the proof of Proposition \ref{prop:conditions-cuspidal} that in conditions $(2)$ and $(2')$ one can equivalently require $\cG$ to be simple.
\item
Mimicking terminology in other contexts, it is natural to call \emph{supercuspidal} a simple perverse sheaf which does not appear as a composition factor of any perverse sheaf of the form $\Ind_{L \subset P}^G(\cG)$ with $P \subsetneq G$.  (Note that in this condition again one can equivalently require $\cG$ to be simple.) Then of course supercuspidal implies cuspidal. If $\bk$ has characteristic zero, the decomposition theorem implies that any perverse sheaf $\Ind_{L \subset P}^G(\cG)$ with $\cG$ simple is semisimple. Hence, in this case any cuspidal simple perverse sheaf is supercuspidal. We will see in Remark~\ref{rmk:consequences-thm} that this is false when $\bk$ has positive characteristic.
\end{enumerate}
\end{rmk}

\subsection{Lusztig's original definition of cuspidal pairs}

The above definition of cuspidal perverse sheaves on the nilpotent cone follows~\cite{lusztig-fourier} (see also \cite{mirkovic}). 
Lusztig's original definition of cuspidal pairs~\cite{lusztig} was in the setting of $\Qlb$-sheaves (in the \'etale sense) on isolated classes in $G$. A direct translation of this definition into our setting would be to call a pair $(\cO,\mathcal{E})$ cuspidal if it satisfies the following condition for any parabolic subgroup $P \subsetneq G$, any Levi factor $L \subset P$, any $L$-orbit $\mathscr{C} \subset \cN_L$ and any $x \in \mathscr{C}$:
\begin{equation}
\label{eqn:cuspidal-lusztig}
\mathsf{H}_c^{\dim(\cO)-\dim(\mathscr{C})} \bigl(p_{L \subset P}^{-1}(x) \cap \cO, \mathcal{E} \bigr) = 0.
\end{equation}
In the $\bk=\Qlb$ case, Lusztig showed using the theory of weights that the two notions of cuspidal pair agree; see the proof of~\cite[23.2(b)]{lusztig-csdg5}. In the present more general setting, we have an implication in one direction only.

\begin{prop} \label{prop:sufficient}
Let $(\cO,\mathcal{E})$ be a pair consisting of a $G$-orbit $\cO \subset \cN_G$ and an irreducible $G$-equivariant $\bk$-local system $\mathcal{E}$ on $\cO$. Let $P \subset G$ be a parabolic subgroup and $L \subset P$ a Levi factor.
If $(\cO,\mathcal{E})$ satisfies condition \eqref{eqn:cuspidal-lusztig} for any $L$-orbit $\mathscr{C}\subset\cN_L$ and any $x\in\mathscr{C}$, then ${}'\Res^G_{L \subset P} \bigl( \IC(\cO,\mathcal{E}) \bigr)=0$.
\end{prop}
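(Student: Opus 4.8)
The plan is to compute the stalks of ${}'\Res^G_{L \subset P}(\IC(\cO,\mathcal{E}))$ at an arbitrary point $x$ of an arbitrary $L$-orbit $\mathscr{C} \subset \cN_L$, and show that condition \eqref{eqn:cuspidal-lusztig} forces all of them to vanish. Since ${}'\Res^G_{L \subset P} = (p_{L \subset P})_! \circ (i_{L \subset P})^*$, the stalk at $x$ is computed by proper base change: if $j_x : p_{L \subset P}^{-1}(x) \hookrightarrow \cN_P$ denotes the inclusion of the fibre, then
\[
\bigl( {}'\Res^G_{L \subset P}(\IC(\cO,\mathcal{E})) \bigr)_x \cong \mathsf{R}\Gamma_c\bigl( p_{L \subset P}^{-1}(x), j_x^*(i_{L \subset P})^* \IC(\cO,\mathcal{E}) \bigr).
\]
So I would reduce to showing that this compactly-supported cohomology vanishes in every degree, for every such $x$. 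Because ${}'\Res$ is exact for the perverse $t$-structures, it suffices actually to control these stalks, since a perverse sheaf on $\cN_L$ whose stalks (and, by an analogous costalk computation or by Verdier duality via Remark~\ref{rk:cuspidal}, whose costalks) all vanish is zero; but in fact a cleaner route is to note that once we know ${}'\Res^G_{L \subset P}(\IC(\cO,\mathcal{E}))$ lies in $\Perv_L(\cN_L,\bk)$, vanishing of all stalks already implies it is zero.

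The key step is to unwind $(i_{L \subset P})^*\IC(\cO,\mathcal{E})$ along the fibre. Write $i = i_{L \subset P}$ and $p = p_{L \subset P}$. Since $i$ is a locally closed immersion (the inclusion of $\cN_P$ into $\cN_G$, landing inside $\overline{\cO}$ only partially), $i^*\IC(\cO,\mathcal{E})$ is supported on $i^{-1}(\overline{\cO}) = \cN_P \cap \overline{\cO}$, and on the open part $\cN_P \cap \cO$ it restricts to (a shift of) the pullback of $\mathcal{E}$. The stalk cohomology of $\IC(\cO,\mathcal{E})$ at points of $\overline{\cO} \setminus \cO$ is concentrated in degrees bounded below by a strict inequality coming from the support condition defining the IC sheaf. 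Filtering $p^{-1}(x)$ by its intersections with the $G$-orbits $\cO'$ with $\cO' \cap \cN_P \neq \emptyset$, one gets a spectral sequence (or iterated long exact sequences) converging to $\mathsf{R}\Gamma_c(p^{-1}(x), j_x^* i^* \IC(\cO,\mathcal{E}))$ whose $E_1$-terms are $\mathsf{H}_c^\bullet(p^{-1}(x) \cap \cO', j_x^* i^* \IC(\cO,\mathcal{E})|_{\cO'})$. For $\cO' = \cO$ the hypothesis \eqref{eqn:cuspidal-lusztig} kills the single relevant degree, once one checks the shift: $\IC(\cO,\mathcal{E})$ restricted to $\cO$ is $\mathcal{E}[\dim \cO]$, and $p^{-1}(x) \cap \cO$ is known (by a standard dimension estimate, going back to Lusztig, for the fibres of the map $p$ on nilpotent cones) to have dimension at most $\tfrac{1}{2}(\dim\cO - \dim\mathscr{C})$, so $\mathsf{H}_c^k(p^{-1}(x)\cap\cO, \mathcal{E})$ vanishes for $k > \dim\cO - \dim\mathscr{C}$, and the only possibly-nonzero contribution in the relevant total degree is exactly $\mathsf{H}_c^{\dim\cO - \dim\mathscr{C}}$, which \eqref{eqn:cuspidal-lusztig} assumes is zero. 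For $\cO' \subsetneq \overline{\cO}$, the strict support inequality for $\IC$ combined with the same fibre-dimension estimate should push the contributions out of the range that could survive to the relevant total degrees; this is the usual semicontinuity argument underlying all cuspidality computations.

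The main obstacle I expect is precisely this last bookkeeping with the smaller orbits $\cO' \subsetneq \overline{\cO}$: one must verify that the combination of (a) the bound $\dim \mathcal{H}^k(\IC(\cO,\mathcal{E}))_y < \dim\cO - \dim\cO'$-type support conditions at $y \in \cO'$, and (b) the bound $\dim(p^{-1}(x) \cap \cO') \leq \tfrac12(\dim\cO' - \dim\mathscr{C})$, together with the relevant normalizations, forces $\mathsf{H}_c^m(p^{-1}(x)\cap\cO', \cdot) = 0$ for all $m$ in the total degree range that contributes to $\mathsf{R}\Gamma_c(p^{-1}(x), j_x^* i^* \IC(\cO,\mathcal{E}))$. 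Getting the constants and shifts to line up is where Lusztig's original argument does the real work; in the modular setting nothing changes in this part since it is purely a dimension-count and the IC support conditions hold over any field. (Note we only claim the implication in this direction — the converse would need weights, which are unavailable here, as the paragraph before the proposition explains.) Once the $E_1$-page vanishes in the right range, the spectral sequence gives the vanishing of all stalks of ${}'\Res^G_{L \subset P}(\IC(\cO,\mathcal{E}))$, and since this object is perverse, it is zero.
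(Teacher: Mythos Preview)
Your approach is correct and essentially the same as the paper's: both reduce (via perversity) to showing $\mathsf{H}_c^{-\dim\mathscr{C}}\bigl(p_{L\subset P}^{-1}(x) \cap \overline{\cO},\, \IC(\cO,\cE)\bigr) = 0$ for every $L$-orbit $\mathscr{C}$ and $x\in\mathscr{C}$, then stratify $\overline{\cO}$ by $G$-orbits and combine Lusztig's bound $\dim\bigl(p_{L\subset P}^{-1}(x) \cap \cO'\bigr) \le \tfrac{1}{2}(\dim\cO' - \dim\mathscr{C})$ with the IC support condition to kill the boundary-orbit contributions in that degree, leaving only the term from $\cO$ itself, which is zero by hypothesis \eqref{eqn:cuspidal-lusztig}. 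The paper packages this as a single four-term exact sequence coming from the open--closed decomposition $\overline{\cO} = \cO \sqcup (\overline{\cO} \smallsetminus \cO)$ rather than your full orbit-by-orbit spectral sequence, but the content is identical.

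One point worth tightening in your write-up: your opening paragraph proposes to show the stalk of ${}'\Res^G_{L\subset P}(\IC(\cO,\cE))$ at $x$ vanishes in \emph{every} degree, which is neither true in general nor needed. Perversity means only the single degree $-\dim\mathscr{C}$ matters (the paper states this reduction explicitly at the outset), and indeed your actual computation in the later paragraphs correctly targets only that degree.
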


\begin{proof}
Since ${}'\Res^G_{L \subset P} \bigl( \IC(\cO,\mathcal{E}) \bigr)$ belongs to $\Perv_{L}(\cN_{L},\bk)$, it vanishes if and only if for any $L$-orbit $\mathscr{C} \subset \cN_L$ and any $x \in \mathscr{C}$ we have
\begin{equation}
\mathsf{H}^{-\dim(\mathscr{C})} \Bigl( {}'\Res^G_{L \subset P} \bigl( \IC(\cO,\mathcal{E}) \bigr)_x \Bigr)=0. 
\end{equation}
By definition of ${}'\Res^G_{L \subset P}$, this condition can be rewritten as
\begin{equation}
\mathsf{H}_c^{-\dim(\mathscr{C})} \bigl( p_{L \subset P}^{-1}(x) \cap \overline{\cO}, \IC(\cO,\mathcal{E}) \bigr)=0.
\end{equation}
Now the decomposition $\overline{\cO}=\cO \sqcup (\overline{\cO} \smallsetminus \cO)$ gives us an exact sequence
\begin{equation} \label{eqn:exact-seq}
\begin{split}
&\mathsf{H}_c^{-\dim(\mathscr{C})-1} \bigl( p_{L \subset P}^{-1}(x) \cap (\overline{\cO} \smallsetminus \cO), \IC(\cO,\mathcal{E}) \bigr) \\
&\quad\to \mathsf{H}_c^{\dim(\cO)-\dim(\mathscr{C})} \bigl( p_{L \subset P}^{-1}(x) \cap \cO, \mathcal{E} \bigr) \\
&\qquad\to \mathsf{H}_c^{-\dim(\mathscr{C})} \bigl( p_{L \subset P}^{-1}(x) \cap \overline{\cO}, \IC(\cO,\mathcal{E}) \bigr) \\
&\qquad\quad\to \mathsf{H}_c^{-\dim(\mathscr{C})} \bigl( p_{L \subset P}^{-1}(x) \cap (\overline{\cO} \smallsetminus \cO), \IC(\cO,\mathcal{E}) \bigr).
\end{split}
\end{equation}
We claim that the fourth term in this exact sequence is zero. 
To prove this, it suffices to prove that for any $G$-orbit $\cO' \subset \overline{\cO} \smallsetminus \cO$ we have
\begin{equation} \label{eqn:vanishing}
\mathsf{H}_c^{-\dim(\mathscr{C})} \bigl( p_{L \subset P}^{-1}(x) \cap \cO', \IC(\cO,\mathcal{E}) \bigr) = 0.
\end{equation}
However, by definition of the $\IC$ sheaf, $\IC(\cO,\mathcal{E})_{|\cO'}$ is concentrated in degrees $\leq -\dim(\cO')-1$. 
So~\eqref{eqn:vanishing} follows from the dimension estimate
\begin{equation}
\dim(p_{L \subset P}^{-1}(x) \cap \cO') \leq \frac{1}{2} \bigl( \dim(\cO')-\dim(\mathscr{C}) \bigr),
\end{equation}
proved by Lusztig in~\cite[Proposition 2.2]{lusztig}.
Consequently, we have a surjection
\begin{equation}
\mathsf{H}_c^{\dim(\cO)-\dim(\mathscr{C})} \bigl( p_{L \subset P}^{-1}(x) \cap \cO, \mathcal{E} \bigr) \twoheadrightarrow \mathsf{H}_c^{-\dim(\mathscr{C})} \bigl( p_{L \subset P}^{-1}(x) \cap \overline{\cO}, \IC(\cO,\mathcal{E}) \bigr),
\end{equation}
which implies the result.
\end{proof}

Lusztig classified the cuspidal pairs in the $\bk=\Qlb$ case in~\cite{lusztig}. As observed in~\cite[\S 2.2]{lusztig-cusp1}, when the group $G$ is defined over $\C$, the classification is unchanged if one considers nilpotent orbits rather than unipotent classes. Moreover, by general results, it makes no difference to consider $\Qlb$-sheaves for the strong topology rather than the \'etale topology, or to replace $\Qlb$ with another field $\bk$ of characteristic $0$ over which all the irreducible representations of the finite groups $A_G(x)$ for $x\in\cN_G$ are defined. Hence we can consider the classification of cuspidal pairs, as defined in Definition~\ref{def:cuspidal}, to be known when $\bk$ is such a field. As we will see, the classification when $\bk$ has characteristic $\ell>0$ is quite different.

\begin{rmk}
\label{rk:cuspidal-GL2}
Consider the case $G=\GL(2)$ when $\bk$ has characteristic $2$. We will see in Proposition \ref{prop:constant-cuspidal} below that the pair $(\cO_{(2)},\ubk)$ is cuspidal, where $\cO_{(2)}=\cN_G\setminus\{0\}$. For $B$ a Borel subgroup and $T$ a maximal torus we have
$p_{T \subset B}^{-1}(0) \cap \cO_{(2)} \cong \C^\times$.
In particular $\mathsf{H}_c^2(p_{T \subset B}^{-1}(0) \cap \cO_{(2)},\underline{\bk}) \neq 0$, so this cuspidal pair does not satisfy condition \eqref{eqn:cuspidal-lusztig}. Note that the first term of the exact sequence~\eqref{eqn:exact-seq} is nonzero in this case: in other words, the stalk of $\IC(\cO_{(2)},\ubk)$ at $0$ has nonzero $\mathsf{H}^{-1}$. This example shows that cuspidal perverse sheaves in positive characteristic need not be clean, in contrast with the case of characteristic $0$. Moreover, they need not satisfy parity vanishing.
\end{rmk}

\subsection{Transitivity}

Take $L\subset P\subset G$ as above. Suppose that $Q$ is a parabolic subgroup of $G$ containing $P$, with Levi factor $M\subset Q$ containing $L$. Then $L$ is also a Levi factor of the parabolic subgroup $P\cap M$ of $M$. We have the following well-known result expressing the transitivity of restriction and induction.

\begin{lem} \label{lem:transitivity}
With notation as above, we have isomorphisms of functors
\[
\begin{split}
\Res^G_{L\subset P}&\cong \Res^M_{L\subset P\cap M}\circ \Res^G_{M\subset Q}:\Db_G(\cN_G,\bk) \to \Db_L(\cN_L,\bk),\\
{}'\Res^G_{L\subset P}&\cong {}'\Res^M_{L\subset P\cap M}\circ {}'\Res^G_{M\subset Q}:\Db_G(\cN_G,\bk) \to \Db_L(\cN_L,\bk),\\
\Ind^G_{L\subset P}&\cong \Ind^G_{M\subset Q}\circ \Ind^M_{L\subset P\cap M}:\Db_L(\cN_L,\bk) \to \Db_G(\cN_G,\bk).
\end{split}
\]
\end{lem}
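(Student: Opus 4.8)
The plan is to verify the three isomorphisms by unwinding the definitions of the functors in terms of the six operations, and reducing everything to base change and composition identities for the maps in the diagrams~\eqref{eqn:diagram-restriction-N} associated with the various parabolic subgroups. First I would fix notation for all the relevant inclusions and projections: for $L \subset P \subset G$ we have $p_{L\subset P}\colon \cN_P \to \cN_L$ and $i_{L\subset P}\colon \cN_P \hookrightarrow \cN_G$; for $M\subset Q\subset G$ we have $p_{M\subset Q}\colon \cN_Q\to\cN_M$ and $i_{M\subset Q}\colon \cN_Q\hookrightarrow\cN_G$; and for $L\subset P\cap M\subset M$ we have $p_{L\subset P\cap M}\colon\cN_{P\cap M}\to\cN_L$ and $i_{L\subset P\cap M}\colon\cN_{P\cap M}\hookrightarrow\cN_M$. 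The key geometric observation is that $\cN_P = \cN_{P\cap Q}$ sits inside $\cN_Q$ via a closed embedding (since $P\subset Q$ means $P = P\cap Q$), and the preimage $p_{M\subset Q}^{-1}(\cN_{P\cap M})$ is exactly $\cN_P$; this gives a Cartesian square relating $\cN_P$, $\cN_Q$, $\cN_{P\cap M}$, $\cN_M$ whose horizontal maps are the $p$-projections and whose vertical maps are the $i$-inclusions. Likewise $i_{M\subset Q}\circ(\text{inclusion }\cN_P\hookrightarrow\cN_Q) = i_{L\subset P}$ as maps to $\cN_G$, and $p_{L\subset P\cap M}\circ(\text{restriction of }p_{M\subset Q}\text{ to }\cN_P) = p_{L\subset P}$ as maps to $\cN_L$. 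These two factorizations are exactly the decompositions of the maps in the diagram for $L\subset P\subset G$ through the intermediate data for $M\subset Q$.

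Granting the Cartesian square and the two compositional factorizations, the identity for $\Res$ follows from: $(i_{L\subset P})^! = (\text{inclusion})^!\circ(i_{M\subset Q})^!$ by functoriality of $(-)^!$ applied to the factorization $i_{L\subset P} = i_{M\subset Q}\circ(\text{inclusion})$; then proper (indeed, the maps are projections from nilpotent cones of parabolics, and one has the relevant properness as used already to define the functors) base change across the Cartesian square to commute $(\text{inclusion})^!$ past $(p_{M\subset Q})_*$, turning it into $(p_{L\subset P\cap M})^!$-free form; and finally $(p_{L\subset P})_* = (p_{L\subset P\cap M})_*\circ(\text{restriction of }p_{M\subset Q})_*$ by functoriality of pushforward. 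The identity for ${}'\Res$ is strictly dual: use $(i_{L\subset P})^* = (\text{inclusion})^*\circ(i_{M\subset Q})^*$, the $!$-base change (or $*$-base change, whichever applies to the Cartesian square with the relevant maps) to commute $(\text{inclusion})^*$ past $(p_{M\subset Q})_!$, and $(p_{L\subset P})_! = (p_{L\subset P\cap M})_!\circ(\text{restriction})_!$. The identity for $\Ind$ then follows either by the same direct manipulation — using $(i_{L\subset P})_! = (i_{M\subset Q})_!\circ(\text{inclusion})_!$, the appropriate base change, $(p_{L\subset P})^* = (\text{restriction})^*\circ(p_{L\subset P\cap M})^*$, and transitivity of the equivariantification functors $\gamma^G_Q\circ\gamma^M_{P\cap M}\cong\gamma^G_P$ — or, more cheaply, by adjunction: $\Ind^G_{L\subset P}$ is left adjoint to $\Res^G_{L\subset P}$, and a composite of left adjoints is the left adjoint of the composite, so the $\Ind$ statement is formally equivalent to the $\Res$ statement once one knows $\Ind^G_{M\subset Q}\circ\Ind^M_{L\subset P\cap M}$ is left adjoint to $\Res^M_{L\subset P\cap M}\circ\Res^G_{M\subset Q}$.

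The main obstacle is purely bookkeeping: checking carefully that the square formed by $\cN_P$, $\cN_Q$, $\cN_{P\cap M}$, $\cN_M$ is genuinely Cartesian (i.e.\ that $\cN_P = \cN_Q\times_{\cN_M}\cN_{P\cap M}$ as schemes, not just set-theoretically), and that the properness hypotheses needed for base change and for the two forms of the functors hold for the restricted maps. This amounts to the standard fact that for $P\subset Q$ one has $P = (P\cap M)\cdot U_Q$ with $U_{P} = U_{P\cap M}\cdot U_Q$ — equivalently, at the level of Lie algebras, $\fp = (\fp\cap\fm)\oplus\fu_Q$ and the projection $\fp\to\fm$ restricts to the projection $\fp\cap\fm\to\fl$ — so the vertical map $\cN_Q\to\cN_M$ pulls back along $\cN_{P\cap M}\hookrightarrow\cN_M$ precisely to $\cN_P$. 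I would either cite this as well known (it is the standard transitivity of parabolic induction/restriction, appearing e.g.\ in the references already invoked for the construction of the functors) or spell out the one-line Lie-algebra computation; in any case no delicate argument is required beyond assembling these pieces and invoking functoriality and (proper) base change.
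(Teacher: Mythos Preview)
Your proposal is correct and is essentially the same approach as the paper's own proof: the paper simply says these isomorphisms ``are deduced from the basic base change and composition isomorphisms by standard arguments'' and cites \cite[(7.1), (7.8)]{ahr} for the relevant diagrams in the equivariant derived category setting, which is exactly the Cartesian-square-plus-factorization argument you have spelled out. Your extra remark that the $\Ind$ statement can alternatively be obtained from the $\Res$ statement by adjunction is a valid shortcut not mentioned in the paper, but the underlying method is the same.
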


\begin{proof}
These are deduced from the basic base change and composition isomorphisms by standard arguments. In the present setting of equivariant derived categories, diagrams expressing these arguments may be found in~\cite[(7.1)]{ahr} (for $\Res$, and equivalently for ${}'\Res$) and~\cite[(7.8)]{ahr} (for $\Ind$). 
\end{proof}

\begin{cor} \label{cor:series}
Every simple object $\cF$ of $\Perv_G(\cN_G,\bk)$ appears in the head of $\Ind^G_{L\subset P}(\cG)$ for some parabolic subgroup $P$ of $G$, Levi factor $L\subset P$, and cuspidal simple perverse sheaf $\cG\in\Perv_L(\cN_L,\bk)$.
\end{cor}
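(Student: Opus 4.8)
The plan is to prove this by induction on $\dim G$, using only Proposition~\ref{prop:conditions-cuspidal} (in the reformulation of Remark~\ref{rk:cuspidal}(2)), the transitivity isomorphisms of Lemma~\ref{lem:transitivity}, and the exactness of the induction functors on perverse sheaves. If $\cF$ is itself cuspidal there is nothing to do: take $P = L = G$ and $\cG = \cF$, since $\Ind^G_{G \subset G} = \id$. In particular this settles the base of the induction (for example when $G$ is a torus, which has no proper parabolic subgroup, so every simple perverse sheaf on $\cN_G$ is vacuously cuspidal).

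Now suppose $\cF$ is not cuspidal. By Definition~\ref{def:cuspidal}, condition~(2) of Proposition~\ref{prop:conditions-cuspidal} fails, and by Remark~\ref{rk:cuspidal}(2) we may choose a proper parabolic $P \subsetneq G$, a Levi factor $L \subset P$, and a \emph{simple} object $\cG' \in \Perv_L(\cN_L,\bk)$ such that $\cF$ occurs in the head of $\Ind^G_{L \subset P}(\cG')$; equivalently there is an epimorphism $\Ind^G_{L \subset P}(\cG') \twoheadrightarrow \cF$. Since $P \subsetneq G$ we have $\dim L < \dim G$, so the induction hypothesis applies to the simple perverse sheaf $\cG'$ on $\cN_L$: there are a parabolic $Q$ of $L$, a Levi factor $L' \subset Q$, and a cuspidal simple $\cG \in \Perv_{L'}(\cN_{L'},\bk)$ together with an epimorphism $\Ind^L_{L' \subset Q}(\cG) \twoheadrightarrow \cG'$.

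It remains to package the composite induction and check it is again of the standard shape. Let $\widetilde P \subseteq P$ be the preimage of $Q$ under the quotient $P \twoheadrightarrow P/U_P \cong L$; one checks that this is a parabolic subgroup of $G$ with $\widetilde P \cap L = Q$ and admitting $L'$ as a Levi factor. Lemma~\ref{lem:transitivity}, applied with $M = L$ and the larger parabolic taken to be $P$, then gives $\Ind^G_{L' \subset \widetilde P} \cong \Ind^G_{L \subset P} \circ \Ind^L_{L' \subset Q}$. Since $\Ind^G_{L \subset P}$ is exact on perverse sheaves, applying it to $\Ind^L_{L' \subset Q}(\cG) \twoheadrightarrow \cG'$ and composing with $\Ind^G_{L \subset P}(\cG') \twoheadrightarrow \cF$ produces an epimorphism $\Ind^G_{L' \subset \widetilde P}(\cG) \twoheadrightarrow \cF$, so $\cF$ occurs in the head of $\Ind^G_{L' \subset \widetilde P}(\cG)$ with $\cG$ cuspidal simple, completing the induction.

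The argument is otherwise entirely formal; the only place requiring genuine care is the verification in the last paragraph that $\widetilde P$ is indeed a parabolic subgroup of $G$ with $L'$ as Levi factor and that the hypotheses of Lemma~\ref{lem:transitivity} are met, so that the composite of two standard induction functors is a single standard induction functor $\Ind^G_{L' \subset \widetilde P}$. This is a standard fact about preimages of parabolic subgroups of a Levi quotient, but it is the one step one should not skip.
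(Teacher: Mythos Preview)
Your proof is correct and is essentially the same as the paper's: both argue by induction (you on $\dim G$, the paper on semisimple rank), reduce the non-cuspidal case via condition~(2) of Proposition~\ref{prop:conditions-cuspidal} and Remark~\ref{rk:cuspidal}(2) to a simple object on a proper Levi, apply the induction hypothesis there, and then use transitivity (Lemma~\ref{lem:transitivity}) together with exactness of induction to conclude. Your $\widetilde P$ is exactly the paper's $P = P' U_Q$ (in their notation), so the only differences are notational and the choice of induction variable.
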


\begin{proof}
We use induction on the semisimple rank of $G$. If $\cF$ is cuspidal, the conclusion holds with $L=P=G$ and $\cG=\cF$. In the base case of the induction, when $G$ is a torus, the unique simple object of $\Perv_G(\cN_G,\bk)$ is certainly cuspidal (since there are no proper parabolic subgroups). So we may assume that $G$ is not a torus and that $\cF$ is not cuspidal. By condition $(2)$ in Proposition~\ref{prop:conditions-cuspidal} and Remark \ref{rk:cuspidal}(2), there is some parabolic subgroup $Q \subsetneq G$, Levi factor $M\subset Q$, and simple object $\cH$ of $\Perv_{M}(\cN_{M},\bk)$ such that $\Ind_{M \subset Q}^G(\cH)$ surjects to $\cF$. By the induction hypothesis applied to $M$, there is some parabolic subgroup $P'\subset M$, Levi factor $L\subset P'$, and cuspidal simple perverse sheaf $\cG\in\Perv_{L}(\cN_{L},\bk)$ such that $\Ind_{L\subset P'}^M(\cG)$ surjects to $\cH$. Let $P=P'U_Q$ where $U_Q$ is the unipotent radical of $Q$; then $P$ is a parabolic subgroup of $G$ with Levi factor $L$ such that $P'=P\cap M$. Using the exactness of $\Ind_{M\subset Q}^G$ and Lemma~\ref{lem:transitivity}, we conclude that $\Ind_{L\subset P}^G(\cG)$ surjects to $\cF$ as required.   
\end{proof}

\begin{rmk}
A crucial point in Lusztig's proof of the generalized Springer correspondence in the $\bk=\Qlb$ case is that the pair $(L,\cG)$ in Corollary~\ref{cor:series} is uniquely determined up to $G$-conjugacy by $\cF$; thus, the simple objects of $\Perv_G(\cN_G,\bk)$ are partitioned into `induction series' indexed by the cuspidal pairs of Levi subgroups of $G$. In Theorem~\ref{thm:main} below, we will show that this remains true in the modular setting for $G=\GL(n)$.
\end{rmk}

\subsection{Fourier--Sato transform}
Let us fix a non-degenerate $G$-invariant symmetric bilinear form on the Lie algebra $\fg$ of $G$, and use it to identify $\fg$ and its dual. As in~\cite[\S 2.7]{ahjr}, we define the Fourier--Sato transform $\bT_{\fg}=\check q_!\circ q^* [\dim(\fg)]$, where $q,\check q:Q\to\fg$ are the first and second projections from a certain $G$-stable closed subset $Q\subset\fg\times\fg$. We regard this functor as an autoequivalence
\[
\bT_{\fg} : \Perv_G^{\mathrm{con}}(\fg,\bk) \simto \Perv_G^{\mathrm{con}}(\fg,\bk),
\]
where $\Perv_G^{\mathrm{con}}(\fg,\bk)$ is the full subcategory of $\Perv_G(\fg,\bk)$ consisting of conic objects. Notice that, for any $\cF\in\Perv_G(\cN_G,\bk)$, $(a_G)_!\cF$ belongs to $\Perv_G^{\mathrm{con}}(\fg,\bk)$, where $a_G:\cN_G \hookrightarrow \fg$ is the inclusion.

Let $P \subset G$ be a parabolic subgroup, and let $L \subset P$ be a Levi factor. Let $\fp$ and $\fl$ be the Lie algebras of $P$ and $L$, respectively. Then, instead of diagram \eqref{eqn:diagram-restriction-N}, one can consider the diagram
\begin{equation}
\label{eqn:diagram-restriction-g}
\xymatrix@C=1.5cm{
\fl & \fp \ar[r]^-{j_{L \subset P}} \ar[l]_-{q_{L \subset P}} & \fg,
}
\end{equation}
and set
\[
\begin{split}
{}'\underline{\Res}^G_{L \subset P} &:= (q_{L \subset P})_! \circ (j_{L \subset P})^* : \Db_G(\fg,\bk) \to \Db_L(\fl,\bk),\\
\underline{\Ind}_{L \subset P}^G &:= \gamma_P^G \circ (j_{L \subset P})_! \circ (q_{L \subset P})^* : \Db_{L}(\fl,\bk) \to \Db_G(\fg,\bk).
\end{split}
\]
Using the base change theorem one can easily construct isomorphisms of functors
\begin{equation}
\label{eqn:restrictions}
\begin{split}
{}'\underline{\Res}^G_{L\subset P} \circ (a_G)_! &\cong (a_L)_! \circ {}'\Res^G_{L\subset P},\\
\underline{\Ind}^G_{L\subset P} \circ (a_L)_! &\cong (a_G)_! \circ \Ind^G_{L\subset P}.
\end{split}
\end{equation}

The identification $\fg^* \cong \fg$ considered above induces an identification of subspaces $(\fg/\fu_P)^* \cong \fp$ (where $\fu_P$ is the nilpotent radical of $\fp$), and of their quotients $\fl^* \cong \fl$. In particular, one can then consider the Fourier--Sato transform $\bT_{\fl}$. The following lemma is easily checked, cf.~\cite[Lemma 4.2]{mirkovic}.

\begin{lem}
\label{lem:restriction-Fourier}
There are isomorphisms of functors
\[
{}'\underline{\Res}^G_{L \subset P} \circ \bT_{\fg} \cong \bT_{\fl} \circ {}'\underline{\Res}^G_{L \subset P}, \qquad \underline{\Ind}^G_{L \subset P} \circ \bT_{\fl} \cong \bT_{\fg} \circ \underline{\Ind}^G_{L \subset P}.
\]
\end{lem}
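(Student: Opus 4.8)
The statement to prove is Lemma~\ref{lem:restriction-Fourier}: the two intertwining isomorphisms
\[
{}'\uRes^G_{L \subset P} \circ \bT_{\fg} \cong \bT_{\fl} \circ {}'\uRes^G_{L \subset P}, \qquad \uInd^G_{L \subset P} \circ \bT_{\fl} \cong \bT_{\fg} \circ \uInd^G_{L \subset P}.
\]

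\textbf{Overall strategy.} The plan is to prove the first isomorphism by unwinding both sides into explicit compositions of pullbacks, proper (or !-)pushforwards, and shifts along a diagram of vector spaces, and then to match them using the base change theorem together with the compatibility of the Fourier--Sato kernel with linear maps. The second isomorphism will then follow by adjunction (both $\uInd$ and $\bT$ have well-understood adjoints, and $\bT$ is an autoequivalence), or alternatively by a dual diagram chase. So the real content is the first isomorphism, and I would concentrate on that.

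\textbf{Setting up the diagram.} Recall ${}'\uRes^G_{L\subset P} = (q_{L\subset P})_! \circ (j_{L\subset P})^*$ where $j_{L\subset P}\colon \fp \hookrightarrow \fg$ is the inclusion and $q_{L\subset P}\colon \fp \twoheadrightarrow \fl$ is the projection (quotient by $\fu_P$). The Fourier--Sato transform $\bT_{\fg} = \check q_!\circ q^*[\dim\fg]$ is defined via the correspondence $Q\subset \fg\times\fg$, and similarly $\bT_{\fl}$ via a correspondence in $\fl\times\fl$, using the compatible identifications $\fg^*\cong\fg$, $(\fg/\fu_P)^*\cong\fp$, $\fl^*\cong\fl$ fixed in the text. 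The key geometric observation is that the linear maps $j_{L\subset P}$ (a closed embedding) and $q_{L\subset P}$ (a surjection) are, up to these identifications, dual to each other: $q_{L\subset P}^* \colon \fl^* \to \fp$ is identified with $j_{L\subset P}\colon \fp\hookrightarrow\fg$ composed with nothing, and $j_{L\subset P}^*\colon \fg^* \to \fl^*$ corresponds to $q_{L\subset P}$. I would therefore form the big commutative diagram relating the correspondence $Q_\fg$ for $\fg$ and the correspondence $Q_\fl$ for $\fl$, pulled back along $\fp$ on both the $q$-side and the $\check q$-side, so that every square in sight is Cartesian. Then both ${}'\uRes^G_{L\subset P}\circ\bT_{\fg}$ and $\bT_{\fl}\circ{}'\uRes^G_{L\subset P}$ become (up to the shift $[\dim\fg]$ versus $[\dim\fl]$, which must be reconciled using $\dim\fg - \dim\fl = \dim\fp - \dim\fl + \dim\fu_P = 2\dim\fu_P$ or the appropriate codimension count, and the shift built into the conic normalization) the pushforward along one side of this diagram of the pullback along the other. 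The isomorphism then drops out of proper base change applied square-by-square, exactly as in the cited \cite[Lemma 4.2]{mirkovic}. Since all maps are $G$-equivariant and $\gamma_P^G$ is absent from ${}'\uRes$ (it only enters $\uInd$), the equivariant bookkeeping via \cite{bl} is routine.

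\textbf{The second isomorphism and the main obstacle.} For $\uInd^G_{L\subset P} = \gamma_P^G\circ (j_{L\subset P})_!\circ(q_{L\subset P})^*$ I expect the cleanest route is adjunction: ${}'\uRes$ and a suitable $\uInd$-type functor are adjoint (mirroring the $\cN$-side adjunctions recalled before Proposition~\ref{prop:conditions-cuspidal}), and $\bT_{\fg}$, $\bT_{\fl}$ are autoequivalences with explicit quasi-inverses (the inverse Fourier--Sato transform), so the first isomorphism formally transposes to the second. The main obstacle, and the step deserving genuine care rather than a wave of the hand, is the compatibility of shifts and of the various identifications $\fg^*\cong\fg$, $(\fg/\fu_P)^*\cong\fp$, $\fl^*\cong\fl$: one must check that the chosen $G$-invariant form on $\fg$ restricts/descends so that the correspondence $Q_\fl$ really is the image/pullback of $Q_\fg$ under the $\fp$-diagram, and that the normalizing shift $[\dim\fg]$ versus $[\dim\fl]$ matches the dimension of the fibres of $q_{L\subset P}$ appearing in the base-change squares. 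Once that bookkeeping is pinned down the rest is a formal diagram chase, which is why the text is content to say the lemma ``is easily checked.''
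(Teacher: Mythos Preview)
Your proposal is correct and follows essentially the same route the paper indicates: the paper gives no detailed proof, merely stating that the lemma ``is easily checked, cf.~\cite[Lemma 4.2]{mirkovic},'' and your diagram-chase via base change using the duality between $j_{L\subset P}$ and $q_{L\subset P}$ is exactly the standard argument behind that reference. Your attention to the shift bookkeeping and the adjunction route to the second isomorphism are both appropriate and match what one would unpack from the cited source.
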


\begin{cor}
\label{cor:restriction-Fourier}
There are isomorphisms of functors
\[
{}'\underline{\Res}^G_{L \subset P} \circ \bT_{\fg} \circ (a_G)_! \cong \bT_{\fl} \circ (a_L)_! \circ {}'\Res^G_{L \subset P}, \qquad \underline{\Ind}^G_{L \subset P} \circ \bT_{\fl} \circ (a_L)_! \cong \bT_{\fg} \circ (a_G)_!\circ \Ind^G_{L \subset P}.
\]
\end{cor}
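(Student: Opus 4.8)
The plan is to deduce Corollary~\ref{cor:restriction-Fourier} by simply composing the isomorphisms already established in Lemma~\ref{lem:restriction-Fourier} with those of~\eqref{eqn:restrictions}. Concretely, for the restriction statement I would start from the left-hand side ${}'\underline{\Res}^G_{L\subset P}\circ\bT_\fg\circ(a_G)_!$, use the first isomorphism of Lemma~\ref{lem:restriction-Fourier} to rewrite $\bT_\fg$ past ${}'\underline{\Res}^G_{L\subset P}$, obtaining $\bT_\fl\circ{}'\underline{\Res}^G_{L\subset P}\circ(a_G)_!$; then apply the first isomorphism of~\eqref{eqn:restrictions} to rewrite ${}'\underline{\Res}^G_{L\subset P}\circ(a_G)_!$ as $(a_L)_!\circ{}'\Res^G_{L\subset P}$, yielding $\bT_\fl\circ(a_L)_!\circ{}'\Res^G_{L\subset P}$, which is exactly the right-hand side. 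The induction statement is entirely parallel: starting from $\underline{\Ind}^G_{L\subset P}\circ\bT_\fl\circ(a_L)_!$, apply the second isomorphism of~\eqref{eqn:restrictions} to replace $\underline{\Ind}^G_{L\subset P}\circ(a_L)_!$ — after first commuting $\bT_\fl$ to the right past nothing, i.e.\ one must be slightly careful about the order — so in fact one first uses the second isomorphism of Lemma~\ref{lem:restriction-Fourier} to move $\bT_\fl$ to the far left, getting $\bT_\fg\circ\underline{\Ind}^G_{L\subset P}\circ(a_L)_!$, and then the second isomorphism of~\eqref{eqn:restrictions} to get $\bT_\fg\circ(a_G)_!\circ\Ind^G_{L\subset P}$.

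There is really no new content here beyond chaining two pairs of known isomorphisms, so the only thing to verify is that these isomorphisms are compatible, i.e.\ that no coherence issue arises from the order in which one performs the substitutions. Since each of the four input isomorphisms is an honest isomorphism of functors, any way of composing them produces a valid isomorphism of the composite functors, and the statement asserts only existence (not naturality in some diagram), so this is automatic. I would therefore present the proof as a short two-line chain of isomorphisms for each of the two claims, citing Lemma~\ref{lem:restriction-Fourier} and~\eqref{eqn:restrictions} at the appropriate steps.

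The one point deserving a sentence of care — and the closest thing to an obstacle — is making sure the domains match: ${}'\underline{\Res}^G_{L\subset P}$ and $\underline{\Ind}^G_{L\subset P}$ are a priori defined on all of $\Db_G(\fg,\bk)$ (resp.\ $\Db_L(\fl,\bk)$), whereas $\bT_\fg$ and $\bT_\fl$ were only introduced as autoequivalences of the conic subcategories $\Perv^{\mathrm{con}}_G(\fg,\bk)$ and $\Perv^{\mathrm{con}}_L(\fl,\bk)$. One should note that $(a_G)_!\cF$ is conic for $\cF\in\Perv_G(\cN_G,\bk)$ (as recalled in the excerpt), that ${}'\underline{\Res}$ and $\underline{\Ind}$ preserve conicity, and that $\bT$ makes sense on conic objects; Lemma~\ref{lem:restriction-Fourier} is implicitly an identity of functors on the conic subcategories, so the composites in the corollary are well-defined on $\Perv_G(\cN_G,\bk)$ and $\Perv_L(\cN_L,\bk)$ respectively. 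Once this bookkeeping is acknowledged, the proof is immediate.

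\begin{proof}
Both isomorphisms follow by combining Lemma~\ref{lem:restriction-Fourier} with~\eqref{eqn:restrictions}; note that all the functors involved are defined on the relevant conic subcategories, since $(a_G)_!$ and $(a_L)_!$ land in $\Perv^{\mathrm{con}}_G(\fg,\bk)$ and $\Perv^{\mathrm{con}}_L(\fl,\bk)$, and these subcategories are preserved by ${}'\underline{\Res}^G_{L\subset P}$ and $\underline{\Ind}^G_{L\subset P}$. For the first isomorphism we have
\[
{}'\underline{\Res}^G_{L\subset P}\circ\bT_\fg\circ(a_G)_!\cong\bT_\fl\circ{}'\underline{\Res}^G_{L\subset P}\circ(a_G)_!\cong\bT_\fl\circ(a_L)_!\circ{}'\Res^G_{L\subset P},
\]
using Lemma~\ref{lem:restriction-Fourier} and then~\eqref{eqn:restrictions}. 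For the second, similarly,
\[
\underline{\Ind}^G_{L\subset P}\circ\bT_\fl\circ(a_L)_!\cong\bT_\fg\circ\underline{\Ind}^G_{L\subset P}\circ(a_L)_!\cong\bT_\fg\circ(a_G)_!\circ\Ind^G_{L\subset P}. \qedhere
\]
\end{proof}
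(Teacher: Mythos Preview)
Your proof is correct and follows exactly the same approach as the paper: the paper's own proof is the single sentence ``Combine Lemma~\ref{lem:restriction-Fourier} with~\eqref{eqn:restrictions}.'' Your additional remarks on conicity are sound but not needed for the argument as stated.
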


\begin{proof}
Combine Lemma~\ref{lem:restriction-Fourier} with~\eqref{eqn:restrictions}.
\end{proof}

The following result was shown by Lusztig for $\bk=\Qlb$~\cite{lusztig-fourier}. The argument below, which works for general $\bk$, is due to Mirkovi{\'c}~\cite{mirkovic}.  

\begin{prop}
\label{prop:Fourier-cuspidal}
Assume $G$ is semisimple. If $\cF$ is a cuspidal simple object of $\Perv_G(\cN_G,\bk)$, then $\bT_\fg ((a_G)_! \cF)$ is supported on $\cN_G$. If $\cF'$ is the unique simple object of $\Perv_G(\cN_G,\bk)$ such that $\bT_\fg ((a_G)_! \cF) = (a_G)_! \cF'$, then $\cF'$ is cuspidal.
\end{prop}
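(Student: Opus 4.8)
The plan is to deduce the result directly from the compatibility of the Fourier--Sato transform with induction and restriction, i.e.\ from Lemma~\ref{lem:restriction-Fourier} and Corollary~\ref{cor:restriction-Fourier}. Concretely, suppose $\cF$ is cuspidal, so that by condition $(1')$ of Proposition~\ref{prop:conditions-cuspidal} we have ${}'\Res^G_{L \subset P}(\cF) = 0$ for every proper parabolic $P \subsetneq G$ and Levi factor $L \subset P$. Applying $(a_L)_!$ and then $\bT_\fl$ we get $\bT_\fl \circ (a_L)_! \circ {}'\Res^G_{L \subset P}(\cF) = 0$, and by Corollary~\ref{cor:restriction-Fourier} this equals ${}'\underline{\Res}^G_{L \subset P} \circ \bT_\fg \circ (a_G)_!(\cF)$. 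So the conic perverse sheaf $\bT_\fg((a_G)_!\cF)$ on $\fg$ has vanishing ${}'\underline{\Res}^G_{L \subset P}$ for all proper $P$.

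The next step is to translate the vanishing of all ${}'\underline{\Res}^G_{L \subset P}$ into a support condition. A conic perverse sheaf on $\fg$ whose $*$-restriction--then--proper-pushforward along $\fp \to \fl$ vanishes for every proper parabolic must be supported on the nilpotent cone: indeed, if the support met the complement of $\cN_G$, one could find a semisimple element $s \neq 0$ in the support, and then choosing $P$ to be a proper parabolic with Levi $L = Z_G(s)^\circ$ (here using that $G$ is semisimple, so that such a proper parabolic exists when $s$ is a nonzero semisimple element), a standard transversality/base-change argument shows ${}'\underline{\Res}^G_{L \subset P}$ of the sheaf would be nonzero near the image of $s$ in $\fl$. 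Since $\bT_\fg((a_G)_!\cF)$ is conic and perverse, by the argument of Mirkovi\'c~\cite{mirkovic} it is of the form $(a_G)_!\cF'$ for a unique simple $\cF' \in \Perv_G(\cN_G,\bk)$ --- this is the first assertion already granted to us in the statement of the proposition.

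It remains to show $\cF'$ is cuspidal, and this is now essentially symmetric. We run the same computation in reverse: for any proper $P$ with Levi $L$, Corollary~\ref{cor:restriction-Fourier} gives ${}'\underline{\Res}^G_{L \subset P} \circ \bT_\fg \circ (a_G)_!(\cF) \cong \bT_\fl \circ (a_L)_! \circ {}'\Res^G_{L \subset P}(\cF) = 0$, i.e.\ ${}'\underline{\Res}^G_{L \subset P}((a_G)_!\cF') = 0$. Applying the inverse Fourier--Sato transform $\bT_\fl^{-1}$ (which, like $\bT_\fl$, intertwines ${}'\underline{\Res}^G_{L \subset P}$ with ${}'\underline{\Res}^G_{L \subset P}$ up to the usual sign/shift conventions, by Lemma~\ref{lem:restriction-Fourier}) and using the first equation of~\eqref{eqn:restrictions}, we recover $(a_L)_! \circ {}'\Res^G_{L \subset P}(\cF') = 0$, hence ${}'\Res^G_{L \subset P}(\cF') = 0$ since $(a_L)_!$ is faithful. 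Thus $\cF'$ satisfies condition $(1')$ of Proposition~\ref{prop:conditions-cuspidal} and is cuspidal.

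The only genuinely delicate point is the support argument in the second paragraph: one must be careful that it is exactly the vanishing of ${}'\underline{\Res}$ for \emph{all} proper parabolics (not for a single one) that forces the support into $\cN_G$, and that semisimplicity of $G$ is used to guarantee that every nonzero semisimple element is contained in a proper Levi arising from a proper parabolic. Since the proposition statement already hands us the fact that $\bT_\fg((a_G)_!\cF)$ is supported on $\cN_G$ and equals $(a_G)_!\cF'$, in the write-up I would cite Mirkovi\'c~\cite{mirkovic} for that half and concentrate the argument on the cuspidality of $\cF'$, which is the clean Fourier-duality computation above; that is the step I expect to present in full detail.
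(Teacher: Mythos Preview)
Your proposal is correct and follows essentially the same route as the paper: compute ${}'\underline{\Res}^G_{L\subset P}(\bT_\fg((a_G)_!\cF))$ via Corollary~\ref{cor:restriction-Fourier}, invoke Mirkovi\'c \cite[Lemma~4.4]{mirkovic} for the support statement, and then read off cuspidality of $\cF'$ from the same vanishing together with~\eqref{eqn:restrictions}. One small simplification: in your third paragraph the appeal to $\bT_\fl^{-1}$ is unnecessary---once you have ${}'\underline{\Res}^G_{L\subset P}((a_G)_!\cF')=0$, the first isomorphism in~\eqref{eqn:restrictions} alone gives $(a_L)_!\,{}'\Res^G_{L\subset P}(\cF')=0$ and hence ${}'\Res^G_{L\subset P}(\cF')=0$, exactly as the paper does.
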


\begin{proof}
We observe that for any parabolic $P \subsetneq G$ and Levi factor $L \subset P$ we have
\[
{}'\underline{\Res}^G_{L \subset P} \bigl(\bT_\fg ((a_G)_! \cF)\bigr) \overset{{\rm Cor.~\ref{cor:restriction-Fourier}}}{\cong} \bT_\fl \bigl((a_L)_! {}'\Res^G_{L \subset P} (\cF)\bigr),
\]
which is $0$ by assumption. Then the first claim follows from \cite[Lemma 4.4]{mirkovic}. The second claim also follows from this observation and isomorphism \eqref{eqn:restrictions}.
\end{proof}

\begin{cor}
\label{cor:Fourier-cuspidal}
Drop the assumption that $G$ is semisimple, and let $\fz_G$ denote the centre of $\fg$. For any cuspidal pair $(\cO,\cE)$, there is a unique cuspidal pair $(\cO',\cE')$ such that 
\[ \bT_{\fg}((a_G)_!\IC(\cO,\cE))\cong\IC(\cO'+\fz_G,\cE'\boxtimes\ubk_{\fz_G}). \] 
\end{cor}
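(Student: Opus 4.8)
The plan is to reduce to the semisimple case already settled in Proposition~\ref{prop:Fourier-cuspidal}. Let $G_{\mathrm{der}}=(G,G)$ be the derived subgroup, which is semisimple, with Lie algebra $\fg_{\mathrm{der}}=[\fg,\fg]$; since our $G$-invariant form pairs $[\fg,\fg]$ trivially with $\fz_G$, we get an orthogonal decomposition $\fg=\fg_{\mathrm{der}}\oplus\fz_G$, nondegenerate on each summand. Nilpotent elements lie in $\fg_{\mathrm{der}}$, so $\cN_G=\cN_{G_{\mathrm{der}}}\subset\fg_{\mathrm{der}}\times\{0\}$; moreover, since $G=Z(G)^\circ G_{\mathrm{der}}$ and $Z(G)^\circ$ acts trivially, the $G$-orbits on $\cN_G$ coincide with the $G_{\mathrm{der}}$-orbits, the component group $A_G(x)$ is a quotient of $A_{G_{\mathrm{der}}}(x)$, and any simple $\IC(\cO,\cE)\in\Perv_G(\cN_G,\bk)$, equipped with the restriction of its equivariant structure, is a simple object of $\Perv_{G_{\mathrm{der}}}(\cN_G,\bk)$ of the same form.

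First I would note that cuspidality is unaffected by passing from $G$ to $G_{\mathrm{der}}$: the proper parabolics of $G_{\mathrm{der}}$ are exactly the $P\cap G_{\mathrm{der}}$ for $P\subsetneq G$ a proper parabolic, and under this correspondence the diagram~\eqref{eqn:diagram-restriction-N} — hence the complex underlying $\Res^G_{L\subset P}(\cF)$ — is literally unchanged. Since a (possibly equivariant) complex vanishes if and only if its underlying complex does, $\cF$ is cuspidal for $G$ if and only if it is cuspidal for $G_{\mathrm{der}}$.

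Next I would use two standard facts about the Fourier--Sato transform: that it is multiplicative along the orthogonal decomposition, $\bT_\fg(A\boxtimes B)\cong\bT_{\fg_{\mathrm{der}}}(A)\boxtimes\bT_{\fz_G}(B)$ for conic $A,B$; and that $\bT_{\fz_G}(\delta_0)\cong\ubk_{\fz_G}[\dim\fz_G]=\IC(\fz_G,\ubk)$, where $\delta_0$ is the skyscraper at the origin. Writing $(a_G)_!\IC(\cO,\cE)\cong\bigl((a_{G_{\mathrm{der}}})_!\IC(\cO,\cE)\bigr)\boxtimes\delta_0$ and applying Proposition~\ref{prop:Fourier-cuspidal} to $G_{\mathrm{der}}$, which gives $\bT_{\fg_{\mathrm{der}}}\bigl((a_{G_{\mathrm{der}}})_!\IC(\cO,\cE)\bigr)\cong(a_{G_{\mathrm{der}}})_!\IC(\cO',\cE')$ for some pair $(\cO',\cE')$ cuspidal for $G_{\mathrm{der}}$, I obtain
\[
\bT_\fg\bigl((a_G)_!\IC(\cO,\cE)\bigr)\cong(a_{G_{\mathrm{der}}})_!\IC(\cO',\cE')\boxtimes\ubk_{\fz_G}[\dim\fz_G].
\]
Identifying $(a_{G_{\mathrm{der}}})_!\IC(\cO',\cE')$ with the intersection cohomology complex of the closed subvariety $\overline{\cO'}\subset\fg_{\mathrm{der}}$ (equipped with $\cE'$) and using that the external product of two $\IC$ sheaves is the $\IC$ sheaf of the product, the right-hand side equals $\IC(\overline{\cO'}\times\fz_G,\cE'\boxtimes\ubk_{\fz_G})=\IC(\cO'+\fz_G,\cE'\boxtimes\ubk_{\fz_G})$.

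Finally I would check that $(\cO',\cE')$ is a cuspidal pair for $G$, and uniqueness. Since $\bT_\fg$ is an autoequivalence of $\Perv_G^{\mathrm{con}}(\fg,\bk)$, the left-hand side above is a simple $G$-equivariant perverse sheaf; hence $\IC(\cO'+\fz_G,\cE'\boxtimes\ubk_{\fz_G})$ carries a $G$-equivariant structure, which is unique since $G$ is connected. Restricting it to $\cO'\times\{0\}$ shows that $\cE'$ is $G$-equivariant (still irreducible), and then $\IC(\cO',\cE')$ is cuspidal for $G$ by the first step. Uniqueness is immediate: the left-hand side determines its support $\overline{\cO'}+\fz_G$, hence $\cO'$, hence the local system $\cE'\boxtimes\ubk_{\fz_G}$ on the open stratum, hence $\cE'$. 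The main obstacle, such as it is, is not conceptual but bookkeeping: one must verify that the two cited properties of $\bT$ hold with the normalization used here, and that the conclusion of Proposition~\ref{prop:Fourier-cuspidal} for $G_{\mathrm{der}}$, a priori only $G_{\mathrm{der}}$-equivariant, genuinely upgrades to a cuspidal pair for $G$ — which is precisely what the last paragraph handles.
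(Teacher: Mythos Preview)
Your proof is correct and follows essentially the same strategy as the paper: reduce to the semisimple case via the orthogonal splitting $\fg=\fg_{\mathrm{der}}\oplus\fz_G$ and the behaviour of Fourier--Sato under this splitting. The only difference is that the paper works with the quotient $G/Z(G)^\circ$ rather than the subgroup $G_{\mathrm{der}}$; since $Z(G)^\circ$ acts trivially on $\cN_G$, the categories $\Perv_G(\cN_G,\bk)$ and $\Perv_{G/Z(G)^\circ}(\cN_G,\bk)$ are literally equal, so the cuspidal pair $(\cO',\cE')$ produced by Proposition~\ref{prop:Fourier-cuspidal} is automatically a cuspidal pair for $G$ with no further argument. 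Your choice of $G_{\mathrm{der}}$ forces the extra (correct) step of upgrading $\cE'$ from $G_{\mathrm{der}}$-equivariance to $G$-equivariance, which the paper's formulation sidesteps.
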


\begin{proof}
The semisimple group $G/Z(G)^\circ$ has Lie algebra $\fg/\fz_G$, which we can identify with the orthogonal complement of $\fz_G$ for our bilinear form on $\fg$ (i.e., the derived subalgebra of $\fg$).  
With this identification, the nilpotent cone and the set of cuspidal pairs for $G/Z(G)^\circ$ are the same as those for $G$. By Proposition~\ref{prop:Fourier-cuspidal}, for any cuspidal pair $(\cO,\cE)$ there is a unique cuspidal pair $(\cO',\cE')$ such that
\[
\bT_{\fg/\fz_G}((a_{G/Z(G)^\circ})_!\IC(\cO,\cE))\cong(a_{G/Z(G)^\circ})_!\IC(\cO',\cE').
\]
Applying~\cite[(2.14)]{ahjr} to the inclusion of $\fg/\fz_G$ in $\fg$, we deduce the isomorphism in the statement.
\end{proof}

\begin{rmk}
\label{rmk:Fourier-cuspidal}
In the $\bk=\Qlb$ case, Lusztig deduced from his classification of cuspidal pairs that one always has $(\cO',\cE')=(\cO,\cE)$ (see \cite[Theorem 5(b)]{lusztig-fourier}). For general $\bk$, the classification of cuspidal pairs is different, and Lusztig's argument applies only in some cases. However, we do not yet know an example where $(\cO',\cE')\neq(\cO,\cE)$. This issue does not arise when $G=\GL(n)$, because we will see in Theorem~\ref{thm:cuspidals-GL-new} that $\GL(n)$ has at most one cuspidal pair.
\end{rmk}

\subsection{Equivalent definitions of induction}

Let $L\subset P\subset G$ be as above. It is sometimes more convenient to use alternative descriptions of the induction functors $\Ind_{L\subset P}^G$ and $\underline{\Ind}^G_{L \subset P}$. In the setting of~\eqref{eqn:diagram-restriction-N} and~\eqref{eqn:diagram-restriction-g}, we factor the inclusions $i_{L\subset P}:\cN_P\hookrightarrow\cN_G$ and $j_{L\subset P}:\fp\hookrightarrow\fg$ as the compositions
\[
\xymatrix@R=2ex@C=1.5cm{
\cN_P \ar@{^{(}->}[r]^-{k_{L \subset P}} & G \times^P \cN_P \ar[r]^-{\mu_{L \subset P}} & \cN_G
\\
\fp \ar@{^{(}->}[r]^-{l_{L \subset P}} & G \times^P \fp \ar[r]^-{\pi_{L \subset P}} & \fg
}
\]
where $\mu_{L\subset P}$ and $\pi_{L\subset P}$ are the morphisms induced by the adjoint action of $G$ on $\fg$.

\begin{lem} \label{lem:other-ind}
There are isomorphisms of functors
\[
\begin{split}
\Ind_{L\subset P}^G&\cong (\mu_{L\subset P})_!\circ\mathsf{Ind}_P^G\circ(p_{L\subset P})^*[2\dim(G/P)],
\\
\underline{\Ind}_{L\subset P}^G&\cong (\pi_{L\subset P})_!\circ\mathsf{Ind}_P^G\circ(q_{L\subset P})^*[2\dim(G/P)],
\end{split}
\]
where $\mathsf{Ind}_P^G:\Db_P(X,\bk)\to\Db_G(G\times^P X,\bk)$ is the induction equivalence of~\cite[\S2.6.3]{bl}, for $X=\cN_P$ or $\fp$.
\end{lem}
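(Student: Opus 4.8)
The plan is to unwind the definitions of $\Ind_{L\subset P}^G$ and $\underline{\Ind}^G_{L\subset P}$ through the factorizations given just before the statement, and to match them with the induction equivalence $\mathsf{Ind}_P^G$ of \cite[\S2.6.3]{bl}. I will treat the case of $\Ind_{L\subset P}^G$; the case of $\underline{\Ind}^G_{L\subset P}$ is identical with $\cN_P$, $\cN_G$, $i_{L\subset P}$, $\mu_{L\subset P}$, $p_{L\subset P}$ replaced by $\fp$, $\fg$, $j_{L\subset P}$, $\pi_{L\subset P}$, $q_{L\subset P}$ throughout. Recall that by definition $\Ind_{L\subset P}^G = \gamma_P^G\circ (i_{L\subset P})_!\circ (p_{L\subset P})^*$, and that $i_{L\subset P}=\mu_{L\subset P}\circ k_{L\subset P}$, so that $(i_{L\subset P})_! \cong (\mu_{L\subset P})_!\circ (k_{L\subset P})_!$. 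Thus it suffices to produce a natural isomorphism
\[
\gamma_P^G\circ (k_{L\subset P})_! \cong (\mu_{L\subset P})_!\circ\mathsf{Ind}_P^G [2\dim(G/P)] \circ (\text{suitable re-bracketing}),
\]
after which the displayed formula follows by composing with $(p_{L\subset P})^*$ and using that $(\mu_{L\subset P})_!$ and a shift commute.

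The key point is the relationship between $\gamma_P^G$, which is left adjoint to the forgetful functor $\For^G_P:\Db_G(\cN_G,\bk)\to\Db_P(\cN_G,\bk)$, and the equivalence $\mathsf{Ind}_P^G:\Db_P(\cN_P,\bk)\simto\Db_G(G\times^P\cN_P,\bk)$. Here $G\times^P\cN_P$ is the quotient of $G\times\cN_P$ by the $P$-action $p\cdot(g,x)=(gp^{-1},p\cdot x)$, and the map $\mu_{L\subset P}:G\times^P\cN_P\to\cN_G$ sends $[g,x]$ to $g\cdot x$. I would use the standard fact (see \cite[\S2.6]{bl}, and compare the parallel discussion around \cite[(7.8)]{ahr}) that for the $G$-variety $\cN_G$, pulling back along the $P$-equivariant inclusion $\cN_G\hookrightarrow G\times^P\cN_G$ and then applying $\mathsf{Ind}_P^G$ recovers $\For^G_P$ up to the equivariant identification $G\times^P\cN_G\cong G/P\times\cN_G$; dually, $\gamma_P^G$ is computed by $\mathsf{Ind}_P^G$ followed by proper pushforward along $G\times^P\cN_G\to\cN_G$, with a shift by $2\dim(G/P)$ arising from the fact that $\gamma_P^G$ involves $\mathbf{a}_!\mathbf{a}^*$ for the fibration $\mathbf{a}$ with fibre $G/P$. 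Concretely, $\gamma_P^G\circ(k_{L\subset P})_!$ should be rewritten by first extending $(k_{L\subset P})_!$ up to $G\times^P\cN_P$ — this is exactly what $\mathsf{Ind}_P^G$ applied after the open-ish inclusion $\cN_P\hookrightarrow G\times^P\cN_P$ does, up to the shift — and then pushing forward along $\mu_{L\subset P}$.

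The main obstacle, and the only real content, is bookkeeping: checking that the shift is precisely $2\dim(G/P)$ and that the various base-change and adjunction isomorphisms used are compatible (i.e.\ that the diagram of functors actually commutes, not merely that both sides are abstractly isomorphic). This is a routine but somewhat delicate diagram chase in the equivariant derived category, of the same flavour as the transitivity isomorphisms of Lemma~\ref{lem:transitivity}; the relevant commuting squares are essentially those recorded in \cite[(7.8)]{ahr}. I would therefore phrase the proof as: unwind $\Ind_{L\subset P}^G$ using $i_{L\subset P}=\mu_{L\subset P}\circ k_{L\subset P}$, identify $\gamma_P^G\circ(k_{L\subset P})_!$ with $(\mu_{L\subset P})_!\circ\mathsf{Ind}_P^G[2\dim(G/P)]$ via the cited properties of $\mathsf{Ind}_P^G$ and $\gamma_P^G$ from \cite{bl}, and note the identical argument for $\underline{\Ind}^G_{L\subset P}$ with $\fg,\fp$ in place of $\cN_G,\cN_P$.
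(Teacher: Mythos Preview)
Your approach is essentially the paper's, and the overall strategy --- factor $i_{L\subset P}=\mu_{L\subset P}\circ k_{L\subset P}$ and then relate $\gamma_P^G$ to $\mathsf{Ind}_P^G$ --- is correct. However, your displayed reduction has a bookkeeping slip: after factoring $(i_{L\subset P})_!\cong(\mu_{L\subset P})_!\circ(k_{L\subset P})_!$, what remains to prove is
\[
\gamma_P^G\circ(\mu_{L\subset P})_!\circ(k_{L\subset P})_!\ \cong\ (\mu_{L\subset P})_!\circ\mathsf{Ind}_P^G[2\dim(G/P)],
\]
not $\gamma_P^G\circ(k_{L\subset P})_!\cong(\mu_{L\subset P})_!\circ\mathsf{Ind}_P^G[2\dim(G/P)]$; as written, your formula does not even type-check (the two sides land in equivariant derived categories over different varieties). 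The paper handles this in two clean steps: first commute $\gamma_P^G$ past the $G$-equivariant pushforward $(\mu_{L\subset P})_!$, a general property of $\gamma_P^G$ recorded in \cite[\S B.10.1]{ahr}, obtaining $(\mu_{L\subset P})_!\circ\gamma_P^G\circ(k_{L\subset P})_!$ with $\gamma_P^G$ now taken on $G\times^P\cN_P$; second, identify $\gamma_P^G\circ(k_{L\subset P})_!\cong\mathsf{Ind}_P^G[2\dim(G/P)]$ directly (no extra $(\mu_{L\subset P})_!$), which is precisely \cite[\S B.17]{ahr}. Your prose description of $\gamma_P^G$ as ``$\mathsf{Ind}_P^G$ followed by proper pushforward along $G\times^P\cN_G\to\cN_G$ with a shift'' amounts to the same two ingredients combined from the other side, and would also go through once the source and target categories are kept straight.
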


\begin{proof}
The first isomorphism follows from:
\[
\begin{split}
\gamma_P^G \circ (i_{L \subset P})_! &\cong \gamma_P^G \circ (\mu_{L \subset P})_! \circ (k_{L \subset P})_! \\
&\cong (\mu_{L \subset P})_! \circ \gamma_P^G \circ (k_{L \subset P})_! \quad\text{(by~\cite[\S B.10.1]{ahr})}\\
&\cong (\mu_{L\subset P})_! \circ \mathsf{Ind}_P^G [2\dim(G/P)]\quad\text{(by~\cite[\S B.17]{ahr}).}
\end{split}
\]
The second is proved similarly.
\end{proof}

Recall that, if $\cO$ is an $L$-orbit in $\cN_L$, the $G$-orbit in $\cN_G$ \emph{induced by} $\cO$ is the unique dense $G$-orbit in $G\cdot(\overline{\cO}+\fu_P)$, where $\fu_P$ denotes the nilpotent radical of $\fp$.

\begin{cor} \label{cor:induced-orbit}
Let $\cF\in\Perv_L(\cN_L,\bk)$.
\begin{enumerate}
\item If the support of $\cF$ is contained in the closure of the $L$-orbit $\cO$, then the support of $\Ind_{L\subset P}^G(\cF)$ is contained in the closure of the induced $G$-orbit.
\item If $\cF$ is nonzero, then $\Ind_{L\subset P}^G(\cF)$ is nonzero.
\end{enumerate}
\end{cor}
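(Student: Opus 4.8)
The plan is to deduce both parts from the description of $\Ind_{L\subset P}^G$ given in Lemma~\ref{lem:other-ind}, namely
$\Ind_{L\subset P}^G \cong (\mu_{L\subset P})_!\circ\mathsf{Ind}_P^G\circ(p_{L\subset P})^*[2\dim(G/P)]$,
together with the explicit geometry of the maps $p_{L\subset P}$ and $\mu_{L\subset P}$.

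For part (1), suppose $\cF$ is supported on $\overline{\cO}$. Since $p_{L\subset P}\colon\cN_P\to\cN_L$ is (up to the affine-bundle identification $\cN_P\cong \cN_L\times\fu_P$ coming from $\cN_P=(\cN_L+\fu_P)\cap\cN_P$, or more simply since $p_{L\subset P}$ sends $x+n$ with $n\in\fu_P$ to $x$) the pullback $(p_{L\subset P})^*\cF$ is supported on $p_{L\subset P}^{-1}(\overline{\cO})$, which is contained in $(\overline{\cO}+\fu_P)\cap\cN_P$. The induction equivalence $\mathsf{Ind}_P^G$ preserves supports in the evident $G$-equivariant sense, so $\mathsf{Ind}_P^G((p_{L\subset P})^*\cF)$ is supported on $G\times^P\bigl((\overline{\cO}+\fu_P)\cap\cN_P\bigr)$. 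Finally $(\mu_{L\subset P})_!$ of something supported there is supported on $\mu_{L\subset P}\bigl(G\times^P((\overline{\cO}+\fu_P)\cap\cN_P)\bigr)=G\cdot\bigl((\overline{\cO}+\fu_P)\cap\cN_G\bigr)$, whose closure is $\overline{G\cdot(\overline{\cO}+\fu_P)}$; by definition the dense $G$-orbit in this set is the induced orbit, so the support of $\Ind_{L\subset P}^G(\cF)$ lies in the closure of the induced orbit. The shift is irrelevant for support considerations.

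For part (2), the key point is that each functor in the composition of Lemma~\ref{lem:other-ind} is faithful on nonzero objects, or rather detects nonvanishing. The shift $[2\dim(G/P)]$ and the equivalence $\mathsf{Ind}_P^G$ obviously send nonzero objects to nonzero objects. The functor $(p_{L\subset P})^*$ is nonzero on nonzero objects because $p_{L\subset P}$ is a smooth surjective morphism (an affine bundle), so $(p_{L\subset P})^*$ is faithful (indeed fully faithful up to shift). The remaining issue is $(\mu_{L\subset P})_!$: here one uses that $\mu_{L\subset P}\colon G\times^P\cN_P\to\cN_G$ is proper, so $(\mu_{L\subset P})_!\cong(\mu_{L\subset P})_*$, and that it is surjective onto $G\cdot\cN_P$; more to the point, if $\cG$ is a nonzero (equivariant, constructible) complex on $G\times^P\cN_P$, pick a point $y$ in the support of $\cG$ with $\mu_{L\subset P}(y)=z$ minimal-dimensional among images of support points, or simply restrict to the open subset of $\cN_G$ over which $\mu_{L\subset P}$ is "nicest"; a clean way is to note that for the \emph{induced orbit} $\cO_{\mathrm{ind}}$ the map $\mu_{L\subset P}$ is generically finite over $\cO_{\mathrm{ind}}$ (this is part of the standard theory of induced orbits, and also follows from Lusztig's dimension estimate in~\cite[Proposition 2.2]{lusztig}), so over a point of $\cO_{\mathrm{ind}}$ the fibre is a nonempty finite set and proper base change computes the stalk there as a nonzero sum of stalks of $\cG$, provided $\cG$ has full support over that fibre. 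This last provision is exactly where care is needed.

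I expect the main obstacle to be part (2): one must ensure that the (co)stalk of $\Ind_{L\subset P}^G(\cF)$ does not accidentally vanish, i.e.\ that no cancellation occurs when pushing forward along $\mu_{L\subset P}$. The cleanest route around this is probably to appeal to adjunction rather than to the geometric description: $\Ind_{L\subset P}^G$ is left adjoint to $\Res^G_{L\subset P}$ (and right adjoint to ${}'\Res^G_{L\subset P}$), and one has the composite $\Res^G_{L\subset P}\circ\Ind_{L\subset P}^G$, whose value on $\cF$ admits a filtration (a Mackey-type formula) with a subquotient isomorphic to $\cF$ itself — concretely, the "diagonal" term, which arises because $L$ is a Levi of $P\cap P=P$ and the corresponding double-coset contribution is the identity. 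Hence $\Res^G_{L\subset P}\Ind_{L\subset P}^G(\cF)\neq 0$ whenever $\cF\neq 0$, which forces $\Ind_{L\subset P}^G(\cF)\neq0$. If such a Mackey filtration is not readily available in the references cited, the fallback is the generic-finiteness argument above, combined with the observation that on the open dense part of $G\times^P\cN_P$ lying over the induced orbit the complex $\mathsf{Ind}_P^G((p_{L\subset P})^*\cF)[2\dim G/P]$ restricts to a nonzero local system (shifted), so its proper pushforward to the induced orbit is nonzero.
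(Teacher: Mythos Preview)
Your argument for part~(1) is correct and is exactly what the paper does.

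For part~(2), your final ``fallback'' sketch is the right geometric idea, but there is a genuine gap in how you set it up. You work with a general nonzero $\cF$ throughout, and then speak of ``the induced orbit'' and of the complex $\mathsf{Ind}_P^G((p_{L\subset P})^*\cF)$ restricting to ``a nonzero local system'' over it. Neither statement makes sense for arbitrary $\cF$: there is no canonical induced orbit attached to a general perverse sheaf, and there is no reason for the pulled-back complex to be a shifted local system on any particular open subset. The paper closes this gap with a one-line reduction you omitted: since $\Ind_{L\subset P}^G$ is exact, it suffices to treat simple $\cF=\IC(\cO,\cE)$. Once $\cO$ is fixed, the induced orbit $\cO^G$ is well-defined; the dimension equality $\dim\cO^G=\dim G\times^P(\overline{\cO}+\fu_P)$ (from~\cite[Theorem~7.1.1]{cm}) forces $\nu_{(L,\cO)}^{-1}(\cO^G)$ to lie inside the open stratum $G\times^P(\cO+\fu_P)$, where the $\IC$ complex is genuinely a shifted local system $\cE'$; and then the restriction of $\Ind_{L\subset P}^G(\IC(\cO,\cE))$ to $\cO^G$ is the push-forward of $\cE'$ along a finite covering, hence nonzero.

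Your alternative Mackey-type route (exhibiting $\cF$ as a subquotient of $\Res^G_{L\subset P}\Ind^G_{L\subset P}(\cF)$) is a valid strategy in principle, but no such filtration is established anywhere in the paper or its references, so you would have to prove it from scratch---considerably more work than the reduction-to-simples argument above.
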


\begin{proof}
We use the description of $\Ind_{L\subset P}^G$ given in Lemma~\ref{lem:other-ind}. For part (1), we see immediately that the support of $\Ind_{L\subset P}^G(\cF)$ is contained in $\mu_{L\subset P}\bigl((p_{L\subset P})^{-1}(\overline{\cO})\bigr)=G\cdot(\overline{\cO}+\fu_P)$, as required. Since $\Ind_{L\subset P}^G$ is exact, it suffices to prove part (2) in the case that $\cF=\IC(\cO,\cE)$ is a simple object. We have 
\[ (p_{L\subset P})^*\IC(\cO,\cE)\cong\IC(\cO+\fu_P,\cE\boxtimes\underline{\bk}_{\fu_P})[-\dim(\fu_P)]. \]
Denote by $\cE'$ the unique $G$-equivariant local system on $G\times^P(\cO+\fu_P)$ whose pull-back to $G\times(\cO+\fu_P)$ is 
$\underline{\bk}_G\boxtimes(\cE\boxtimes\underline{\bk}_{\fu_P})$. Then we have
\[
\mathsf{Ind}_P^G \bigl( \IC(\cO + \fu_P, \cE \boxtimes \underline{\bk}_{\fu_P}) \bigr) \cong \IC(G \times^P (\cO + \fu_P), \cE')[-\dim(G/P)].
\]
Let $\nu_{(L,\cO)}:G\times^P(\overline{\cO} + \fu_P)\to G\cdot(\overline{\cO}+\fu_P)$ be the restriction of $\mu_{L\subset P}$. Then we obtain an isomorphism
\begin{equation}
\Ind_{L \subset P}^G \bigl( \IC(\cO,\cE) \bigr) \cong (\nu_{(L,\cO)})_! \IC(G \times^P (\cO + \fu_P),\cE').
\end{equation}
Let $\cO^G\subset G\cdot(\overline{\cO}+\fu_P)$ be the induced orbit. Then $\dim\cO^G=\dim G\times^P(\overline{\cO} + \fu_P)$ by~\cite[Theorem 7.1.1]{cm}, so the restriction of $\nu_{(L,\cO)}$ to $\nu_{(L,\cO)}^{-1}(\cO^G)$ is a finite covering. Hence the restriction of $\Ind_{L \subset P}^G \bigl( \IC(\cO,\cE) \bigr)$ to $\cO^G$ is (up to shift) the push-forward of $\cE'$ through this finite covering, which is nonzero. 
\end{proof}

\subsection{Induction of some simple perverse sheaves}
\label{ss:induction-IC}

Let us fix some nilpotent orbit $\cO \subset \cN_L$ and some $L$-equivariant local system $\cE$ on $\cO$. Let $\fz_L$ be the center of $\fl$, and set
\[
\fz_L^\circ:= \{x \in \fz_L \mid G_x^\circ=L \}.
\]
In this subsection we recall, following arguments of Lusztig \cite{lusztig} adapted to our Lie algebra setting by Letellier \cite{letellier}, how one can describe the perverse sheaf $\uInd_{L \subset P}^G \bigl( \IC(\cO + \fz_L, \cE \boxtimes \underline{\bk}_{\fz_L} ) \bigr)$.

We set
\[
Y_{(L,\cO)} := G \cdot \bigl( \cO + \fz_L^\circ \bigr), \qquad X_{(L,\cO)} := G \cdot \bigl( \overline{\cO} + \fz_L + \fu_P \bigr).
\]
The subsets $Y_{(L,\cO)}$ are the strata in Lusztig's stratification of $\fg$. Recall that $Y_{(L,\cO)}$ is a locally closed smooth subvariety of $\fg$ and $X_{(L,\cO)}=\overline{Y_{(L,\cO)}}$ is a union of strata; see~\cite[Lemma 6.2, Proposition 6.5, Proposition 6.6]{lusztig-cusp2}. We also set
\[
\widetilde{Y}_{(L,\cO)} := G \times^L \bigl( \cO + \fz_L^\circ \bigr), \qquad \widetilde{X}_{(L,\cO)} := G \times^P \bigl( \overline{\cO} + \fz_L + \fu_P \bigr),
\]
and let $\xymap_{(L,\cO)} : \widetilde{Y}_{(L,\cO)} \to Y_{(L,\cO)}$ be the morphism induced by the adjoint $G$-action, and $\pi_{(L,\cO)} : \widetilde{X}_{(L,\cO)} \to X_{(L,\cO)}$ be the restriction of $\pi_{L\subset P}$, so that we obtain the following cartesian square:
\begin{equation}
\label{eqn:diagram-X-Y}
\vcenter{
\xymatrix{
\widetilde{Y}_{(L,\cO)} \ar@{^{(}->}[r] \ar[d]_-{\xymap_{(L,\cO)}} & \widetilde{X}_{(L,\cO)} \ar[d]^-{\pi_{(L,\cO)}} \\
Y_{(L,\cO)} \ar@{^{(}->}[r] & X_{(L,\cO)}.
}
}
\end{equation}
Here the upper horizontal map is induced by the natural morphism $G \times^L \fl \to G \times^P \fp$, and is an open embedding with image 
\[ \pi_{(L,\cO)}^{-1}(Y_{(L,\cO)}) = G \times^P (\cO + \fz_L^\circ + \fu_P), \]
by~\cite[Lemma 5.1.27]{letellier}.
We denote by $\widetilde{\cE}$ the unique $G$-equivariant local system on $\widetilde{Y}_{(L,\cO)}$ whose pull-back to $G \times \bigl( \cO + \fz_L^\circ \bigr)$ is $\underline{\bk}_G \boxtimes ( \cE \boxtimes \underline{\bk}_{\fz_L^\circ} )$.  Let
\[
N_G(L,\cO) := \{ n \in N_G(L) \mid n \cdot \cO = \cO \}.
\]

\begin{lem}[{\cite[proof of Lemma 5.1.28]{letellier}}]
\label{lem:Galois-covering}
The morphism $\xymap_{(L,\cO)}$ is a Galois covering with Galois group $N_G(L,\cO)/L$.  
\end{lem}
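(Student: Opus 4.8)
The plan is to unwind the definitions and reduce the claim to a concrete statement about the $G$-action on $\cO + \fz_L^\circ$. First I would note that $\xymap_{(L,\cO)}$ is a $G$-equivariant morphism between smooth varieties of the same dimension (since $\fz_L^\circ$ is open in $\fz_L$, so $\cO + \fz_L^\circ$ is open in $\cO+\fz_L$, and $\dim \widetilde{Y}_{(L,\cO)} = \dim(G/L) + \dim\cO + \dim\fz_L = \dim Y_{(L,\cO)}$ because by the definition of $\fz_L^\circ$ the stabilizer $G_x^\circ = L$ for $x \in \cO+\fz_L^\circ$, hence the adjoint orbit map has finite fibres). Thus $\xymap_{(L,\cO)}$ is a finite surjective morphism, and to see it is an étale covering one checks that the fibres all have the same cardinality; this follows from homogeneity under $G$ once we identify a single fibre.

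The heart of the argument is to compute the fibre of $\xymap_{(L,\cO)}$ over a well-chosen point. Pick $x_0 \in \fz_L^\circ$ and a nilpotent $e \in \cO$, so that $x = e + x_0 \in \cO + \fz_L^\circ$ has $G_x^\circ = L$. A point of $\widetilde{Y}_{(L,\cO)} = G \times^L(\cO+\fz_L^\circ)$ lying over $x$ is a class $[g : e' + x_0']$ with $g\cdot(e'+x_0') = e + x_0$; decomposing into nilpotent and semisimple parts (using that $\fz_L^\circ$ consists of semisimple elements with centralizer-identity-component $L$) forces $g\cdot x_0' = x_0$ and $g \cdot e' = e$, so $g \in G_{x_0} = N_G(L)$ (here one uses $G_{x_0} = N_G(G_{x_0}^\circ) = N_G(L)$, valid because $x_0 \in \fz_L^\circ$), and then $g \cdot e' = e$ with $e, e' \in \cO$ forces $g \in N_G(L,\cO)$. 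Conversely any $n \in N_G(L,\cO)$ gives such a point $[n : n^{-1}\cdot x]$. Two elements $n, n'$ give the same point iff $n' \in nL$, so the fibre over $x$ is in canonical bijection with $N_G(L,\cO)/L$. Since $\xymap_{(L,\cO)}$ is $G$-equivariant and $G$ acts transitively on $Y_{(L,\cO)}$ modulo the action on $\cO+\fz_L^\circ$ — more precisely every point of $Y_{(L,\cO)}$ is $G$-conjugate to one in $\cO+\fz_L^\circ$, and the fibres over the latter are all identified as above — every fibre has cardinality $|N_G(L,\cO)/L|$, so $\xymap_{(L,\cO)}$ is an étale covering of degree $|N_G(L,\cO)/L|$.

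Finally, to see it is Galois with the asserted group: the finite group $N_G(L,\cO)/L$ acts on $\widetilde{Y}_{(L,\cO)}$ by $n \cdot [g : y] = [g n^{-1} : n \cdot y]$ (well-defined precisely because $n$ normalizes $L$ and preserves $\cO + \fz_L^\circ$), this action commutes with $\xymap_{(L,\cO)}$, and the computation of the fibre above shows the action is simply transitive on each fibre. Hence $\xymap_{(L,\cO)}$ is a Galois covering with Galois group $N_G(L,\cO)/L$, as claimed. The main obstacle is the fibre computation: one must carefully justify, using $x_0 \in \fz_L^\circ$, the identification $G_{x_0} = N_G(L)$ and the separation of the Jordan decomposition under the $G$-action, so that the condition $g\cdot(e'+x_0') = e+x_0$ decouples into the semisimple and nilpotent constraints; once this is in place the rest is formal. (This is exactly the content of the proof of \cite[Lemma 5.1.28]{letellier}, to which one may alternatively simply refer.)
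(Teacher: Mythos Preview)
The paper itself gives no proof of this lemma, merely citing Letellier; your proposal correctly follows the shape of that argument. However, there is a genuine error in your fibre computation. You write ``so $g \in G_{x_0} = N_G(L)$'', but neither assertion is correct: from $g \cdot x_0' = x_0$ you cannot conclude $g \in G_{x_0}$ (since $x_0'$ need not equal $x_0$), and the identity $G_{x_0} = N_G(L)$ is false in general --- already for $L = T$ a maximal torus in $\GL(n)$ and $x_0 \in \fz_T^\circ$ regular semisimple, one has $G_{x_0} = T$ while $N_G(T)/T = \fS_n$ is nontrivial. The correct deduction is: since $G_{x_0}^\circ = L = G_{x_0'}^\circ$ (both $x_0, x_0' \in \fz_L^\circ$), conjugating centralizers gives
\[
g L g^{-1} = g\, G_{x_0'}^\circ\, g^{-1} = G_{g \cdot x_0'}^\circ = G_{x_0}^\circ = L,
\]
so $g \in N_G(L)$ as desired.

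There is also a smaller slip in your opening paragraph: you assert ``$G_x^\circ = L$ for $x \in \cO + \fz_L^\circ$'', but for $x = e + x_0$ with $e \ne 0$ the Jordan decomposition gives $G_x = G_e \cap G_{x_0}$, whose identity component is $(L_e)^\circ$, not $L$. This does not actually damage the argument: finiteness of the fibres follows directly from your explicit fibre computation (once corrected as above), and the dimension equality and \'etaleness then follow, rather than precede. With these two fixes your argument is complete and is essentially the one in Letellier.
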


Consequently, given any irreducible $\bk[N_G(L,\cO)/L]$-module $E$, we can form the corresponding irreducible $G$-equivariant local system on $Y_{(L,\cO)}$:
\begin{equation}\label{eqn:Galois-covering}
\cL_E := ((\xymap_{(L,\cO)})_*\ubk \otimes E)^{N_G(L,\cO)/L}.
\end{equation}

The object $(\xymap_{(L,\cO)})_* \widetilde{\cE}$ is also a $G$-equivariant local system on $Y_{(L,\cO)}$, so that one can consider $\IC \bigl( Y_{(L,\cO)}, (\xymap_{(L,\cO)})_* \widetilde{\cE} \bigr)\in\Perv_G(\fg,\bk)$.

\begin{prop}
\label{prop:induction-IC}
There exists a canonical isomorphism
\[
\uInd_{L \subset P}^G \bigl( \IC(\cO + \fz_L, \cE \boxtimes \underline{\bk}_{\fz_L} ) \bigr) \cong \IC \bigl( Y_{(L,\cO)}, (\xymap_{(L,\cO)})_* \widetilde{\cE} \bigr).
\]
\end{prop}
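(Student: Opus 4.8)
The plan is to compute the left-hand side using the alternative description of $\uInd^G_{L\subset P}$ from Lemma~\ref{lem:other-ind}, then identify the result with the claimed $\IC$ sheaf on $Y_{(L,\cO)}$ by analysing the geometry of the maps in diagram~\eqref{eqn:diagram-X-Y}. First I would observe that, exactly as in the proof of Corollary~\ref{cor:induced-orbit}, pulling back $\IC(\cO+\fz_L,\cE\boxtimes\ubk_{\fz_L})$ along $q_{L\subset P}:\fp\to\fl$ gives, up to shift, the $\IC$ sheaf of $\overline{\cO}+\fz_L+\fu_P$ with the local system $\cE\boxtimes\ubk_{\fz_L}\boxtimes\ubk_{\fu_P}$, and applying $\mathsf{Ind}^G_P$ turns this into (a shift of) $\IC\bigl(\widetilde X_{(L,\cO)},\widetilde{\cE}{}'\bigr)$ for the evident $G$-equivariant local system $\widetilde{\cE}{}'$ on the smooth variety $\widetilde X_{(L,\cO)} = G\times^P(\overline{\cO}+\fz_L+\fu_P)$ (using that $\mathsf{Ind}^G_P$ is an equivalence commuting with $\IC$ up to the shift $[-\dim(G/P)]$). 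Collecting the shifts with the $[2\dim(G/P)]$ in Lemma~\ref{lem:other-ind}, one gets
\[
\uInd^G_{L\subset P}\bigl(\IC(\cO+\fz_L,\cE\boxtimes\ubk_{\fz_L})\bigr)\cong (\pi_{(L,\cO)})_!\,\IC\bigl(\widetilde X_{(L,\cO)},\widetilde{\cE}{}'\bigr).
\]

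Next I would show the right-hand side of this last isomorphism is $\IC\bigl(Y_{(L,\cO)},(\xymap_{(L,\cO)})_*\widetilde{\cE}\bigr)$. The key geometric input is that $\pi_{(L,\cO)}$ is proper and semismall, indeed small over the open stratum $Y_{(L,\cO)}$, with $\dim\widetilde X_{(L,\cO)}=\dim X_{(L,\cO)}$ — this is precisely the content of the dimension estimates in~\cite[Proposition 2.2]{lusztig} together with the description of strata in~\cite[Lemma 6.2, Proposition 6.5, Proposition 6.6]{lusztig-cusp2}; the relevant statement in the Lie algebra setting is~\cite[Lemma 5.1.27]{letellier}, which also identifies $\pi_{(L,\cO)}^{-1}(Y_{(L,\cO)})=G\times^P(\cO+\fz_L^\circ+\fu_P)$ and tells us the open embedding $\widetilde Y_{(L,\cO)}\hookrightarrow\widetilde X_{(L,\cO)}$ has this as its image. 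Over the open stratum, the map $\pi_{(L,\cO)}$ restricts to $\xymap_{(L,\cO)}$ up to the affine-space factor coming from $\fu_P$, so the restriction of $\IC\bigl(\widetilde X_{(L,\cO)},\widetilde{\cE}{}'\bigr)$ to $\pi_{(L,\cO)}^{-1}(Y_{(L,\cO)})$ is (up to shift) the local system whose pushforward under $\xymap_{(L,\cO)}$ is $(\xymap_{(L,\cO)})_*\widetilde{\cE}$ by the very definition of $\widetilde{\cE}$. A standard argument (using that $(\pi_{(L,\cO)})_!$ of an $\IC$ sheaf along a proper small map is again an $\IC$ sheaf, applied via the open-closed decomposition $X_{(L,\cO)}=Y_{(L,\cO)}\sqcup(X_{(L,\cO)}\smallsetminus Y_{(L,\cO)})$ and the support/cosupport conditions) then yields $(\pi_{(L,\cO)})_!\,\IC\bigl(\widetilde X_{(L,\cO)},\widetilde{\cE}{}'\bigr)\cong\IC\bigl(Y_{(L,\cO)},(\xymap_{(L,\cO)})_*\widetilde{\cE}\bigr)$.

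The main obstacle, and the step requiring the most care, is the smallness of $\pi_{(L,\cO)}$ — i.e. checking that for every non-open stratum $Y_{(M,\cO')}\subset X_{(L,\cO)}$ the fibres of $\pi_{(L,\cO)}$ over it have dimension strictly less than half the codimension of that stratum. This does not follow from a formal manipulation; it rests on Lusztig's dimension estimates in~\cite[Proposition 2.2]{lusztig} (for the nilpotent parts) combined with the structure of $\fz_L$-eigenspace decompositions governing how the central parameter $\fz_L^\circ$ interacts with the parabolic induction, as worked out in the Lie-algebra setting by Letellier. I would cite~\cite[Lemma 5.1.27, proof of Lemma 5.1.28]{letellier} for this rather than reprove it. A minor additional point is to verify that the local system $(\xymap_{(L,\cO)})_*\widetilde{\cE}$ is the correct one, i.e. that the canonical identifications are compatible with the Galois covering structure of Lemma~\ref{lem:Galois-covering} and the open embedding $\widetilde Y_{(L,\cO)}\hookrightarrow\widetilde X_{(L,\cO)}$; this is where the word ``canonical'' in the statement gets its meaning, and it is a routine but slightly tedious bookkeeping of equivariant structures.
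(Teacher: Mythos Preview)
Your proposal is correct and follows essentially the same route as the paper: compute $\uInd^G_{L\subset P}$ via Lemma~\ref{lem:other-ind}, identify the result as $(\pi_{(L,\cO)})_!$ of an $\IC$ sheaf on $\widetilde X_{(L,\cO)}$, then use the cartesian square~\eqref{eqn:diagram-X-Y} together with Lusztig's fibre-dimension estimates (in Letellier's Lie-algebra formulation, specifically~\cite[Proposition~5.1.33]{letellier}) to verify the $\IC$ conditions on the pushforward. One small slip to correct: $\widetilde X_{(L,\cO)}=G\times^P(\overline{\cO}+\fz_L+\fu_P)$ is \emph{not} smooth in general (since $\overline{\cO}$ need not be), so your local system $\widetilde{\cE}{}'$ lives only on the open subset $G\times^P(\cO+\fz_L+\fu_P)$ and one is genuinely pushing forward an $\IC$ sheaf rather than a shifted local system---this is why the fibre-dimension argument is needed even before reaching the boundary of $Y_{(L,\cO)}$.
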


\begin{proof}
Using the description of $\uInd_{L \subset P}^G$ given in Lemma~\ref{lem:other-ind}, and calculating as in the proof of Corollary~\ref{cor:induced-orbit}, we obtain a canonical isomorphism
\begin{equation}
\uInd_{L \subset P}^G \bigl( \IC(\cO + \fz_L, \cE \boxtimes \underline{\bk}_{\fz_L} ) \bigr) \cong (\pi_{(L,\cO)})_! \IC(G \times^P (\cO + \fz_L + \fu_P), \underline{\cE}),
\end{equation}
where $\underline{\cE}$ is the unique $G$-equivariant local system on $G \times^P \bigl( \cO + \fz_L + \fu_P \bigr)$ whose pull-back to $G \times \bigl( \cO + \fz_L + \fu_P \bigr)$ is $\underline{\bk}_G \boxtimes (\cE \boxtimes \underline{\bk}_{\fz_L} \boxtimes \underline{\bk}_{\fu_P})$.
So we need an isomorphism
\begin{equation}
\label{eqn:isom-IC}
(\pi_{(L,\cO)})_! \IC(G \times^P (\cO + \fz_L + \fu_P), \underline{\cE}) \cong \IC \bigl( Y_{(L,\cO)}, (\xymap_{(L,\cO)})_* \widetilde{\cE} \bigr).
\end{equation}
The isomorphism \eqref{eqn:isom-IC} is proved in \cite[Proposition 5.1.33]{letellier} for $\bk=\Qlb$. The proof works for arbitrary coefficients; for completeness we briefly recall the main steps. 

We first observe that the left-hand side of \eqref{eqn:isom-IC} is supported on $X_{(L,\cO)}=\overline{Y_{(L,\cO)}}$. To prove that it is isomorphic to the right-hand side we need to check that it satisfies the conditions defining intersection cohomology sheaves; see e.g.~\cite[\S4.1.1]{letellier}.

From the cartesian square~\eqref{eqn:diagram-X-Y} it is not difficult to deduce that the restriction of the left-hand side of \eqref{eqn:isom-IC} to $Y_{(L,\cO)}$ is isomorphic to the shifted local system $(\xymap_{(L,\cO)})_* \widetilde{\cE}[\dim(Y_{(L,\cO)})]$; see \cite[p.~76]{letellier}.
The other conditions that have to be checked concern the dimension of the support of the (ordinary) cohomology sheaves of $(\pi_{(L,\cO)})_! \IC(G \times^P (\cO + \fz_L + \fu_P), \underline{\cE})$. They follow from results on the dimension of the fibres of $\pi_{(L, \cO)}$ as in the case $\bk=\Qlb$; see \cite[p.~77]{letellier}.
\end{proof}

\begin{cor}
\label{cor:induction-IC}
Suppose that $(\cO,\cE)$ is a cuspidal pair for $L$. Then with $(\cO',\cE')$ defined as in Corollary~{\rm \ref{cor:Fourier-cuspidal}}, we have a canonical isomorphism
\[
\bT_{\fg}\bigl((a_G)_!\Ind_{L\subset P}^G(\IC(\cO,\cE))\bigr)\cong \IC \bigl( Y_{(L,\cO')}, (\xymap_{(L,\cO')})_* \widetilde{\cE'} \bigr).
\]
\end{cor}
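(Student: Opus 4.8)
The plan is to reduce this corollary to Proposition~\ref{prop:induction-IC} by interchanging the Fourier--Sato transform with induction, using Corollary~\ref{cor:restriction-Fourier} (or rather its induction half) together with the description of $\bT_{\fg}$ on a cuspidal simple object provided by Corollary~\ref{cor:Fourier-cuspidal}. First I would apply $\bT_\fg\circ(a_G)_!$ to $\Ind_{L\subset P}^G(\IC(\cO,\cE))$ and use the isomorphism $\bT_{\fg}\circ(a_G)_!\circ\Ind_{L\subset P}^G\cong\uInd^G_{L\subset P}\circ\bT_{\fl}\circ(a_L)_!$ from Corollary~\ref{cor:restriction-Fourier}. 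This rewrites the left-hand side as $\uInd^G_{L\subset P}\bigl(\bT_\fl((a_L)_!\IC(\cO,\cE))\bigr)$.

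Next I would feed in the identification of $\bT_\fl((a_L)_!\IC(\cO,\cE))$ coming from Corollary~\ref{cor:Fourier-cuspidal} applied to the group $L$: since $(\cO,\cE)$ is cuspidal for $L$, we have
\[
\bT_{\fl}\bigl((a_L)_!\IC(\cO,\cE)\bigr)\cong\IC(\cO'+\fz_L,\cE'\boxtimes\ubk_{\fz_L}),
\]
where $(\cO',\cE')$ is the cuspidal pair for $L$ attached to $(\cO,\cE)$ by that corollary (note $\fz_L$ plays here the role that $\fz_G$ plays in the statement of Corollary~\ref{cor:Fourier-cuspidal}, since it is the centre of $\fl$). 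Substituting, the left-hand side becomes $\uInd^G_{L\subset P}\bigl(\IC(\cO'+\fz_L,\cE'\boxtimes\ubk_{\fz_L})\bigr)$. Now Proposition~\ref{prop:induction-IC}, applied to the pair $(\cO',\cE')$ in place of $(\cO,\cE)$, identifies this with $\IC\bigl(Y_{(L,\cO')},(\xymap_{(L,\cO')})_*\widetilde{\cE'}\bigr)$, which is exactly the right-hand side. Chaining the three canonical isomorphisms gives the claim.

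The only real point requiring care — and the step I would flag as the main obstacle — is bookkeeping about which group's centre appears where, and checking that Corollary~\ref{cor:Fourier-cuspidal} is genuinely applicable: it is stated for a cuspidal pair of a group, and here we apply it to $L$, using that $(\cO,\cE)$ is by hypothesis a cuspidal pair for $L$ and that the resulting $(\cO',\cE')$ is again cuspidal for $L$ (which is part of the conclusion of that corollary, via Proposition~\ref{prop:Fourier-cuspidal}). One should also confirm that the $(\cO',\cE')$ produced this way coincides with the one named in the statement of the present corollary — but that is simply the definition of the notation $(\cO',\cE')$ as introduced in Corollary~\ref{cor:Fourier-cuspidal}. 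Everything else is a formal concatenation of previously established canonical isomorphisms, so the proof is short.
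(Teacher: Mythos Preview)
Your proposal is correct and follows essentially the same route as the paper: the paper's proof likewise combines Corollary~\ref{cor:restriction-Fourier} and Corollary~\ref{cor:Fourier-cuspidal} (applied to $L$) to rewrite the left-hand side as $\uInd_{L\subset P}^G\bigl(\IC(\cO'+\fz_L,\cE'\boxtimes\ubk_{\fz_L})\bigr)$, and then invokes Proposition~\ref{prop:induction-IC}. Your extra commentary on the bookkeeping for applying Corollary~\ref{cor:Fourier-cuspidal} to $L$ rather than $G$ is accurate and addresses the only point needing care.
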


\begin{proof}
Using Corollary~\ref{cor:restriction-Fourier} and Corollary~\ref{cor:Fourier-cuspidal} (applied to $L$), we find that the left-hand side is isomorphic to $\uInd_{L \subset P}^G ( \IC(\cO' + \fz_L, \cE' \boxtimes \underline{\bk}_{\fz_L} ))$. So the result follows from Proposition~\ref{prop:induction-IC}. 
\end{proof}

\begin{rmk}
In particular, it follows from Proposition \ref{prop:induction-IC} (repectively, Corollary~\ref{cor:induction-IC}) that $\uInd_{L \subset P}^G \bigl( \IC(\cO + \fz_L, \cE \boxtimes \underline{\bk}_{\fz_L} ) \bigr)$ (respectively, $\Ind_{L\subset P}^G(\IC(\cO,\cE))$) does not depend on $P$, up to canonical isomorphism.
\end{rmk}

We conclude this subsection by recalling some results about cuspidal pairs in characteristic~$0$ due to Lusztig. Note that \cite{lusztig} only treats the case of $\Qlb$-sheaves in the {\'e}tale setting; however one can easily check that the proofs adapt directly to our setting.

\begin{prop}[Lusztig]\label{prop:lusztig-cuspidal}
Assume that $\bk$ has characteristic $0$, and let $(\cO,\cE)$ be a cuspidal pair such that $\cE$ is absolutely irreducible.  Then:
\begin{enumerate}
\item We have $N_G(L,\cO) = N_G(L)$. Moreover, the isomorphism class of $\cE$ is preserved by the action of $N_G(L)/L$.\label{it:ngl}
\item There is a unique irreducible summand $\widebar{\cE}$ of the local system $(\xymap_{(L,\cO)})_*\widetilde{\cE}$ on $Y_{(L,\cO)}$ such that the cohomology sheaf $\cH^{-\dim(Y_{(L,\cO)})}\IC(Y_{(L,\cO)},\widebar{\cE})$ has nonzero restriction to the orbit in $\cN_G$ induced by $\cO$. The local system $\widebar{\cE}$ is absolutely irreducible, its multiplicity in $(\xymap_{(L,\cO)})_*\widetilde{\cE}$ is $1$, and moreover we have $(\xymap_{(L,\cO)})^*\widebar{\cE} \cong \widetilde{\cE}$. \label{it:loc-descend}
\item There is a $\bk$-algebra isomorphism $\End((\xymap_{(L,\cO)})_*\widetilde{\cE}) \cong \bk[N_G(L)/L]$ such that the resulting bijection
\begin{equation*}
\left\{\begin{array}{c}
\text{isomorphism classes of irreducible} \\
\text{summands of $(\xymap_{(L,\cO)})_*\widetilde{\cE}$}
\end{array}\right\}
\longleftrightarrow
\Irr(\bk[N_G(L)/L])
\end{equation*}
associates to $E \in \Irr(\bk[N_G(L)/L])$ the local system $\cL_E \otimes \widebar{\cE}$, where $\cL_E$ is as in~\eqref{eqn:Galois-covering}.\label{it:endo-ring}
\end{enumerate}
\end{prop}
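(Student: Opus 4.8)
The plan is to deduce Proposition~\ref{prop:lusztig-cuspidal} from Lusztig's original results in~\cite{lusztig}, using the dictionary between the group/unipotent setting and the Lie algebra/nilpotent setting, and between $\Qlb$-sheaves and $\bk$-sheaves for a characteristic-$0$ field $\bk$ over which the relevant representations are defined. Since all three statements concern a fixed cuspidal pair $(\cO,\cE)$ with $\cE$ absolutely irreducible, and since the formation of $(\xymap_{(L,\cO)})_*\widetilde{\cE}$, its endomorphism algebra, its decomposition, and the relevant cohomology sheaves are all compatible with extension of the coefficient field (for $\bk$ of characteristic $0$), it suffices to treat the case $\bk=\Qlb$.

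First I would recall the relevant facts from~\cite{lusztig} in their original form. Part~\eqref{it:ngl} is~\cite[Theorem 9.2]{lusztig} together with the observation, already used by Lusztig, that a cuspidal pair is stable under $N_G(L)/L$ (this is why the induction series are indexed by $N_G(L)/L$ and not by a stabilizer subgroup). For part~\eqref{it:loc-descend}, Lusztig shows in~\cite[\S 3]{lusztig} that among the simple summands of the complex $K_L := \uInd_{L\subset P}^G(\IC(\cO+\fz_L,\cE\boxtimes\ubk_{\fz_L}))$ — which by Proposition~\ref{prop:induction-IC} is $\IC(Y_{(L,\cO)},(\xymap_{(L,\cO)})_*\widetilde{\cE})$ — exactly one has its restriction to the induced orbit in $\cN_G$ (placed inside $\fg$ via the cuspidal support datum, i.e.\ sitting over $Y_{(L,\cO)}$ at the relevant codimension) nonzero in the top degree; this is the component that matches the trivial representation of $N_G(L)/L$ under the Springer-type correspondence, and Lusztig's cleanness and dimension arguments give both its multiplicity-one property and the relation $(\xymap_{(L,\cO)})^*\widebar{\cE}\cong\widetilde{\cE}$, which just says that $\widetilde{\cE}$ is the pullback of a local system on the base, i.e.\ that $\xymap_{(L,\cO)}$ trivializes it — a consequence of part~\eqref{it:ngl}, since the $N_G(L)/L$-action on $\widetilde{\cE}$ is trivial. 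Part~\eqref{it:endo-ring} is the endomorphism-algebra computation of~\cite[Theorem 9.2]{lusztig}: by Lemma~\ref{lem:Galois-covering} together with~\eqref{it:ngl} one has $\xymap_{(L,\cO)}$ a Galois covering with group $W_{(L,\cO)} := N_G(L)/L$, so by the projection formula $(\xymap_{(L,\cO)})_*\widetilde{\cE}\cong(\xymap_{(L,\cO)})_*\ubk\otim_{\bk}(\text{a fibre of }\widetilde{\cE})$, and since $\widebar{\cE}$ descends $\widetilde{\cE}$ we get $(\xymap_{(L,\cO)})_*\widetilde{\cE}\cong((\xymap_{(L,\cO)})_*\ubk)\otimes\widebar{\cE}$; taking endomorphisms and using that $\widebar{\cE}$ is absolutely irreducible (part~\eqref{it:loc-descend}) gives $\End((\xymap_{(L,\cO)})_*\widetilde{\cE})\cong\End((\xymap_{(L,\cO)})_*\ubk)\cong\bk[W_{(L,\cO)}]$, the last isomorphism being the standard description of the endomorphism algebra of the pushforward of the constant sheaf along a Galois covering. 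Chasing through this chain of isomorphisms, the summand of $(\xymap_{(L,\cO)})_*\widetilde{\cE}$ indexed by $E\in\Irr(\bk[W_{(L,\cO)}])$ is precisely $\cL_E\otimes\widebar{\cE}$, with $\cL_E$ as in~\eqref{eqn:Galois-covering}, which is the assertion.

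The remaining point is to justify the transfers of setting. The passage from unipotent classes in $G$ to nilpotent orbits in $\cN_G$ (and from $G$ to $\fg$) has already been invoked in the paper, citing~\cite[\S 2.2]{lusztig-cusp1} for the classification of cuspidal pairs; the same reference, and the compatibility of Fourier--Sato transform and of induction with this passage established in~\cite{letellier} (see in particular Proposition~\ref{prop:induction-IC} above, which is quoted from~\cite[Proposition 5.1.33]{letellier}), ensure that the statements~\eqref{it:ngl}--\eqref{it:endo-ring}, which are about $\uInd_{L\subset P}^G(\IC(\cO+\fz_L,\cE\boxtimes\ubk_{\fz_L}))$ and its summands, carry over verbatim. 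The passage between the \'etale and strong topologies, and between $\Qlb$ and an arbitrary characteristic-$0$ field $\bk$, is routine: all the objects in play are defined over such a $\bk$, IC sheaves, pushforwards along finite (hence proper) maps, and Hom/End spaces commute with the relevant field extensions, and semisimplicity of $\bk[W_{(L,\cO)}]$ in characteristic $0$ means the summand decomposition is insensitive to enlarging $\bk$.

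I expect the main obstacle to be purely expository rather than mathematical: namely, matching the precise normalizations (shifts, the placement of $\cO+\fz_L^\circ$ inside the Lusztig stratum $Y_{(L,\cO)}$, and the identification of ``the orbit induced by $\cO$'' with the relevant closed piece) between Lusztig's formulation in~\cite{lusztig} and Letellier's Lie-algebra reworking in~\cite{letellier}, so that the ``top-degree nonvanishing'' characterization of $\widebar{\cE}$ in~\eqref{it:loc-descend} is stated against exactly the right degree $-\dim(Y_{(L,\cO)})$. Once that bookkeeping is fixed, each of~\eqref{it:ngl},~\eqref{it:loc-descend},~\eqref{it:endo-ring} is a direct citation of~\cite[\S 9 and \S 3]{lusztig} combined with Lemma~\ref{lem:Galois-covering} and Proposition~\ref{prop:induction-IC}.
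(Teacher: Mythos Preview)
Your proposal is essentially correct and follows the same approach as the paper: cite \cite[Theorem~9.2]{lusztig} for parts~\eqref{it:ngl} and~\eqref{it:loc-descend}, then deduce part~\eqref{it:endo-ring} from the projection formula isomorphism $(\xymap_{(L,\cO)})_*\widetilde{\cE}\cong(\xymap_{(L,\cO)})_*\ubk\otimes\widebar{\cE}$.

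One point deserves tightening. You justify $(\xymap_{(L,\cO)})^*\widebar{\cE}\cong\widetilde{\cE}$ by saying it is ``a consequence of part~\eqref{it:ngl}, since the $N_G(L)/L$-action on $\widetilde{\cE}$ is trivial''. But part~\eqref{it:ngl} only gives that the \emph{isomorphism class} of $\cE$ is $N_G(L)/L$-stable; this is weaker than the existence of a descent datum, so it does not by itself imply that $\widetilde{\cE}$ is pulled back from $Y_{(L,\cO)}$, let alone that it is pulled back from the specific summand $\widebar{\cE}$. The paper fills this gap by invoking standard Clifford theory together with the multiplicity-$1$ statement: when an irreducible local system on a Galois cover has $W$-stable isomorphism class, each irreducible summand $\cF$ of its pushforward satisfies $\xymap^*\cF\cong\widetilde{\cE}^{\oplus m}$ with $m$ equal to the multiplicity of $\cF$; since $\widebar{\cE}$ has multiplicity~$1$, the pullback isomorphism follows. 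Once you have this, the projection-formula step for part~\eqref{it:endo-ring} goes through exactly as you wrote (your intermediate expression ``$(\xymap_{(L,\cO)})_*\ubk\otimes_\bk(\text{a fibre of }\widetilde{\cE})$'' should be discarded; the correct application uses $\widetilde{\cE}\cong\xymap^*\widebar{\cE}$ directly).
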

\begin{proof}
Part~\eqref{it:ngl} is~\cite[Theorem~9.2(b)]{lusztig}. Part~\eqref{it:loc-descend} is all contained in~\cite[Theorem~9.2({c})]{lusztig} except for the statement that $(\xymap_{(L,\cO)})^*\widebar{\cE} \cong \widetilde{\cE}$, which follows (by standard Clifford theory) from the multiplicity-$1$ statement in view of part~\eqref{it:ngl}. The projection formula gives us an isomorphism $(\xymap_{(L,\cO)})_*\widetilde{\cE} \cong (\xymap_{(L,\cO)})_*\ubk \otimes \widebar{\cE}$, and part~\eqref{it:endo-ring} follows. 
\end{proof}

\begin{rmk}
The $\bk$-algebra isomorphism $\End((\xymap_{(L,\cO)})_*\widetilde{\cE}) \cong \bk[N_G(L)/L]$ defined in Proposition \ref{prop:lusztig-cuspidal}\eqref{it:endo-ring} is the same as that defined in~\cite[Theorem~9.2(d)]{lusztig}, by the uniqueness part of the latter statement.
\end{rmk}

\subsection{Modular reduction}
\label{ss:modular-reduction}

Let $\K$ be a finite extension of $\Ql$, $\O$ its ring of integers, and let $\F$ be its residue field. We will refer to such a triple $(\K,\O,\F)$ as an \emph{$\ell$-modular system.} Assume that for any $x \in \cN_G$ the irreducible representations of the finite group $A_G(x)$ are defined over $\K$. For $\E=\F$ or $\K$, we denote by $K_G(\cN_G,\E)$ the Grothendieck group of the category $\Db_G(\cN_G,\E)$. It is a free $\Z$-module with basis $\{[\cF]\}$ indexed by the isomorphism classes of simple objects $\cF$ in $\Perv_G(\cN_G,\E)$.

Recall~\cite[\S2.9]{juteau-aif} that there is a $\Z$-linear \emph{modular reduction} map
\[
d: K_G(\cN_G,\K) \to K_G(\cN_G,\F)
\]
which satisfies the following property. Let $\cF$ be an object in $\Perv_G(\cN_G,\K)$, and let $\cF_{\O}$ be a torsion-free object in $\Perv_G(\cN_G,\O)$ such that $\cF \cong \K \otimes_{\O} \cF_{\O}$;
then we have $d([\cF])=[\F \lotimes_{\O} \cF_{\O}]$.  By abuse of terminology, we say that a simple object $\cG$ in $\Perv_G(\cN_G,\F)$ `occurs in the modular reduction of $\cF$' if $[\cG]$ appears with non-zero multiplicity in $d([\cF])$. Note that if $\cF=\IC(\cO,\cE)$ for some $G$-orbit $\cO \subset \cN_G$ and some irreducible $G$-equivariant local system $\cE$ on $\cO$, then by our assumption on $\K$ there exists a $G$-equivariant $\O$-free local system $\cE_{\O}$ on $\cO$ such that $\cE \cong \K \otimes_{\O} \cE_{\O}$. In this situation we can take $\cF_{\O}=\IC(\cO,\cE_{\O})$; hence $d([\cF])=[\F \lotimes_\O \IC(\cO,\cE_\O)]$. In particular we deduce that if $\cE'$ is any composition factor of the $G$-equivariant local system $\F \lotimes_\O \cE_\O$, then $\IC(\cO,\cE')$ occurs in the modular reduction of $\cF$.

The following proposition is a crucial tool for identifying modular cuspidal pairs.

\begin{prop}
\label{prop:modular-reduction-cuspidal}
Let $\cG$ be a simple object in $\Perv_G(\cN_G,\F)$ that occurs in the modular reduction of a cuspidal simple object $\cF$ of $\Perv_G(\cN_G,\K)$. Then $\cG$ is cuspidal.
\end{prop}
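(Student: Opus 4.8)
The plan is to use the characterization of cuspidality via the restriction functors $\Res^G_{L\subset P}$, combined with the fact that modular reduction commutes with these functors at the level of Grothendieck groups. First I would fix a proper parabolic $P\subsetneq G$ and a Levi factor $L\subset P$, and aim to show $\Res^G_{L\subset P}(\cG)=0$. Since $\Res^G_{L\subset P}$ is exact, it induces a homomorphism on Grothendieck groups $[\Res^G_{L\subset P}]:K_G(\cN_G,\E)\to K_L(\cN_L,\E)$ for $\E=\K$ or $\F$. The key compatibility I would establish (or invoke from the construction of $d$ in~\cite{juteau-aif}, since $\Res^G_{L\subset P}$ is built from the standard functors $(p_{L\subset P})_*$ and $(i_{L\subset P})^!$, all of which are compatible with modular reduction) is that the square relating $d_G$, $d_L$, and the two restriction maps commutes:
\[
d_L\circ[\Res^G_{L\subset P}] = [\Res^G_{L\subset P}]\circ d_G.
\]

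Granting this, the argument runs as follows. Let $\cF$ be the cuspidal simple object of $\Perv_G(\cN_G,\K)$ whose modular reduction contains $[\cG]$, say $d_G([\cF]) = [\cG] + \sum_i m_i[\cG_i]$ with $m_i\geq 0$ and the $\cG_i$ simple. Applying $[\Res^G_{L\subset P}]$ and using the commuting square, we get
\[
0 = [\Res^G_{L\subset P}]\bigl(d_G([\cF])\bigr) = d_L\bigl([\Res^G_{L\subset P}][\cF]\bigr) = d_L(0) = 0,
\]
where the first equality uses that $\cF$ is cuspidal, hence $\Res^G_{L\subset P}(\cF)=0$ by Proposition~\ref{prop:conditions-cuspidal}. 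This only tells us the alternating sum $[\Res^G_{L\subset P}](d_G([\cF]))$ vanishes, not that each term does. To extract information about $\cG$ itself I would use a positivity/support argument: $\Res^G_{L\subset P}$ is an \emph{exact} functor between abelian categories of perverse sheaves, so $[\Res^G_{L\subset P}]$ sends effective classes (genuine objects) to effective classes. Thus $\sum_i m_i [\Res^G_{L\subset P}(\cG_i)]$ and $[\Res^G_{L\subset P}(\cG)]$ are both effective, and their sum is zero in $K_L(\cN_L,\F)$; since effective classes in a Grothendieck group of an abelian category with a length structure can only sum to zero if each is zero, we conclude $[\Res^G_{L\subset P}(\cG)]=0$, and again by exactness $\Res^G_{L\subset P}(\cG)=0$. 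As $P$ and $L$ were arbitrary, $\cG$ is cuspidal.

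The main obstacle is the commutation of modular reduction with $\Res^G_{L\subset P}$. The cleanest route is to realize $d$ concretely: choose an $\O$-torsion-free lift $\cF_\O\in\Perv_G(\cN_G,\O)$ of $\cF$, so that $d_G([\cF])=[\F\lotimes_\O\cF_\O]$, and observe that $\Res^G_{L\subset P}$ is defined over $\O$ as well (the functors $(p_{L\subset P})_*$ and $(i_{L\subst P})^!$ are defined for $\O$-coefficients, and by~\cite[Proposition 4.7]{ahr},~\cite[Proposition 3.1]{am} they are perverse-exact in this context too). Then base change $\F\lotimes_\O(-)$ commutes with $(p_{L\subst P})_*$ (proper pushforward, or more carefully one uses that these maps are nice enough — $p_{L\subset P}$ is proper on $\cN_P$, being a fibration with projective-over-affine fibers after the relevant identifications) and with $(i_{L\subset P})^!$ for the closed embedding $i_{L\subset P}$. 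One subtlety to check is torsion: $\Res^G_{L\subst P}(\cF_\O)$ need not be torsion-free, but this does not matter for the Grothendieck-group identity, since $[\F\lotimes_\O\Res^G_{L\subst P}(\cF_\O)] = d_L([\Res^G_{L\subst P}(\cF)])$ holds regardless by the defining property of $d_L$ applied after replacing $\Res^G_{L\subst P}(\cF_\O)$ by a torsion-free lift of its generic fiber (the two differ by a class killed by $d_L$... — more simply, the identity $d_L\circ[\Res] = [\Res]\circ d_G$ is an identity of maps on $K_G(\cN_G,\K)$, and both sides are determined on the basis $\{[\cF]\}$, where one computes directly using $\F\lotimes_\O(-)$ and exactness). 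I would cite the relevant compatibility in~\cite{juteau-aif} rather than re-proving it.
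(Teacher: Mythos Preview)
Your overall strategy --- show that modular reduction commutes with restriction at the Grothendieck-group level, then use exactness and positivity --- is sound, but the justification you give for the commutation is wrong. The map $p_{L\subset P}:\cN_P\to\cN_L$ is \emph{not} proper: its fibres are affine spaces (translates of $\fu_P$), not projective. Likewise, for the closed embedding $i_{L\subset P}$ there is no general base-change isomorphism for $(i_{L\subset P})^!$. So neither ingredient of $\Res^G_{L\subset P}=(p_{L\subset P})_*\circ(i_{L\subset P})^!$ is known to commute with $\F\lotimes_\O(-)$, and your commuting square is unproved as stated.

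The fix is easy: use ${}'\Res^G_{L\subset P}=(p_{L\subset P})_!\circ(i_{L\subset P})^*$ instead. This functor is built from $(-)_!$ and $(-)^*$, which do commute with extension of scalars (this is exactly what the paper records in Remark~\ref{rmk:base-change}), and it characterizes cuspidality equally well by Proposition~\ref{prop:conditions-cuspidal}. With that substitution your positivity argument goes through cleanly: $d_G([\cF])$ is the class of the genuine perverse sheaf $\F\lotimes_\O\cF_\O$, hence a nonnegative combination of simple classes; applying the exact functor ${}'\Res^G_{L\subset P}$ gives a nonnegative combination summing to zero, so each term vanishes.

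The paper's own proof takes a slightly different route: it also switches to ${}'\Res^G_{L\subset P}$, but then works at the level of objects rather than Grothendieck groups. The point is that ${}'\Res^G_{L\subset P}$ preserves both $\Perv$ and $\Perv^+$ over $\O$, so ${}'\Res^G_{L\subset P}(\cF_\O)$ is torsion-free; since its generic fibre ${}'\Res^G_{L\subset P}(\cF)$ vanishes, it is itself zero, and hence so is ${}'\Res^G_{L\subset P}(\F\lotimes_\O\cF_\O)$. This avoids the Grothendieck-group detour and the positivity step entirely. Your approach has the virtue of not needing the $\Perv^+$ discussion; the paper's has the virtue of giving the vanishing of ${}'\Res^G_{L\subset P}(\F\lotimes_\O\cF_\O)$ directly as an object.
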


\begin{proof}
Let $P \subsetneq G$ be a parabolic subgroup, and let $L \subset P$ be a Levi factor. Let $\cF_{\O}$ be a torsion-free object in $\Perv_G(\cN_G,\O)$ such that $\cF \cong \K \otimes_{\O} \cF_{\O}$; then $\cG$ is a composition factor of $\F \lotimes_{\O} \cF_{\O}$. 

Recall~\cite[\S2.1]{juteau-aif} that Verdier duality does not preserve $\Perv_G(\cN_G,\O)$.  Rather, it takes $\Perv_G(\cN_G,\O)$ to the heart of a different $t$-structure on $\Db_G(\cN_G,\O)$, denoted $\Perv^+_G(\cN_G,\O)$.  An object in $\Perv_G(\cN_G,\O)$ is torsion-free if and only if it also lies in $\Perv^+_G(\cN_G,\O)$.  

Consider the functors $\Res^G_{L \subset P}, {}'\Res^G_{L \subset P}: \Db_G(\cN_G, \O) \to \Db_L(\cN_L,\O)$, defined by the same formulas as in~\S\ref{ss:cuspidal}.  
By~\cite[Proposition~4.7]{ahr}, $\Res^G_{L \subset P}$ sends $\Perv_G(\cN_G,\O)$ to $\Perv_L(\cN_L,\O)$.  It follows that ${}'\Res^G_{L \subset P}$ sends $\Perv^+_G(\cN_G,\O)$ to $\Perv^+_L(\cN_L,\O)$.  On the other hand, the same argument as in the proof of~\cite[Proposition~4.7]{ahr} shows that ${}'\Res^G_{L \subset P}$ sends $\Perv_G(\cN_G,\O)$ to $\Perv_L(\cN_L,\O)$.  In particular, the object ${}'\Res^G_{L \subset P}(\cF_{\O})$ of $\Perv_L(\cN_L,\O)$ is torsion-free.

As explained in Remark~\ref{rmk:base-change} below, we have natural isomorphisms
\begin{equation} \label{eqn:tensor}
\begin{split}
\F \lotimes_{\O} {}'\Res^G_{L \subset P}(\cF_{\O}) &\cong {}'\Res^G_{L \subset P}(\F \lotimes_{\O} \cF_{\O}),\\
\K \otimes_{\O} {}'\Res^G_{L \subset P}(\cF_{\O}) &\cong {}'\Res^G_{L \subset P}(\K \otimes_{\O} \cF_{\O}) \cong {}'\Res^G_{L \subset P}(\cF).
\end{split}
\end{equation} 
Since $\cF$ is cuspidal, the second isomorphism in~\eqref{eqn:tensor} gives $\K \otimes_{\O} {}'\Res^G_{L \subset P}(\cF_{\O}) = 0$, which implies, by torsion-freeness, that ${}'\Res^G_{L \subset P}(\cF_{\O}) = 0$. Hence the first isomorphism in~\eqref{eqn:tensor} gives ${}'\Res^G_{L \subset P}(\F \lotimes_{\O} \cF_{\O})=0$, which implies that ${}'\Res^G_{L \subset P} (\cG) = 0$.
\end{proof}

\begin{rmk} \label{rmk:base-change}
In~\eqref{eqn:tensor} we used the fact that if $R$ and $S$ are noetherian commutative rings of finite global dimension, if $\phi : R \to S$ is a ring morphism and if $f : X \to Y$ is a morphism of locally compact topological spaces, then there exist natural isomorphisms of functors
\begin{equation} \label{eqn:ks}
(S \, \lotimes_R \, \cdot) \circ f_! \cong f_! \circ (S \, \lotimes_R \, \cdot), \qquad (S \, \lotimes_R \, \cdot) \circ f^* \cong f^* \circ (S \, \lotimes_R \, \cdot).
\end{equation}
The first isomorphism follows from~\cite[Proposition 2.6.6]{ks}, and the second from~\cite[Proposition 2.6.5]{ks}. These isomorphisms imply that any operation on the derived categories $\Db_G(\cN_G,\bk)$ that is obtained by composing functors of the form $f_!$ and $f^*$ commutes with modular reduction. 
\end{rmk}

\subsection{An example of modular reduction}

In this subsection, $\bk$ has characteristic $\ell>0$. Let $\K$ be a finite extension of $\Ql$ containing all the $n$-th roots of unity, and let $(\K,\O,\F)$ be the resulting $\ell$-modular system $(\K,\O,\F)$.

Consider the group $\mathrm{SL}(n)$, and its principal nilpotent orbit $\cO_{(n)}$. The component group $A_{\mathrm{SL}(n)}(x)$ of the centralizer of any element $x \in \cO_{(n)}$ is isomorphic to the group $\mu_n$ of complex $n$-th roots of unity; hence its group of characters with values in $\K$ is (non-canonically) isomorphic to the group of $n$-th roots of unity in $\K$.

\begin{lem}
\label{lem:primitive-cuspidal}
Let $\mathcal{E}$ be a rank-one $\mathrm{SL}(n)$-equivariant $\K$-local system on $\cO_{(n)}$ associated with a \emph{primitive} $n$-th root of unity in $\K$. Then $(\cO_{(n)},\mathcal{E})$ is a cuspidal pair for $\mathrm{SL}(n)$.
\end{lem}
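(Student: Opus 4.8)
The plan has two layers. A short proof is already available: $\K$ has characteristic $0$ and, by hypothesis, contains all $n$-th roots of unity, while every component group $A_{\mathrm{SL}(n)}(x)$ (for $x\in\cN_{\mathrm{SL}(n)}$) is cyclic of order dividing $n$; hence all irreducible representations of all these groups are defined over $\K$, so by the observations recalled after the proof of Proposition~\ref{prop:sufficient} the set of cuspidal pairs for $\mathrm{SL}(n)$ over $\K$ is the one Lusztig classified over $\Qlb$ in~\cite{lusztig}, and $(\cO_{(n)},\mathcal{E})$ lies on that list. I will nonetheless sketch a direct argument, since it exposes the geometry. By Proposition~\ref{prop:conditions-cuspidal} it suffices to show that ${}'\Res^{\mathrm{SL}(n)}_{L\subset P}\bigl(\IC(\cO_{(n)},\mathcal{E})\bigr)=0$ for every proper parabolic $P$ with Levi factor $L$; by Proposition~\ref{prop:sufficient} it is enough to verify condition~\eqref{eqn:cuspidal-lusztig} for every $L$-orbit $\mathscr{C}\subset\cN_L$ and every $x\in\mathscr{C}$.

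First I would carry out the elementary reductions. Since $\cO_{(n)}$ is the maximal nilpotent orbit, if $p_{L\subset P}^{-1}(x)\cap\cO_{(n)}\neq\emptyset$ then $\cO_{(n)}$ meets $\mathscr{C}+\fu_P$, so $\cO_{(n)}$ lies in the closure of the $G$-orbit induced by $\mathscr{C}$; maximality forces that induced orbit to be $\cO_{(n)}$, hence $\mathscr{C}$ to be the regular orbit $\mathscr{C}^{L}_{\mathrm{reg}}$ of $L$. So assume $\mathscr{C}=\mathscr{C}^{L}_{\mathrm{reg}}$ and $x$ regular nilpotent in $\fl$. Then $p_{L\subset P}^{-1}(x)=x+\fu_P$ (an element of $\fp$ is nilpotent if and only if its image in $\fl$ is), so $Z:=(x+\fu_P)\cap\cO_{(n)}$ is open in the affine space $x+\fu_P$; it is nonempty --- hence dense, hence smooth connected of complex dimension $\dim\fu_P$ --- because the orbit induced by $\mathscr{C}^{L}_{\mathrm{reg}}$ is $\cO_{(n)}$ (the locus of $x'\in\mathscr{C}^{L}_{\mathrm{reg}}$ for which $(x'+\fu_P)\cap\cO_{(n)}$ is dense in $x'+\fu_P$ is $L$-stable and nonempty, so it equals $\mathscr{C}^{L}_{\mathrm{reg}}$). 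As $\dim\cO_{(n)}-\dim\mathscr{C}^{L}_{\mathrm{reg}}=\dim G-\dim L=2\dim\fu_P$, the group in~\eqref{eqn:cuspidal-lusztig} is $\mathsf{H}_c^{2\dim\fu_P}(Z,\mathcal{E})$, its top compactly-supported cohomology; by Poincar\'e--Verdier duality this is dual to the invariants of $\mathcal{E}^{\vee}$ under $\pi_1(Z)$. Since $\mathcal{E}$ has rank one, it vanishes if and only if $\mathcal{E}|_Z$ is a nontrivial local system, i.e.\ if and only if the monodromy map $\pi_1(Z)\to\pi_1(\cO_{(n)})\cong A_{\mathrm{SL}(n)}(y)\cong\mu_n$ (the first isomorphism because $\mathrm{SL}(n)$ is simply connected) has image not contained in the kernel of the primitive character defining $\mathcal{E}$; and since that character is faithful, it suffices that the image be nontrivial.

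The whole lemma thus reduces to the monodromy claim: for $x$ regular nilpotent in a proper Levi $\fl$ of $\mathrm{SL}(n)$, the map $\pi_1(Z)\to\pi_1(\cO_{(n)})\cong\mu_n$ is nonzero. This is the hard part. I would argue that the complement $(x+\fu_P)\setminus Z$ contains a divisor a generic point of which is a subregular nilpotent element (lying in $\cO_{(n-1,1)}$), and that a small loop encircling such a divisor maps, under $Z\hookrightarrow\cO_{(n)}$, to a nontrivial element of $\pi_1(\cO_{(n)})\cong\mu_n$. Geometrically the latter reflects the fact that the transverse slice to $\cO_{(n-1,1)}$ inside $\overline{\cO_{(n)}}=\cN_{\mathrm{SL}(n)}$ is a Kleinian singularity of type $A_{n-1}$, whose smooth locus has fundamental group $\mu_n$ mapping isomorphically onto $\pi_1(\cO_{(n)})$; it is essentially the geometric input of~\cite[\S2]{lusztig}. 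I expect the main obstacle to be exactly this step --- verifying that $(x+\fu_P)\setminus Z$ does contain such a divisorial component and that the corresponding meridian is not killed by $\mathcal{E}$ (a question about the multiplicity of that component) --- since everything preceding it is formal; in the write-up one could either carry this out by hand or invoke the corresponding assertions from~\cite{lusztig}.
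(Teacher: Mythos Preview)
Your short proof is exactly what the paper does: it observes that over a characteristic-zero field $\K$ containing enough roots of unity the cuspidal pairs for $\mathrm{SL}(n)$ are those classified by Lusztig, and cites~\cite[(10.3.2)]{lusztig}. The paper adds only the remark that this is in fact the \emph{easy} direction of Lusztig's classification.

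Your longer direct argument goes beyond the paper and essentially reconstructs Lusztig's own proof of that direction. The reductions you make are all correct: only the regular $L$-orbit contributes; for regular $x$ the fibre $p_{L\subset P}^{-1}(x)=x+\fu_P$ meets $\cO_{(n)}$ in a dense open $Z$; the relevant cohomology degree is the top one, so Poincar\'e duality turns the vanishing into a monodromy question; and faithfulness of the primitive character reduces this to showing the image of $\pi_1(Z)$ in $\mu_n$ is nontrivial. You correctly isolate the one nontrivial step---the monodromy claim---and your proposed approach via a subregular divisor and the $A_{n-1}$ transverse slice is precisely the geometric input Lusztig uses in~\cite[\S2]{lusztig}. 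So what you gain from the direct argument is a self-contained explanation of \emph{why} the pair is cuspidal (the primitive character survives restriction to every relevant fibre), at the cost of having to verify that monodromy computation, which in practice one still ends up citing from~\cite{lusztig}.
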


\begin{proof}
This is one direction (in fact, the easy direction) of Lusztig's classification of cuspidal pairs for $\mathrm{SL}(n)$ in characteristic zero~\cite[(10.3.2)]{lusztig}. 
\end{proof}

Now consider $\GL(n)$. We have $\cN_{\GL(n)}=\cN_{\SL(n)}$, and the orbits are the same; however, for $x\in\cO_{(n)}$, the component group $A_{\mathrm{GL}(n)}(x)$ is trivial. So the only irreducible $\GL(n)$-equivariant $\bk$-local system on $\cO_{(n)}$ is the constant sheaf $\underline{\bk}$.

\begin{prop}
\label{prop:constant-cuspidal}
If $n$ is a power of $\ell$, then $(\cO_{(n)},\underline{\bk})$ is a cuspidal pair for $\GL(n)$.
\end{prop}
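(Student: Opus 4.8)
The plan is to deduce this from the characteristic-zero cuspidal pair for $\SL(n)$ (Lemma~\ref{lem:primitive-cuspidal}) via modular reduction, using Proposition~\ref{prop:modular-reduction-cuspidal}. First I would set $n = \ell^k$ and pick a primitive $n$-th root of unity $\zeta \in \K$ (here $\K$ contains all $n$-th roots of unity by hypothesis), giving a rank-one $\SL(n)$-equivariant $\K$-local system $\cE$ on $\cO_{(n)}$ associated with $\zeta$; by Lemma~\ref{lem:primitive-cuspidal} the pair $(\cO_{(n)},\cE)$ is cuspidal for $\SL(n)$. The key point is that the functors $\Res$, ${}'\Res$ for $\GL(n)$ and $\SL(n)$ agree: since $\cN_{\GL(n)} = \cN_{\SL(n)}$, and any parabolic $P \subsetneq \GL(n)$ with Levi $L$ restricts to a parabolic $P \cap \SL(n) \subsetneq \SL(n)$ with Levi $L \cap \SL(n)$ (and conversely every proper parabolic of $\SL(n)$ arises this way), the diagram~\eqref{eqn:diagram-restriction-N} is literally the same for the two groups up to the (inessential) difference in equivariance structure. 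Hence $\IC(\cO_{(n)},\cE)$, viewed appropriately, is a cuspidal simple object for $\GL(n)$ as well — but it is a $\K$-sheaf, so I must pass to characteristic $\ell$.

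Next I would carry out the modular reduction. Choose an $\O$-free $\GL(n)$-equivariant (equivalently here, $\SL(n)$-equivariant with appropriate descent) local system $\cE_\O$ on $\cO_{(n)}$ with $\cE \cong \K \otimes_\O \cE_\O$, coming from the order-$n$ character of $\mu_n \cong A_{\SL(n)}(x)$ valued in $n$-th roots of unity in $\O$. The crucial arithmetic input is that, because $n = \ell^k$, the reduction mod $\ell$ of a primitive $n$-th root of unity in $\O$ is $1$ in $\F$: the $\ell^k$-th cyclotomic polynomial reduces to $(T-1)^{\ell^{k-1}(\ell-1)}$ mod $\ell$ (or more simply, $\mu_{\ell^k}$ has trivial image in $\F^\times$ since that group has order prime to $\ell$). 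Therefore the rank-one $\F$-local system $\F \otimes_\O \cE_\O$ is the \emph{trivial} local system $\ubk$ on $\cO_{(n)}$ — or at worst has $\ubk$ as a composition factor, which by the discussion at the end of \S\ref{ss:modular-reduction} is all that is needed. Consequently $\IC(\cO_{(n)},\ubk)$ occurs in the modular reduction of the cuspidal simple $\K$-object $\IC(\cO_{(n)},\cE)$.

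Finally, applying Proposition~\ref{prop:modular-reduction-cuspidal} (with $G = \GL(n)$) to the pair $\cF = \IC(\cO_{(n)},\cE)$ and its composition factor $\cG = \IC(\cO_{(n)},\ubk)$ in its modular reduction immediately yields that $\IC(\cO_{(n)},\ubk)$ is cuspidal, i.e.\ $(\cO_{(n)},\ubk)$ is a cuspidal pair for $\GL(n)$.

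The step I expect to be the main obstacle is the careful comparison of the restriction functors and cuspidality between $\SL(n)$ and $\GL(n)$: one must check that the $\GL(n)$-equivariant $\IC$-sheaf $\IC(\cO_{(n)},\cE)$ (with $\cE$ the $\SL(n)$-equivariant local system, which does descend through the isogeny since $\GL(n) = \SL(n) \cdot Z$ acts on $\cO_{(n)}$ with $\SL(n)$ already transitive and the center acting trivially on the orbit) is genuinely a simple object of $\Perv_{\GL(n)}(\cN_{\GL(n)},\K)$ and is cuspidal in the $\GL(n)$ sense — the subtlety being whether every proper parabolic of $\GL(n)$ gives a proper parabolic of $\SL(n)$ and vice versa, and that $\Res^G_{L\subset P}$ is insensitive to whether we compute it over $\GL(n)$ or $\SL(n)$. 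In fact, since $\cN_G$ and the relevant diagram~\eqref{eqn:diagram-restriction-N} depend only on the adjoint action, this reduces to a routine verification, and the only genuine content is the cyclotomic reduction fact $\zeta \mapsto 1$ for $\zeta$ a primitive $\ell^k$-th root of unity, which is elementary.
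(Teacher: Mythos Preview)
Your overall strategy---modular reduction of Lusztig's characteristic-zero cuspidal pair for $\SL(n)$, together with the observation that $\Res$ for $\GL(n)$ and $\SL(n)$ coincide on the nilpotent cone---is exactly the paper's approach. However, there is a genuine gap in the order in which you apply these ingredients.

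You claim that $\IC(\cO_{(n)},\cE)$ is a simple object of $\Perv_{\GL(n)}(\cN_{\GL(n)},\K)$, arguing that $\cE$ ``descends through the isogeny since $\GL(n)=\SL(n)\cdot Z$ acts on $\cO_{(n)}$ with $\SL(n)$ already transitive and the centre acting trivially on the orbit''. This is false. The centre $Z(\GL(n))=\C^\times$ acts trivially on $\cN$, so $\GL(n)$-equivariant local systems on $\cO_{(n)}$ are the same as $\mathrm{PGL}(n)$-equivariant ones; equivalently, they are $\SL(n)$-equivariant local systems on which the central $\mu_n\subset\SL(n)$ acts trivially. But $\cE$ is \emph{defined} by a primitive (hence nontrivial) character of $\mu_n\cong A_{\SL(n)}(x)$, so $\cE$ is not $\GL(n)$-equivariant. (Concretely, for $x\in\cO_{(n)}$ the centraliser $\GL(n)_x$ is the group of invertible polynomials in $x$, which is connected; hence $A_{\GL(n)}(x)=1$, and the only irreducible $\GL(n)$-equivariant local system on $\cO_{(n)}$ is $\ubk$.) You therefore cannot invoke Proposition~\ref{prop:modular-reduction-cuspidal} with $G=\GL(n)$ and $\cF=\IC(\cO_{(n)},\cE)$.

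The fix is simply to swap the order of the two passages: apply Proposition~\ref{prop:modular-reduction-cuspidal} with $G=\SL(n)$ (where $\cE$ \emph{is} equivariant) to deduce that $(\cO_{(n)},\underline{\F})$ is cuspidal for $\SL(n)$; then observe that $\underline{\F}$ \emph{is} $\GL(n)$-equivariant and that the cuspidality condition (say condition~(1) of Proposition~\ref{prop:conditions-cuspidal}) is literally the same computation for $\SL(n)$ and $\GL(n)$. This is precisely how the paper proceeds. Your cyclotomic-reduction argument and your comparison of restriction functors are both fine; only the placement of the $\SL(n)\leadsto\GL(n)$ transfer needs to move from before the modular reduction to after it.
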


\begin{proof}
The conclusion does not depend on the choice of the field of characteristic $\ell$, so it is enough to prove it when $\bk=\F$. Let $\mathcal{E}$ be as in Lemma \ref{lem:primitive-cuspidal}. The modular reduction of $\mathcal{E}$ is $\underline{\F}$ (since $1$ is the only $n$-th root of unity in $\F$), so $\IC(\cO_{(n)},\underline{\F})$ occurs in the modular reduction of $\IC(\cO_{(n)},\mathcal{E})$. By Proposition~\ref{prop:modular-reduction-cuspidal}, we deduce that $(\cO_{(n)},\underline{\F})$ is a cuspidal pair for $\SL(n)$. However it is also a pair for $\GL(n)$, and the property of being cuspidal is the same whether we consider the pair for $\SL(n)$ or for $\GL(n)$ (see e.g.~condition $(1)$ in Proposition \ref{prop:conditions-cuspidal}).
\end{proof}

\section{Generalized Springer correspondence for $\GL(n)$}
\label{sec:mainthm}

For the remainder of this paper, we set $G=\GL(n)$ and assume that $\ell>0$.

\subsection{Combinatorics for the general linear group}
\label{ss:combinatorics}

In this subsection, we fix notation for partitions, representations, and nilpotent orbits.

\subsubsection{Compositions and partitions}

Let $\Comp$ denote the set of sequences of nonnegative integers with finitely many nonzero terms.  Elements of $\Comp$ are sometimes called \emph{compositions}.  For $\sa = (\sa_1, \sa_2, \ldots) \in \Comp$, let $\|\sa\| = \sum_{i=1}^\infty \sa_i$.  Given $\sa, \sfb \in \Comp$ and $k \in \N$, we can form the sum $\sa + \sfb$ and the product $k\sa$.

For $m \in \N$, let $\Part(m)$ denote the set of \emph{partitions} of $m$.  We identify $\Part(m)$ with the subset of $\Comp$ consisting of decreasing sequences $\lambda$ with $\|\lambda\| = m$.  For $\lambda \in \Part(m)$, $\mu \in \Part(m')$ and $k \in \N$, the sum $\lambda + \mu$ and the product $k\lambda$ are defined as above, via this identification.  For $\lambda \in \Part(m)$, let $\sm(\lambda) = (\sm_1(\lambda), \sm_2(\lambda), \ldots)$ be the composition in which $\sm_i(\lambda)$ is the multiplicity of $i$ in $\lambda$. We write $\lambda^\tr$ for the transpose partition, defined by the property that $\lambda_i^\tr-\lambda_{i+1}^\tr=\sm_i(\lambda)$ for all $i$.

Let $\Part_\ell(m) \subset \Part(m)$ be the set of \emph{$\ell$-regular partitions}, i.e., partitions in which $\sm_i(\lambda) < \ell$ for all $i$.  On the other hand, let $\Part(m,\ell) \subset \Part(m)$ be the set of partitions all of whose parts are powers of $\ell$: that is, $\sm_i(\lambda) = 0$ unless $i = \ell^j$ for some $j \ge 0$. For $\sa \in \Comp$, we define
\[
\uPart(\sa) = \prod_{i \ge 1} \Part(\sa_i)
\qquad\text{and}\qquad
\uPart_\ell(\sa) = \prod_{i \ge 1} \Part_\ell(\sa_i).
\]
Finally, let $[\sa] \in \uPart(\sa)$ be the element whose $i$-th component is the partition $(\sa_i) \in \Part(\sa_i)$.

\subsubsection{Representations of symmetric groups}

Next, consider the symmetric group $\fS_m$.  For $\lambda \in \Part(m)$, let $S^\lambda$ denote the Specht module for $\bk[\fS_m]$ corresponding to the partition $\lambda$.  If $\lambda \in \Part_\ell(m)$, we also let $D^\lambda$ denote the unique irreducible quotient of $S^\lambda$.  Recall that every irreducible $\bk[\fS_m]$-module arises in this way.  The partition $(m)$ is always $\ell$-regular, and $D^{(m)}$ is the trivial representation.

More generally, for $\sa \in \Comp$, let $\fS_\sa = \prod_{i \ge 1} \fS_{\sa_i}$.  For $\blambda = (\lambda^{(1)}, \lambda^{(2)}, \ldots)$ in $\uPart(\sa)$ (resp.~$\uPart_\ell(\sa)$), we can form the $\bk[\fS_\sa]$-module
\[
S^\blambda = S^{\lambda^{(1)}} \boxtimes S^{\lambda^{(2)}} \boxtimes \cdots
\qquad\text{resp.}\qquad
D^\blambda = D^{\lambda^{(1)}} \boxtimes D^{\lambda^{(2)}} \boxtimes \cdots.
\]
When $\blambda \in \uPart_\ell(\sa)$, $D^\blambda$ is the unique irreducible quotient of $S^\blambda$, and every irreducible $\bk[\fS_\sa]$-module arises in this way.  The trivial representation is $D^{[\sa]}$.

\subsubsection{Levi subgroups}

For each $\nu \in \Part(n)$, let $\bL_\nu$ denote the conjugacy class of Levi subgroups of $G$ that are isomorphic to
\[
\GL(\nu_1) \times \GL(\nu_2) \times \cdots.
\]
We will sometimes choose a representative Levi subgroup $L \in \bL_\nu$, and then consider the `relative Weyl group'
\[
W_\nu := N_G(L)/L \cong \fS_{\sm(\nu)}.
\]
The last isomorphism depends on the choice of $L$, but only up to an inner automorphism.  Thus, the set $\Irr(\bk[W_\nu])$ does not depend on the choice of $L$, and we have canonical bijections
\begin{equation}\label{eqn:irrw-identify}
\Irr(\bk[W_\nu]) = \Irr(\bk[\fS_{\sm(\nu)}]) = \uPart_\ell(\sm(\nu)).
\end{equation}

\subsubsection{Nilpotent orbits}

Let us label nilpotent orbits in $\cN_G$ by the set $\Part(n)$ of partitions of $n$ in the standard way, so that the partition $(n)$ corresponds to the principal nilpotent orbit, and $(1,\ldots,1)$ corresponds to the trivial orbit. If $\lambda \in \Part(n)$, we denote by $\cO_\lambda$ the corresponding orbit. It is well known that every $G$-equivariant local system on any $\cO_\la$ is constant, so we have a canonical bijection
\begin{equation}\label{eqn:pervn-identify}
\Irr(\Perv_G(\cN_G,\bk)) = \Part(n).
\end{equation}
For simplicity, we write $\IC_\la$ for $\IC(\cO_\la, \underline{\bk})$.

More generally, given $\nu \in \Part(n)$ and a representative $L \in \bL_\nu$, we can identify the set of nilpotent orbits in $\cN_L$ with $\uPart(\nu)$, and hence obtain a bijection
\begin{equation}
\label{eqn:bijection-orbits-N}
\Irr(\Perv_L(\cN_L,\bk)) = \uPart(\nu).
\end{equation}
For $\blambda \in \uPart(\nu)$, the corresponding orbit and perverse sheaf are denoted by $\cO_\blambda$ and $\IC_\blambda$, respectively.  In particular, $\cO_{[\nu]}$ is the principal orbit in $\cN_L$. Note that the bijection \eqref{eqn:bijection-orbits-N} is not canonical: it depends on a choice of isomorphism $L \cong \GL(\nu_1) \times \GL(\nu_2) \times \cdots$. However if $\blambda \in \uPart(\nu)$ satisfies the condition $\nu_i=\nu_j \Rightarrow \lambda^{(i)}=\lambda^{(j)}$ (e.g.~if $\blambda=[\nu]$) then $\cO_\blambda \subset L$ is canonically defined. This is the only case we will consider.

\subsection{Statements of the main results}
\label{subsect:statements-main}

Our first main result is a classification of the modular cuspidal pairs in $\GL(n)$.

\begin{thm}
\label{thm:cuspidals-GL-new}
The group $\GL(n)$ admits a cuspidal pair if and only if $n$ is a power of $\ell$. In that case, there exists a unique cuspidal pair, namely $(\cO_{(n)}, \ubk)$.
\end{thm}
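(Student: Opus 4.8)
The plan is to split the statement into an existence part and a uniqueness/non-existence part, using Proposition~\ref{prop:constant-cuspidal} for the `if' direction and the restriction functors together with modular reduction for the `only if' direction.

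First, for the `if' direction, suppose $n = \ell^k$ is a power of $\ell$. Then Proposition~\ref{prop:constant-cuspidal} already tells us that $(\cO_{(n)}, \ubk)$ is a cuspidal pair for $\GL(n)$, so existence is immediate. Since every $G$-equivariant local system on every nilpotent orbit of $\GL(n)$ is constant (by~\eqref{eqn:pervn-identify}), the only possible cuspidal pairs are of the form $(\cO_\la, \ubk)$ for $\la \in \Part(n)$; so the content of the uniqueness claim is that $\IC_\la$ is \emph{not} cuspidal whenever $\la \neq (n)$, and the content of the `only if' claim is that $\IC_\la$ is not cuspidal for \emph{any} $\la$ when $n$ is not a power of $\ell$.

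The key step, therefore, is to show: if $\la \in \Part(n)$ and $\la \neq (n)$, then there exists a proper Levi $L \subset P \subsetneq G$ with $\Res^G_{L \subset P}(\IC_\la) \neq 0$. I would do this by a direct geometric analysis of the restriction functor on the orbit $\cO_\la$, or more efficiently by modular reduction combined with Lusztig's characteristic-zero classification: over $\K$, the only cuspidal pair for $\GL(n)$ (all component groups being trivial) is $(\cO_{(1^n)}, \ubk_{\mathrm{pt}})$ when $n = 1$, i.e.\ $\GL(n)$ has no cuspidal pair for $n > 1$ in characteristic zero. Actually the cleaner route is the following dimension-counting argument, essentially Lusztig's: by Proposition~\ref{prop:conditions-cuspidal}(1$'$) and Proposition~\ref{prop:sufficient} (or a direct computation), $\IC_\la$ cuspidal forces, for every proper Levi $L$ and every $x \in \cN_L$, a vanishing of compactly-supported cohomology of $p_{L\subset P}^{-1}(x) \cap \cO_\la$ in the top relevant degree; taking $L$ of semisimple corank one (a maximal Levi $\GL(a) \times \GL(b)$ with $a + b = n$, $a, b \geq 1$) and $x = 0$, the fibre $p_{L \subset P}^{-1}(0) \cap \cO_\la$ is a union of affine bundles over partial flag varieties whose top cohomology one can compute, and this is nonzero unless $\la = (n)$. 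Feeding this through, a cuspidal $\IC_\la$ must have $\la = (n)$.

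Finally, to finish the `only if' direction one must show that even $\IC_{(n)}$ fails to be cuspidal when $n$ is not a power of $\ell$. Here I would argue by modular reduction in reverse: write $n = \ell^k m$ with $\gcd(m,\ell)=1$ and $m > 1$, take $L \in \bL_\nu$ with $\nu$ a partition of $n$ chosen so that $L \cong \GL(\ell^k)^{\times m}$ (a proper Levi since $m > 1$), and compute $\Res^G_{L \subset P}(\IC_{(n)})$; one expects this to be nonzero, for instance because $\Ind^G_{L \subset P}$ of the (cuspidal, by Proposition~\ref{prop:constant-cuspidal}) object $\IC_{[\nu]}$ on $L$ has $\IC_{(n)}$ in its head — indeed $\Ind$ preserves principal orbits by Corollary~\ref{cor:induced-orbit}(1) and is nonzero by Corollary~\ref{cor:induced-orbit}(2) — so by adjunction $\Res^G_{L\subset P}(\IC_{(n)}) \neq 0$, contradicting cuspidality. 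The main obstacle I anticipate is the explicit computation of $\Res^G_{L \subset P}(\IC_\la)$ (equivalently the top cohomology of the fibres $p_{L \subset P}^{-1}(x) \cap \cO_\la$) with \emph{modular} coefficients: one must be careful that dimension estimates suffice and that no characteristic-$\ell$ cancellation destroys the non-vanishing one needs; using the adjunction with $\Ind$ and Corollary~\ref{cor:induced-orbit} sidesteps most of this, which is why I would lean on that route rather than on raw cohomology computations.
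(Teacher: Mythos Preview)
Your `if' direction is fine: Proposition~\ref{prop:constant-cuspidal} indeed shows that $(\cO_{(n)},\ubk)$ is cuspidal when $n$ is a power of~$\ell$. The remaining two parts, however, both contain genuine gaps.

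For the uniqueness claim you argue that cuspidality of $\IC_\la$ forces the Lusztig-type vanishing $\mathsf{H}_c^{\dim(\cO_\la)-\dim(\mathscr{C})}(p_{L\subset P}^{-1}(x)\cap\cO_\la,\ubk)=0$, citing Proposition~\ref{prop:sufficient}. But that proposition gives only the implication \emph{vanishing}~$\Rightarrow$~${}'\Res=0$; the converse, which is what you need, is false over~$\bk$. Remark~\ref{rk:cuspidal-GL2} makes this explicit: for $\GL(2)$ in characteristic~$2$ the pair $(\cO_{(2)},\ubk)$ is cuspidal, yet $\mathsf{H}_c^2(p_{T\subset B}^{-1}(0)\cap\cO_{(2)},\ubk)\neq 0$. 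So this route cannot be repaired in the modular setting.

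For the `only if' direction your adjunction argument also fails, already for $n=\ell+1$. There $\gcd(n,\ell)=1$, so your chosen Levi is the maximal torus and $\Ind^G_{T\subset B}(\ubk)$ is the Springer sheaf. By the explicit modular Springer correspondence for $\GL(n)$ \cite[Theorem~6.4.1]{juteau}, the simple quotients of the Springer sheaf are the $\IC_{\mu}$ with $\mu^\tr\in\Part_\ell(n)$; for $\mu=(n)$ this requires $(1^n)$ to be $\ell$-regular, i.e.\ $n<\ell$, which fails. More generally, Corollary~\ref{cor:induced-orbit} tells you only that $\Ind^G_{L\subset P}(\IC_{[\nu]})$ is nonzero with support $\overline{\cO_{(n)}}$, hence that $\IC_{(n)}$ is a \emph{composition factor}; this does not place it in the head (or socle), so neither adjunction yields $\Res^G_{L\subset P}(\IC_{(n)})\neq 0$. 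You correctly anticipated trouble with direct cohomology computations, but the adjunction does not sidestep it.

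The paper's proof is quite different and avoids both obstacles: Theorems~\ref{thm:cuspidals-GL-new} and~\ref{thm:main} are proved \emph{simultaneously} by induction on~$n$. Using the induction hypothesis on proper Levi subgroups, the Fourier--Sato transform identifies the simple quotients of each $\cI_\nu$ (for $\nu\in\Part(n,\ell)$, $\nu\neq(n)$) with $\Irr(\bk[W_\nu])$, realised as local systems on \emph{disjoint} Lusztig strata $Y_{[\nu]}$ (Lemma~\ref{lem:induction-cuspidal-new}); the combinatorial bijection of Lemma~\ref{lem:comb} then forces the assembled map $\Psi$ to be surjective when $n$ is not a power of~$\ell$ (so nothing is cuspidal) and to miss exactly one simple when $n$ is a power of~$\ell$ (so the known cuspidal $\IC_{(n)}$ is the unique one). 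It is a global counting argument rather than an orbit-by-orbit calculation.
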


\begin{rmk}
\label{rmk:consequences-thm}
Theorem \ref{thm:cuspidals-GL-new} provides the classification of cuspidal simple perverse sheaves in $\Perv_{\GL(n)}(\cN_{\GL(n)},\bk)$. The classification of \emph{supercuspidal} simple perverse sheaves (in the sense of Remark \ref{rk:cuspidal}(3)) is different and much simpler: in fact there is no supercuspidal simple perverse sheaf in $\Perv_{\GL(n)}(\cN_{\GL(n)},\bk)$ unless $n=1$. Even more is true: every simple object of $\Perv_{\GL(n)}(\cN_{\GL(n)},\bk)$ appears as a composition factor of the Springer sheaf $\Spr:=\Ind^{G}_{T\subset B}(\underline{\bk}_{\cN_T})$. (Here $T$ is a maximal torus and $B$ a Borel subgroup in $G=\GL(n)$.) Indeed it is sufficient to prove this property when $\bk=\Fl$. Now, the modular reduction (in the sense of \S\ref{ss:modular-reduction}) of the class of the Springer sheaf $[\Spr^{\Ql}]$ over $\Ql$ is the class of the Springer sheaf $[\Spr^{\Fl}]$ over $\Fl$. On the other hand every simple object $\IC(\cO_\la, \underline{\Ql})$ in $\Perv_{\GL(n)}(\cN_{\GL(n)},\Ql)$ is a direct summand of $\Spr^{\Ql}$; hence $\IC(\cO_\la, \underline{\Fl})$ occurs in the modular reduction of $\Spr^{\Ql}$.
\end{rmk}

Let $\fL = \{ \bL_\nu \mid \nu \in \Part(n,\ell) \}$.  An immediate consequence of Theorem~\ref{thm:cuspidals-GL-new} is that $\fL$ is precisely the set of conjugacy classes of Levi subgroups admitting a cuspidal pair, and that the unique cuspidal perverse sheaf for $L\in\bL_\nu$ is $\IC_{[\nu]}$.  In view of this observation, the following theorem can be regarded as a modular analogue of Lusztig's generalized Springer correspondence~\cite{lusztig}.

In this statement, for each $\nu \in \Part(n,\ell)$ we choose some $L \in \bL_\nu$ and some parabolic subgroup $P$ of $G$ having $L$ as a Levi factor, and set $\cI_\nu=\Ind_{L\subset P}^G(\IC_{[\nu]})$. We will see in the course of the proof that this object does not depend (up to isomorphism) on the choice of $L$ or of $P$.

\begin{thm}\label{thm:main}
For any $\lambda \in \Part(n)$, the perverse sheaf $\IC_\lambda$ appears in the head of $\cI_\nu$ for a unique $\nu\in\Part(n,\ell)$.
Moreover, for any $\nu\in\Part(n,\ell)$, we have a natural bijection
\[
\psi_\nu: \Irr(\bk[W_\nu]) \simto \left\{
\begin{array}{c}
\text{isomorphism classes of} \\
\text{simple quotients of $\cI_\nu$}
\end{array}
\right\}.
\]
In this way we obtain a bijective map
\[
\Psi = \bigsqcup_{\bL_\nu \in \fL} \psi_\nu: \bigsqcup_{\bL_\nu \in \fL} \Irr(\bk[W_\nu]) \simto \Irr(\Perv_G(\cN_G,\bk)).
\]
\end{thm}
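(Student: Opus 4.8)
The plan is to combine Lusztig's characteristic-zero generalized Springer correspondence with modular reduction, using the Fourier--Sato transform to control everything on the Lie algebra side where the strata $Y_{(L,\cO)}$ organize the simple perverse sheaves cleanly. First I would record the characteristic-zero picture: by Proposition~\ref{prop:lusztig-cuspidal} and Corollary~\ref{cor:induction-IC}, for a cuspidal pair $(\cO,\cE)$ of a Levi $L$ (in characteristic zero these are exactly the maximal tori $T\in\bL_{(1^n)}$, with $\cO=\{0\}$, $\cE=\ubk$), the Fourier--Sato transform of $(a_G)_!\Ind^G_{T\subset B}(\ubk)$ is $\IC(Y_{(T,0)}, (\xymap)_*\ubk)$, whose endomorphism ring is $\bk[W_{(1^n)}]=\bk[\fS_n]$; the simple summands are indexed by $\Irr(\K[\fS_n])$, i.e.\ by $\Part(n)$, recovering the ordinary Springer correspondence over $\K$. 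This gives $\Psi$ over $\K$ as a bijection, and by counting it shows $\Irr(\Perv_G(\cN_G,\K))=\Part(n)=\Irr(\Perv_G(\cN_G,\F))$.

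Next I would pass to characteristic $\ell$. By Theorem~\ref{thm:cuspidals-GL-new}, $\fL=\{\bL_\nu\mid \nu\in\Part(n,\ell)\}$, and for $L\in\bL_\nu$ the unique cuspidal perverse sheaf is $\IC_{[\nu]}=\IC(\cO_{[\nu]},\ubk)$. The object $\cI_\nu=\Ind^G_{L\subset P}(\IC_{[\nu]})$ is independent of $P$ and of the representative $L$ by the remark after Corollary~\ref{cor:induction-IC} (its Fourier--Sato transform is an $\IC$-sheaf on the stratum $Y_{(L,\cO_{[\nu]}')}$, which depends only on the conjugacy class $\bL_\nu$). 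The key computation is the endomorphism ring $\End(\cI_\nu)$: applying $\bT_\fg\circ(a_G)_!$ and Corollary~\ref{cor:induction-IC}, this equals $\End\bigl(\IC(Y_{(L,\cO')},(\xymap)_*\widetilde{\cE'})\bigr)$, where $(\cO',\cE')$ is the Fourier-transformed cuspidal pair; since $\GL(\ell^k)$ has a cuspidal pair concentrated on a single orbit with $\cE=\ubk$ (Theorem~\ref{thm:cuspidals-GL-new}), one checks $(\cO',\cE')=(\cO,\cE)$ here, the covering $\xymap_{(L,\cO_{[\nu]})}$ is Galois with group $N_G(L)/L\cong W_\nu\cong\fS_{\sm(\nu)}$ by Lemma~\ref{lem:Galois-covering}, and hence $\End(\cI_\nu)\cong \bk[W_\nu]$. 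Then $\psi_\nu$ is defined by sending $E\in\Irr(\bk[W_\nu])$ to the simple top quotient of $\cI_\nu$ corresponding to the projective cover of $E$ under this isomorphism; equivalently, the Fourier transform of $\psi_\nu(E)$ is $\IC(Y_{(L,\cO)},\cL_E\otimes\widebar{\cE})$.

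For injectivity and disjointness of the series — that each $\IC_\lambda$ lies in the head of $\cI_\nu$ for a \emph{unique} $\nu$ — I would argue via supports of Fourier transforms. The simple quotients of $\cI_\nu$ have Fourier transforms supported on $\overline{Y_{(L,\cO_{[\nu]})}}=X_{(L,\cO_{[\nu]})}$, with dense-stratum restriction an irreducible summand of $(\xymap)_*\widetilde{\cE}\otimes\widebar{\cE}$; since the strata $Y_{(L,\cO_{[\nu]})}$ for distinct $\nu\in\Part(n,\ell)$ are distinct locally closed subvarieties (they belong to Lusztig's stratification of $\fg$), a simple perverse sheaf $\cG$ can have $\bT_\fg((a_G)_!\cG)$ with dense support equal to at most one such $Y_{(L,\cO_{[\nu]})}$. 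To get surjectivity, i.e.\ that every $\IC_\lambda$ occurs, one combines Corollary~\ref{cor:series} (every simple $\cF$ is a quotient of some $\Ind^G_{L\subset P}(\cG)$ with $(L,\cG)$ cuspidal, hence $L\in\bL_\nu$ and $\cG=\IC_{[\nu]}$, so $\cF$ is a quotient of $\cI_\nu$) with a count: the total number of simple quotients, summed over $\nu$, is $\sum_\nu |\Irr(\bk[W_\nu])|=\sum_{\nu\in\Part(n,\ell)}|\Part_\ell(\sm(\nu))|$; I would check combinatorially (this is essentially the statement to be proved explicitly in Theorem~\ref{thm:explicit}, and can be isolated as a bijection $\bigsqcup_\nu \uPart_\ell(\sm(\nu))\leftrightarrow\Part(n)$) that this equals $|\Part(n)|=|\Irr(\Perv_G(\cN_G,\bk))|$, forcing $\Psi$ to be bijective.

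The main obstacle is establishing $\End(\cI_\nu)\cong\bk[W_\nu]$ in characteristic $\ell$ and, with it, that $\psi_\nu$ is a bijection onto the simple quotients. In characteristic zero this rests on the decomposition theorem and Clifford theory (Proposition~\ref{prop:lusztig-cuspidal}\eqref{it:endo-ring}), neither of which is available modularly; the fix is to transport the endomorphism-ring computation through the Galois covering $\xymap_{(L,\cO)}$ via the projection formula $(\xymap)_*\widetilde{\cE}\cong(\xymap)_*\ubk\otimes\widebar{\cE}$ — which requires knowing $(\xymap)^*\widebar{\cE}\cong\widetilde{\cE}$ and that $\widetilde{\cE}=\ubk$ in our case — so that $\End\bigl(\IC(Y_{(L,\cO)},(\xymap)_*\ubk)\bigr)\cong\bk[N_G(L)/L]$ by the Galois-covering description \eqref{eqn:Galois-covering}, and then deduce that the head of $\cI_\nu$ is in bijection with $\Irr(\bk[W_\nu])$ from the fact that $\cI_\nu$ is the projective cover of its top in an appropriate quotient category (using the recollement structure of Section~\ref{sec:recollement}, or directly that $\Hom(\cI_\nu,-)$ restricted to the subquotient of $\Perv_G(\cN_G,\bk)$ spanned by the $\nu$-series is represented by a projective generator with endomorphism ring $\bk[W_\nu]$). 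Getting this bookkeeping right — and ensuring compatibility with modular reduction so that $\psi_\nu$ really does refine the characteristic-zero bijection — is the technical heart of the argument.
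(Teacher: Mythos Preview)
Your proposal has the right ingredients --- Fourier--Sato transform, the Galois covering $\xymap_{[\nu]}$, the endomorphism-ring identification, disjointness of the strata $Y_{[\nu]}$, and a final cardinality count --- but it has a structural gap: you treat Theorem~\ref{thm:cuspidals-GL-new} as an established input, whereas in the paper it is proved \emph{simultaneously} with Theorem~\ref{thm:main} by induction on $n$. Proposition~\ref{prop:constant-cuspidal} only shows that $(\cO_{(n)},\ubk)$ \emph{is} cuspidal when $n$ is a power of $\ell$; it says nothing about uniqueness, nor about the nonexistence of cuspidals when $n$ is not a power of $\ell$. Both of those facts are \emph{outputs} of the counting argument, not inputs to it. Concretely, you need the classification of cuspidals on the proper Levis $L\in\bL_\nu$ just to know that $\IC_{[\nu]}$ is the unique cuspidal there (so that Corollary~\ref{cor:series} forces every simple into the head of some $\cI_\nu$), and you need uniqueness again to justify ``one checks $(\cO',\cE')=(\cO,\cE)$'' in Corollary~\ref{cor:Fourier-cuspidal} (cf.\ Remark~\ref{rmk:Fourier-cuspidal}). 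The paper handles both by the induction hypothesis applied to each factor $\GL(\nu_i)$ (Lemma~\ref{lem:Fourier-cuspidal-GL}), and then the counting via Lemma~\ref{lem:comb} closes the induction by showing there is at most one cuspidal left over for $G$ itself.

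Two smaller points. First, your worry about $\End(\cI_\nu)\cong\bk[W_\nu]$ is misplaced: since $\cE=\ubk$ here, $\widetilde{\cE}=\ubk$ and $(\xymap_{[\nu]})_*\ubk$ has endomorphism ring $\bk[W_\nu]$ directly from the Galois property (Lemma~\ref{lem:Galois-covering}); there is no need for $\widebar{\cE}$ or Clifford theory. The bijection $\psi_\nu$ then follows because $\IC(Y_{[\nu]},\cdot)$ is fully faithful and preserves heads, so the indecomposable summands of $\cI_\nu$ correspond to indecomposable projective $\bk[W_\nu]$-modules and have simple heads indexed by $\Irr(\bk[W_\nu])$ --- this is Lemma~\ref{lem:induction-cuspidal-new}. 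Second, invoking the recollement of Section~\ref{sec:recollement} is circular: that section uses Theorems~\ref{thm:cuspidals-GL-new} and~\ref{thm:main} (e.g.\ in Lemma~\ref{lem:recolle-serre} and Proposition~\ref{prop:projective}).
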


The map $\psi_{\nu}$ will be defined intrinsically using the Fourier--Sato transform $\bT_\fg$ (see Lemma~\ref{lem:induction-cuspidal-new} below). The following result determines $\psi_{\nu}$ explicitly in terms of the combinatorial parameters, and thus effectively computes $\bT_\fg((a_G)_!\IC_\mu)$ for every simple $\IC_\mu\in\Perv_G(\cN_G,\bk)$ (see~\eqref{eqn:transform-rule}).

\begin{thm}\label{thm:explicit}
In the notation of Theorem~{\rm \ref{thm:main}}, the map $\psi_{\nu}$ is given explicitly by
\begin{equation}\label{eqn:genspring}
\psi_\nu(D^\blambda) = \IC_{\sum_{i \ge 0} \ell^i(\lambda^{(\ell^i)})^\tr}
\end{equation}
for $\blambda = (\lambda^{(1)}, \varnothing, \ldots, \varnothing, \lambda^{(\ell)}, \varnothing, \ldots, \varnothing, \lambda^{(\ell^2)}, \varnothing, \ldots) \in \uPart_\ell(\sm(\nu))$.
\end{thm}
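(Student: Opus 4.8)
The plan is to combine the Fourier--Sato computation of Corollary~\ref{cor:induction-IC} with an inductive analysis of the perverse sheaf $\cI_\nu$, reducing the explicit formula to a statement about induced nilpotent orbits and Specht modules for symmetric groups. First I would pin down, for $\nu\in\Part(n,\ell)$ with chosen Levi $L\in\bL_\nu$ and parabolic $P$, the structure of $\bT_\fg((a_G)_!\cI_\nu)$. Since $(\cO_{[\nu]},\ubk)$ is the cuspidal pair for $L$ by Theorem~\ref{thm:cuspidals-GL-new}, Corollary~\ref{cor:induction-IC} identifies $\bT_\fg((a_G)_!\cI_\nu)$ with $\IC(Y_{(L,\cO')},(\xymap_{(L,\cO')})_*\widetilde{\cE'})$ for the Fourier-conjugate cuspidal pair $(\cO',\cE')$; because $\GL(\ell^k)$ has a \emph{unique} cuspidal pair (Theorem~\ref{thm:cuspidals-GL-new}, cf.~Remark~\ref{rmk:Fourier-cuspidal}), we must have $(\cO',\cE')=(\cO_{[\nu]},\ubk)$, so in fact $\bT_\fg((a_G)_!\cI_\nu)\cong\IC(Y_{(L,\cO_{[\nu]})},(\xymap_{(L,\cO_{[\nu]})})_*\ubk)$. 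Now $\xymap_{(L,\cO_{[\nu]})}$ is a Galois covering with group $W_\nu=\fS_{\sm(\nu)}$ (Lemma~\ref{lem:Galois-covering}), so the simple summands of $(\xymap)_*\ubk$ — equivalently, via $\bT_\fg$, the simple quotients of $\cI_\nu$ (use that $\bT_\fg$ is an equivalence, combined with Proposition~\ref{prop:induction-IC} and the recollement/head analysis used to define $\psi_\nu$) — are indexed by $\Irr(\bk[W_\nu])$. This both re-proves the bijection $\psi_\nu$ of Theorem~\ref{thm:main} in a form that tracks the combinatorics and reduces Theorem~\ref{thm:explicit} to computing which orbit $\cO_\mu\subset\cN_G$ carries the support of $\IC_\mu=\psi_\nu(D^\blambda)$.

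The second main step is the combinatorial identification of the stratum and local system. The point is that the restriction of $\IC(Y_{(L,\cO_{[\nu]})},\cL_{D^\blambda}\otimes\widebar{\ubk})$ to the open stratum $Y_{(L,\cO_{[\nu]})}$ is the shifted local system $\cL_{D^\blambda}$, and applying $\bT_\fg$ back turns densest-support information on the $\fg$-side into the orbit $\cO_\mu$ on the $\cN_G$-side via the standard Springer-theoretic dictionary (as recalled in \S\ref{sec:intro}: $\bT_\fg((a_G)_!\IC_\mu)$ is an IC sheaf on a Lusztig stratum, and the stratum together with the local system determines $\mu$). Concretely, I would argue by a two-step induction: first handle the case where $\nu=(\ell^i,\ell^i,\ldots,\ell^i)$ has all parts equal — here $L\cong\GL(\ell^i)^{m}$, $W_\nu\cong\fS_m$, and the claim becomes $\psi_\nu(D^\lambda)=\IC_{\ell^i\lambda^\tr}$; this is exactly the ordinary modular Springer correspondence for $\GL(m)$ (the local system $\cL_{D^\lambda}$ on the regular semisimple set of $\fgl(m)$ corresponds to $D^\lambda$, whose IC extension Fourier-transforms to $\IC_{\lambda^\tr}$ on $\cN_{\GL(m)}$, see~\cite{juteau,mautner}) combined with the observation that inducing the cuspidal pair from $\GL(\ell^i)$ and taking the $\GL(\ell^i)^m$-block "multiplies the partition by $\ell^i$". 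The general case then follows by transitivity of induction (Lemma~\ref{lem:transitivity} / Corollary~\ref{cor:series}): an arbitrary $\nu\in\Part(n,\ell)$ has $L\cong\prod_i\GL(\ell^i)^{\sm_{\ell^i}(\nu)}$, and $\cI_\nu$ is obtained by first inducing to the intermediate Levi $M\cong\prod_i\GL(\ell^i\,\sm_{\ell^i}(\nu))$ the cuspidal sheaf, then inducing from $M$ to $G$; the first step contributes the factor $\ell^i(\lambda^{(\ell^i)})^\tr$ in each block by the equal-parts case, and the second step is a Springer induction that simply juxtaposes (adds) the resulting partitions, because inducing $\bigboxtimes_i\IC_{\ell^i(\lambda^{(\ell^i)})^\tr}$ from $M$ to $\GL(n)$ along a parabolic lands (by Corollary~\ref{cor:induced-orbit} and the dimension count of~\cite[Theorem~7.2.3]{cm} for induced orbits in type $A$) in the orbit labelled by the concatenation-and-reorder, i.e.\ the sum $\sum_i\ell^i(\lambda^{(\ell^i)})^\tr$, with the trivial local system surviving. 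This yields~\eqref{eqn:genspring}.

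The main obstacle, I expect, is the bookkeeping in the intermediate-Levi step: one must verify that inducing the product of modular Springer sheaves $\IC_{\ell^i(\lambda^{(\ell^i)})^\tr}$ from $M$ to $G$ has $\IC_{\sum_i\ell^i(\lambda^{(\ell^i)})^\tr}$ in its head with the correct multiplicity, and — crucially — that the $\psi_\nu$ of Theorem~\ref{thm:main} (defined via the head of $\cI_\nu$ and the Fourier transform) matches this combinatorially-indexed map on the nose, rather than up to some unknown permutation of $\Irr(\bk[W_\nu])$. The cleanest way to control this is to work entirely on the Fourier-transformed side, where $\cI_\nu$ becomes an honest IC sheaf on $Y_{(L,\cO_{[\nu]})}\subset\fg$ and the head of $\cI_\nu$ corresponds to the summands of $(\xymap_{(L,\cO_{[\nu]})})_*\ubk$; there, transitivity of $\uInd$ and Lemma~\ref{lem:restriction-Fourier} reduce everything to the behaviour of the Galois covering $\xymap$ under the tower $L\subset M\subset G$, and the identification of summands with Specht/irreducible modules is forced by the ordinary modular Springer correspondence for the relative Weyl groups $\fS_m$. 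A secondary, more routine, obstacle is checking the transpose and the exact placement of $\ell^i$: this is the difference between the "orbit" and "local system" normalizations of the Springer correspondence, and amounts to recalling that for $\GL(m)$ the Fourier--Sato transform sends $\IC(\cO_\mu,\ubk)$ to the IC extension of the local system on the regular semisimple set attached to $S^{\mu^\tr}$ in characteristic $0$, hence (by modular reduction, \S\ref{ss:modular-reduction}) to $D^{\mu^\tr}$-isotypic data in characteristic $\ell$ when $\mu^\tr$ is $\ell$-regular — which is exactly the condition built into $\blambda\in\uPart_\ell(\sm(\nu))$ in the statement.
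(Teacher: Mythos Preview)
Your overall architecture --- split into the equal-parts case $\nu=(\ell^i,\ldots,\ell^i)$ and the general case handled via the intermediate Levi $M=\prod_i\GL(m_i\ell^i)$ with transitivity of induction --- matches the paper exactly. The general-case step is essentially right, and your last paragraph correctly identifies that the matching of labels is controlled by the $W_\nu$-equivariance of the isomorphism $\uInd_{M\subset Q}^G\bigl(\IC(Y^M_{[\nu]},\cL^M_{[\nu]})\bigr)\cong\IC(Y_{[\nu]},\cL_{[\nu]})$; the paper carries this out just as you suggest.

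The genuine gap is in your equal-parts case. You assert that when $\nu=(\ell^i)^m$, the claim $\psi_\nu(D^\lambda)=\IC_{\ell^i\lambda^\tr}$ ``is exactly the ordinary modular Springer correspondence for $\GL(m)$'' plus the observation that inducing the cuspidal ``multiplies the partition by $\ell^i$''. But this is not a reduction you have actually made precise: the stratum $Y_{[\nu]}\subset\fgl(m\ell^i)$ is not the regular semisimple locus of any $\fgl(m)$, the Galois cover $\xymap_{[\nu]}$ with group $\fS_m$ is not literally a Springer cover for $\GL(m)$, and there is no functor at hand that sends $\IC_\mu$ on $\cN_{\GL(m)}$ to $\IC_{\ell^i\mu}$ on $\cN_{\GL(m\ell^i)}$ while intertwining the two Fourier pictures. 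Since in this case the intermediate Levi $M$ equals $G$ itself, no tower $L\subset M\subset G$ is available either --- this case really needs its own idea. The paper's idea is \emph{modular reduction from the characteristic-zero generalized Springer correspondence for $\SL(n)$}: over $\K$ there is a genuine cuspidal pair $(\cO_{[\nu]},\cE_{[\nu]}^\zeta)$ on the same $L'=\mathrm{S}(\GL(\ell^i)^m)$, Lusztig--Spaltenstein compute the resulting correspondence explicitly as $S^\lambda\mapsto\IC(\cO_{\ell^i\lambda^\tr},\cE^\zeta)$, and reducing mod $\ell$ (where $\cE^\zeta$ becomes trivial) forces $\IC(\cO_{\psi_\nu(\lambda)},\underline{\F})$ to appear in the modular reduction of $\IC(\cO_{\ell^i\lambda^\tr},\cE^\zeta)$, giving the \emph{inequality} $\psi_\nu(\lambda)\le\ell^i\lambda^\tr$.

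This points to a second, structural omission: the paper never proves $\psi_\nu(\blambda)=\psico_\nu(\blambda)$ directly. It proves only $\psi_\nu(\blambda)\le\psico_\nu(\blambda)$ in the dominance order (via the support estimate from Corollary~\ref{cor:induced-orbit}(1) in the general case, and via modular reduction as above in the equal-parts case), and then invokes the purely combinatorial fact (Lemma~\ref{lem:comb}) that $\Psico$ is a bijection onto the same set as $\Psi$ to upgrade the inequalities to equalities. Your sketch repeatedly expects equalities (``lands in the orbit labelled by the sum'', ``with the trivial local system surviving''), but the induction $\Ind_{M\subset Q}^G$ of a simple sheaf need not be simple, and all you actually get from Corollary~\ref{cor:induced-orbit} is a support bound; you need the bijection-plus-inequality trick to close the argument.
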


See Figure~\ref{fig} for an example of the map $\Psi$.  (In that figure, elements of $\Irr(\bk[W_\nu])$ and $\Irr(\Perv_G(\cN_G,\bk))$ are represented by their combinatorial labels in diagrammatic form.)

\begin{figure}
\[
\begin{array}{|l|l|l|}
\hline
\begin{array}{@{}l@{}}
\nu \in \Part(6,2) \\ \bL_\nu \in \fL
\end{array}
 & \blambda \in \uPart_2(\sm(\nu)) & \psi_\nu(\blambda) \in \Part(6) \\
\hline
\hline
\begin{array}{@{}l@{}}
(1,1,1,1,1,1) \\ \GL(1)^6
\end{array}
 & \lambda^{(1)} \in \Part_2(6) & \\
 \cline{2-3}
 & \begin{tableau} :\c\c\c\c\c\c\\ \end{tableau}
 & \begin{tableau} :;\\:\c\\:\c\\:\c\\:\c\\:\c\\:\c\\:;\\ \end{tableau} \\
 \cline{2-3}
 & \begin{tableau} :\c\c\c\c\c\\:\c\\ \end{tableau}
 & \begin{tableau} :;\\:\c\c\\:\c\\:\c\\:\c\\:\c\\:;\\ \end{tableau} 
 \begin{tableau} :;\\:;\\:;\\:;\\:;\\:;\\:;\\:;\\ \end{tableau} \\
 \cline{2-3}
 & \begin{tableau} :\c\c\c\c\\:\c\c\\ \end{tableau}
 & \begin{tableau} :;\\:\c\c\\:\c\c\\:\c\\:\c\\:;\\ \end{tableau} 
 \begin{tableau} :;\\:;\\:;\\:;\\:;\\:;\\:;\\:;\\ \end{tableau} \\
 \cline{2-3}
 & \begin{tableau} :\c\c\c\\:\c\c\\:\c\\ \end{tableau}
 & \begin{tableau} :;\\:\c\c\c\\:\c\c\\:\c\\:;\\ \end{tableau}
 \begin{tableau} :;\\:;\\:;\\:;\\:;\\:;\\:;\\:;\\ \end{tableau} \\
\hline
\begin{array}{@{}l@{}}
(2,1,1,1,1) \\ \GL(2) \times \GL(1)^4
\end{array}
 & (\lambda^{(1)}, \lambda^{(2)}) \in \Part_2(4) \times \Part_2(1) & \\
 \cline{2-3}
 & \begin{tableau} :\c\c\c\c\\ \end{tableau} \ ,\ 
   \begin{tableau} :\c\\ \end{tableau}
 & \begin{tableau} :;\\:\c\c\c\\:\c\\:\c\\:\c\\:;\\ \end{tableau} 
 \begin{tableau} :;\\:;\\:;\\:;\\:;\\:;\\:;\\:;\\ \end{tableau} \\
 \cline{2-3}
 & \begin{tableau} :\c\c\c;\\:\c\\ \end{tableau} \ ,\ 
   \begin{tableau} :\c\\ \end{tableau}
 & \begin{tableau} :;\\:\c\c\c\c\\:\c\\:\c\\:;\\ \end{tableau} 
 \begin{tableau} :;\\:;\\:;\\:;\\:;\\:;\\:;\\:;\\ \end{tableau} \\
\hline
\begin{array}{@{}l@{}}
(2,2,1,1) \\ \GL(2)^2 \times \GL(1)^2
\end{array}
 & (\lambda^{(1)}, \lambda^{(2)}) \in \Part_2(2) \times \Part_2(2) & \\
 \cline{2-3}
 & \begin{tableau} :\c\c\\ \end{tableau} \ ,\ 
   \begin{tableau} :\c\c\\ \end{tableau}
 & \begin{tableau} :;\\:\c\c\c\\:\c\c\c\\:;\\ \end{tableau} 
 \begin{tableau} :;\\:;\\:;\\:;\\:;\\:;\\:;\\:;\\ \end{tableau} \\
\hline
\begin{array}{@{}l@{}}
(2,2,2) \\ \GL(2)^3
\end{array}
 & \lambda^{(2)} \in \Part_2(3) & \\
 \cline{2-3}
 & \begin{tableau} :\c\c\c\\ \end{tableau}
 & \begin{tableau} :;\\:\c\c\\:\c\c\\:\c\c\\:;\\ \end{tableau} 
 \begin{tableau} :;\\:;\\:;\\:;\\:;\\:;\\:;\\:;\\ \end{tableau} \\
 \cline{2-3}
 & \begin{tableau} :\c\c\\:\c\\ \end{tableau}
 & \begin{tableau} :;\\:\c\c\c\c\\:\c\c\\:;\\ \end{tableau}
 \begin{tableau} :;\\:;\\:;\\:;\\:;\\:;\\:;\\:;\\ \end{tableau} \\
\hline
\begin{array}{@{}l@{}}
(4,1,1) \\ \GL(4) \times \GL(1)^2
\end{array}
 & (\lambda^{(1)},\lambda^{(4)}) \in \Part_2(2) \times \Part_2(1) & \\
 \cline{2-3}
 & \begin{tableau} :\c\c\\ \end{tableau} \ ,\ 
   \begin{tableau} :\c\\ \end{tableau}
 & \begin{tableau} :;\\:\c\c\c\c\c\\:\c\\:;\\ \end{tableau}
 \begin{tableau} :;\\:;\\:;\\:;\\:;\\:;\\:;\\:;\\ \end{tableau} \\
\hline
\begin{array}{@{}l@{}}
(4,2) \\ \GL(4) \times \GL(2)
\end{array}
 & (\lambda^{(2)},\lambda^{(4)}) \in \Part_2(1) \times \Part_2(1) & \\
 \cline{2-3}
 & \begin{tableau} :\c\\ \end{tableau} \ ,\ 
   \begin{tableau} :\c\\ \end{tableau}
 & \begin{tableau} :;\\:\c\c\c\c\c\c\\:;\\ \end{tableau}
 \begin{tableau} :;\\:;\\:;\\:;\\:;\\:;\\:;\\:;\\ \end{tableau} \\
\hline
\end{array}
\]
\caption{The bijection $\Psi$ for $n = 6$ and $\ell = 2$} \label{fig}
\end{figure}

\subsection{Proofs of Theorems~\ref{thm:cuspidals-GL-new} and~\ref{thm:main}}
\label{subsect:induction}

These two theorems will be proved simultaneously by induction on $n$, the base case $n=1$ being trivial.

Given $\nu \in \Part(n)$, recall that we have chosen a representative $L \in \bL_\nu$, and a parabolic subgroup $P \subset G$ containing $L$.  
We will use the results of \S\ref{ss:induction-IC} for $L$, $P$, the $L$-orbit $\cO_{[\nu]}$, and the constant local system $\underline{\bk}$ on $\cO_{[\nu]}$. We simplify the notation by setting $Y_{[\nu]}:=Y_{(L,\cO_{[\nu]})}$, $X_{[\nu]}:=X_{(L,\cO_{[\nu]})}$, $\widetilde{Y}_{[\nu]}:=\widetilde{Y}_{(L,\cO_{[\nu]})}$, $\widetilde{X}_{[\nu]}:=\widetilde{X}_{(L,\cO_{[\nu]})}$, $\xymap_{[\nu]}:=\xymap_{(L,\cO_{[\nu]})}$, $\pi_{[\nu]}:=\pi_{(L,\cO_{[\nu]})}$. As a special case of Lemma~\ref{lem:Galois-covering} we have:

\begin{lem} \label{lem:galois}
The morphism $\xymap_{[\nu]}$ is a Galois covering with Galois group $W_\nu$.
\end{lem}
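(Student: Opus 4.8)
The plan is to deduce this directly from Lemma~\ref{lem:Galois-covering} by specializing to $\cO = \cO_{[\nu]}$, the principal nilpotent orbit in $\cN_L$. That lemma asserts that $\xymap_{(L,\cO)}$ is a Galois covering with Galois group $N_G(L,\cO)/L$; applied here it gives immediately that $\xymap_{[\nu]}$ is a Galois covering with Galois group $N_G(L,\cO_{[\nu]})/L$. So the only point that remains is to identify this group with $W_\nu = N_G(L)/L$, i.e.\ to check that $N_G(L,\cO_{[\nu]}) = N_G(L)$.

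For this I would argue as follows. Any $n \in N_G(L)$ normalizes $L$, so its adjoint action stabilizes $\fl$ and $\cN_L$ and permutes the finitely many $L$-orbits on $\cN_L$, preserving dimensions. The principal orbit $\cO_{[\nu]}$ is the unique $L$-orbit on $\cN_L$ of maximal dimension (equivalently, it is open dense in $\cN_L$, since $\cN_L = \cN_{\GL(\nu_1)} \times \cN_{\GL(\nu_2)} \times \cdots$ and $\cO_{[\nu]}$ is the product of the principal orbits of the factors). Being characterized intrinsically, it is therefore preserved by every $n \in N_G(L)$, so $N_G(L) \subseteq N_G(L,\cO_{[\nu]})$; the reverse inclusion is immediate from the definition. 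Alternatively, one can simply invoke the observation recorded in~\S\ref{ss:combinatorics} that $\cO_{[\nu]} \subset L$ is canonically defined, independent of the choice of isomorphism $L \cong \GL(\nu_1) \times \GL(\nu_2) \times \cdots$, which forces it to be stable under $N_G(L)$.

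I do not expect any genuine obstacle here: the statement is a routine specialization of Lemma~\ref{lem:Galois-covering}, the only substantive input being the (standard) fact that the defining condition $n \cdot \cO_{[\nu]} = \cO_{[\nu]}$ holds automatically for all $n \in N_G(L)$ because the principal orbit is intrinsically characterized among the nilpotent orbits of $\cN_L$.
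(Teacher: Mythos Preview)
Your proposal is correct and matches the paper's approach exactly: the paper simply states the lemma as ``a special case of Lemma~\ref{lem:Galois-covering}'' with no further proof, leaving implicit the identification $N_G(L,\cO_{[\nu]}) = N_G(L)$ that you have spelled out. Your justification via the intrinsic characterization of the principal orbit is the natural (and correct) way to fill in that detail.
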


\begin{rmk}
\label{rmk:GL}
The varieties $Y_{[\nu]}$, $X_{[\nu]}$, $\widetilde{Y}_{[\nu]}$, $\widetilde{X}_{[\nu]}$ can be described very concretely in our case where $\fg=\fgl(n)$. The subset $Y_{[\nu]}\subset\fg$ consists of matrices whose generalized eigenspaces have dimensions $\nu_1,\nu_2,\ldots$ and which have a single Jordan block on each generalized eigenspace. Its closure $X_{[\nu]}$ consists of matrices for which the multiset of dimensions of generalized eigenspaces can be obtained from the multiset $\nu_1,\nu_2,\ldots$ by adding together some subsets. The variety $\widetilde{Y}_{[\nu]}$ can be identified with the variety of pairs $((U_i),x)$, where $(U_i)$ is an ordered tuple of subspaces of $\C^n$ such that $\C^n=\bigoplus_i U_i$ and $\dim U_i=\nu_i$, and $x\in Y_{[\nu]}$ has the subspaces $U_i$ as its generalized eigenspaces, in such a way that $\xymap_{[\nu]}((U_i),x)=x$.
The parabolic subgroup $P$ is the stabilizer of a partial flag in $\C^n$ with subspaces of dimensions $\nu_{\sigma(1)},\nu_{\sigma(1)}+\nu_{\sigma(2)},\nu_{\sigma(1)}+\nu_{\sigma(2)}+\nu_{\sigma(3)},\ldots$ where $\sigma$ is some permutation of the indices. Then $\widetilde{X}_{[\nu]}$ can be identified with the variety of pairs $((V_i),x)$, where $(V_i)$ is a partial flag with subspaces of these dimensions and $x\in X_{[\nu]}$ stabilizes each $V_i$ and has a single eigenvalue on each $V_i/V_{i-1}$, in such a way that $\pi_{[\nu]}((V_i),x)=x$.
\end{rmk}

\begin{lem}\label{lem:Fourier-cuspidal-GL}
Let $\nu \in \Part(n,\ell)$, and assume that $\nu \ne (n)$.  Then $\IC_{[\nu]}$ is the unique cuspidal perverse sheaf in $\Perv_{L}(\cN_{L},\bk)$.  Moreover, we have
\[
\bT_{\fl}\bigl((a_L)_!\IC_{[\nu]}\bigr) \cong \IC(\cO_{[\nu]} + \fz_L, \ubk).
\]
\end{lem}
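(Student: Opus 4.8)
The plan is to reduce both assertions to the general linear factors of $L$ and then invoke the inductive hypothesis. Since $\nu \in \Part(n,\ell)$ and $\nu \neq (n)$, the chosen Levi subgroup $L$ is isomorphic to $\GL(\nu_1) \times \GL(\nu_2) \times \cdots$, where each part $\nu_i$ is a power of $\ell$ that is \emph{strictly} smaller than $n$. Fixing such an isomorphism, $\cN_L$ is identified with $\cN_{\GL(\nu_1)} \times \cN_{\GL(\nu_2)} \times \cdots$, and by~\eqref{eqn:bijection-orbits-N} (all equivariant local systems here being constant) every simple object of $\Perv_L(\cN_L,\bk)$ is of the form $\IC_\blambda \cong \IC_{\lambda^{(1)}} \boxtimes \IC_{\lambda^{(2)}} \boxtimes \cdots$ for $\blambda \in \uPart(\nu)$.

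First I would record that cuspidality is ``componentwise'' for this product decomposition. A parabolic subgroup of a direct product of reductive groups is the product of parabolic subgroups of the factors, and a direct inspection of the defining diagram~\eqref{eqn:diagram-restriction-N}, together with the standard compatibility of $*$-pushforward and $!$-pullback with external products, shows that the corresponding functor $\Res$ is then the external product of the $\Res$ functors of the factors. Hence $\IC_\blambda$ is cuspidal for $L$ if and only if each $\IC_{\lambda^{(i)}}$ is cuspidal for $\GL(\nu_i)$. Since $\nu_i < n$, the inductive hypothesis (Theorem~\ref{thm:cuspidals-GL-new} for $\GL(\nu_i)$) applies, and, $\nu_i$ being a power of $\ell$, it tells us that $\GL(\nu_i)$ has a unique cuspidal pair, namely $(\cO_{(\nu_i)},\ubk)$ (existence being already Proposition~\ref{prop:constant-cuspidal}; only uniqueness uses the induction). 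Combining these two facts, the unique cuspidal simple object of $\Perv_L(\cN_L,\bk)$ is $\IC_{(\nu_1)} \boxtimes \IC_{(\nu_2)} \boxtimes \cdots = \IC_{[\nu]}$, which is the first assertion.

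For the second assertion I would apply Corollary~\ref{cor:Fourier-cuspidal} with $G$ replaced by the reductive group $L$: for the cuspidal pair $(\cO_{[\nu]},\ubk)$ there is a unique cuspidal pair $(\cO',\cE')$ of $L$ with $\bT_\fl\bigl((a_L)_!\IC_{[\nu]}\bigr) \cong \IC(\cO' + \fz_L, \cE' \boxtimes \ubk_{\fz_L})$. But we have just shown that $L$ has only one cuspidal pair, so necessarily $(\cO',\cE') = (\cO_{[\nu]},\ubk)$, and since $\ubk_{\cO_{[\nu]}} \boxtimes \ubk_{\fz_L} = \ubk_{\cO_{[\nu]} + \fz_L}$ this is exactly the claimed isomorphism. (Alternatively, one could decompose $\bT_\fl$ as the external product of the transforms $\bT_{\fgl(\nu_i)}$ and compute each factor using the inductive hypothesis together with Remark~\ref{rmk:Fourier-cuspidal}, but invoking Corollary~\ref{cor:Fourier-cuspidal} directly for $L$ is more economical.) I expect the only point requiring genuine care to be the componentwise description of restriction, and hence of cuspidality, for the product group $L$; this is routine given the structure of parabolic subgroups of a direct product and the base-change isomorphisms used throughout Section~\ref{sec:generalities}, so it should be the main, and rather mild, obstacle.
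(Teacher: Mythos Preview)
Your proposal is correct and follows essentially the same route as the paper: invoke the induction hypothesis (Theorem~\ref{thm:cuspidals-GL-new} for each factor $\GL(\nu_i)$) to identify $\IC_{[\nu]}$ as the unique cuspidal simple object for $L$, and then apply Corollary~\ref{cor:Fourier-cuspidal} to $L$ to deduce the Fourier--Sato computation. The paper's proof is terser, leaving implicit the componentwise description of cuspidality that you spell out, but the structure is identical.
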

\begin{proof}
The fact that $\IC_{[\nu]}$ is the unique cuspidal perverse sheaf for $L$ follows from Theorem~\ref{thm:cuspidals-GL-new}, which is known for each $\GL(\nu_i)$ by the induction hypothesis.
The claim about Fourier transform then follows from Corollary~\ref{cor:Fourier-cuspidal}.
\end{proof}

The following result, which generalizes facts shown by Lusztig in the $\bk=\Qlb$ case, is our main technical step. 
The $\nu=(1,1,\ldots,1)$ case of this statement is a particular case of~\cite[Proposition 5.4]{ahjr}.

\begin{lem}\label{lem:induction-cuspidal-new}
Let $\nu \in \Part(n,\ell)$, and assume that $\nu \ne (n)$.  Then the induced perverse sheaf
$\cI_\nu = \Ind_{L \subset P}^{G}(\IC_{[\nu]})$
enjoys the following properties:
\begin{enumerate}
\item Up to isomorphism, $\cI_\nu$ is independent of the choice of $P$ or $L$.\label{it:induction-indep}
\item There is a canonical isomorphism $\End(\cI_\nu) \cong \bk[W_\nu]$.\label{it:induction-endo}
\item Each indecomposable summand of $\cI_\nu$ has a simple head.\label{it:induction-summ}
\item There is a bijection \label{it:induction-head}
\[
\psi_\nu: \Irr(\bk[W_\nu]) \simto \left\{
\begin{array}{c}
\text{isomorphism classes of} \\
\text{simple quotients of $\cI_\nu$}
\end{array}
\right\}
\]
defined uniquely by the property that, for any $E\in \Irr(\bk[W_\nu])$,
\[
\bT_\fg\bigl((a_G)_!\psi_\nu(E)\bigr)\ \cong\ \IC(Y_{[\nu]}, \cE)
\]
where $\cE$ is the irreducible $G$-equivariant local system on $Y_{[\nu]}$ corresponding to $E$ as in~\eqref{eqn:Galois-covering}. In particular, $\psi_\nu$ is independent of the choice of $L$.
\end{enumerate}
\end{lem}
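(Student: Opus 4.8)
The plan is to prove the four assertions together, leveraging the Fourier--Sato transform to convert statements about $\cI_\nu = \Ind_{L\subset P}^G(\IC_{[\nu]})$ into statements about the intersection cohomology complex on the Lusztig stratum $Y_{[\nu]}$, where the relevant structure is controlled by the Galois covering $\xymap_{[\nu]}$ of Lemma~\ref{lem:galois}. Concretely, by Lemma~\ref{lem:Fourier-cuspidal-GL} we have $\bT_{\fl}((a_L)_!\IC_{[\nu]}) \cong \IC(\cO_{[\nu]}+\fz_L,\ubk)$, and then by Corollary~\ref{cor:induction-IC} (using that $(\cO_{[\nu]},\ubk)$ is cuspidal for $L$ with $(\cO',\cE')=(\cO_{[\nu]},\ubk)$, as there is a unique cuspidal pair, cf.\ Remark~\ref{rmk:Fourier-cuspidal}), we obtain a canonical isomorphism
\[
\bT_{\fg}\bigl((a_G)_!\cI_\nu\bigr) \cong \IC\bigl(Y_{[\nu]}, (\xymap_{[\nu]})_*\ubk\bigr).
\]
Since $\bT_\fg$ is an autoequivalence of $\Perv_G^{\mathrm{con}}(\fg,\bk)$ and $(a_G)_!$ is fully faithful, this already gives $\End(\cI_\nu) \cong \End(\IC(Y_{[\nu]},(\xymap_{[\nu]})_*\ubk)) \cong \End((\xymap_{[\nu]})_*\ubk)$, the last step because $\IC$-extension is fully faithful on the category of local systems on the open stratum. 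For a Galois covering with group $W_\nu = N_G(L)/L$, standard covering-space theory identifies $\End((\xymap_{[\nu]})_*\ubk) \cong \bk[W_\nu]$ canonically; this proves~\eqref{it:induction-endo}. For~\eqref{it:induction-indep}, note the right-hand side of the displayed isomorphism manifestly depends only on $(L,\cO_{[\nu]})$ and not on $P$, and the varieties $Y_{[\nu]}$ depend only on $\bL_\nu$ (they can be described intrinsically as in Remark~\ref{rmk:GL}), so changing $L$ or $P$ yields a canonically isomorphic object.

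Next I would use the structure of $\bk[W_\nu] \cong \bk[\fS_{\sm(\nu)}]$ as a finite-dimensional algebra. The functor $\bT_\fg((a_G)_!(-))$ gives an equivalence from the category of perverse sheaves on $\cN_G$ supported appropriately onto a subcategory of perverse sheaves on $\fg$; under it, $\cI_\nu$ corresponds to $\IC(Y_{[\nu]},(\xymap_{[\nu]})_*\ubk)$, whose endomorphism ring is $\bk[W_\nu]$. Decomposing $(\xymap_{[\nu]})_*\ubk = \bigoplus_E (\cL_E)^{\oplus \dim E'}$-style into indecomposable (projective-cover) pieces corresponds, via the equivalence of module categories $\End(M)\text{-mod}$ for $M$ a "big" object, to decomposing $\cI_\nu$ into indecomposable summands; the indecomposable summands of $\cI_\nu$ are thus in bijection with the indecomposable projective $\bk[W_\nu]$-modules, hence each has simple head (being the $\IC$-extension of an indecomposable direct summand of the semisimple-at-generic-point local system $(\xymap_{[\nu]})_*\ubk$, whose endomorphism ring is a local algebra). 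Since an $\IC$-extension of a local system $\cE$ on a smooth locally closed stratum has a simple head precisely when $\cE$ is indecomposable, this gives~\eqref{it:induction-summ}. Then the simple quotients of $\cI_\nu$ are exactly the heads of these indecomposable summands, which biject with $\Irr(\bk[W_\nu])$ (the head of the projective cover $P_E$ being the simple $\bk[W_\nu]$-module $E$); concretely, $\psi_\nu(E)$ is the unique simple perverse sheaf $\cF$ on $\cN_G$ with $\bT_\fg((a_G)_!\cF) \cong \IC(Y_{[\nu]},\cL_E)$, which is well-defined by Proposition~\ref{prop:Fourier-cuspidal}-style support arguments (the Fourier transform of a simple perverse sheaf on $\cN_G$ is a simple perverse sheaf on $\fg$, and $\bT_\fg$ is a bijection on simples). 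This proves~\eqref{it:induction-head}, and the characterization via $\bT_\fg$ makes $\psi_\nu$ independent of $L$ since $(Y_{[\nu]},\cL_E)$ is.

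I expect the main obstacle to be the bookkeeping in~\eqref{it:induction-summ} and~\eqref{it:induction-head}: specifically, establishing cleanly that passing to $\IC$-extension sets up an equivalence between (a) direct summands / quotients of the local system $(\xymap_{[\nu]})_*\ubk$ on $Y_{[\nu]}$ and (b) direct summands / quotients of $\IC(Y_{[\nu]},(\xymap_{[\nu]})_*\ubk)$ as a perverse sheaf, compatibly with the $\End$-identification. In characteristic zero this is trivial by semisimplicity, but here one must argue via the general fact that $\IC$ from a stratum is a fully faithful functor on local systems (an equivalence onto its image inside perverse sheaves), together with the observation that heads and socles are detected on the open stratum because the lower-dimensional strata contribute in strictly higher perverse degree. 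One also must verify that the identification $\End((\xymap_{[\nu]})_*\ubk)\cong\bk[W_\nu]$ is canonical and compatible with the identification $W_\nu\cong\fS_{\sm(\nu)}$ up to inner automorphism, so that the resulting $\Irr(\bk[W_\nu])$-labelling is well-defined; this requires being careful that the Galois group acts on the fibre of $\xymap_{[\nu]}$ via the regular representation, which follows from Lemma~\ref{lem:galois}. The explicit computation of which simple $\IC_\mu$ arises as $\psi_\nu(D^\blambda)$ is deferred to the proof of Theorem~\ref{thm:explicit} and is not needed here.
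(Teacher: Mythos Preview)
Your proposal is correct and follows essentially the same approach as the paper: both reduce everything to the key isomorphism $\bT_\fg((a_G)_!\cI_\nu)\cong\IC(Y_{[\nu]},(\xymap_{[\nu]})_*\ubk)$ via Corollary~\ref{cor:induction-IC} and Lemma~\ref{lem:Fourier-cuspidal-GL}, then read off all four properties from the structure of the regular representation of $W_\nu$ through the fully faithful $\IC$ functor. The paper disposes of your anticipated obstacle in~\eqref{it:induction-summ} and~\eqref{it:induction-head} by a single citation (to the fact that $\IC(Y_{[\nu]},\cdot)$ preserves indecomposable summands and heads, \cite[Proposition~2.28]{juteau-aif}); note that your sentence ``an $\IC$-extension of a local system $\cE$ has a simple head precisely when $\cE$ is indecomposable'' is not quite the right general statement---what is used is that $\IC$ preserves heads, combined with the fact that the indecomposable summands of the regular representation are projective covers and hence already have simple heads.
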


\begin{proof}
To prove these assertions, we will translate them into equivalent statements about the Fourier--Sato transform $\bT_\fg((a_G)_!\cI_\nu)$.  By Corollary~\ref{cor:induction-IC} and Lemma~\ref{lem:Fourier-cuspidal-GL}, we have a canonical isomorphism
\begin{equation} \label{eqn:anonymous}
\bT_\fg\bigl((a_G)_!\cI_\nu\bigr) \cong \IC(Y_{[\nu]}, \cL_{[\nu]}),
\end{equation}
where $\cL_{[\nu]}$ is the local system $(\xymap_{[\nu]})_*\underline{\bk}$ on $Y_{[\nu]}$. 
Since $Y_{[\nu]}$ and $(\xymap_{[\nu]})_* \underline{\bk}$ are independent of $P$ and $L$ (up to isomorphism), part~\eqref{it:induction-indep} follows. By Lemma~\ref{lem:galois}, we have $\End(\cL_{[\nu]}) \cong \bk[W_\nu]$.  We deduce part~\eqref{it:induction-endo} from the fact that the functor $\IC(Y_{[\nu]},\cdot )$ is fully faithful.

Since $\cL_{[\nu]}$ is the local system on $Y_{[\nu]}$ corresponding to the regular representation of $W_\nu$ as in~\eqref{eqn:Galois-covering}, the indecomposable summands of $\cL_{[\nu]}$ correspond to the indecomposable projective $\bk[W_\nu]$-modules, and they have simple heads corresponding to the irreducible $\bk[W_\nu]$-modules. Parts~\eqref{it:induction-summ} and~\eqref{it:induction-head} then follow from the fact that the functor $\IC(Y_{[\nu]},\cdot )$ preserves indecomposable summands and heads; see e.g.~\cite[Proposition 2.28]{juteau-aif}.
\end{proof}

We now need a purely combinatorial statement concerning the map which, according to the as-yet-unproved Theorem~\ref{thm:explicit}, computes the bijection $\psi_{\nu}$ introduced in Lemma~\ref{lem:induction-cuspidal-new}\eqref{it:induction-head}. We temporarily introduce a new notation for this map:
\[
\psico_\nu: \uPart_\ell(\sm(\nu)) \to \Part(n):\blambda\mapsto \sum_{i \ge 0} \ell^i(\lambda^{(\ell^i)})^\tr.
\]

\begin{lem} \label{lem:comb}
The following map is a bijection:
\[
\Psico = \bigsqcup_{\nu \in \Part(n,\ell)} \psico_\nu: \bigsqcup_{\nu \in \Part(n,\ell)} \uPart_\ell(\sm(\nu)) \to \Part(n).
\]
\end{lem}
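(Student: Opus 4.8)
The plan is to exhibit an explicit inverse, or at least to show that $\Psico$ is a bijection by producing a bijection at the level of the underlying combinatorial data. I would begin by unwinding what the domain $\bigsqcup_{\nu \in \Part(n,\ell)} \uPart_\ell(\sm(\nu))$ really parametrizes. An element of this disjoint union is a choice of $\nu \in \Part(n,\ell)$ (so $\nu$ has all parts powers of $\ell$), together with, for each $i \geq 0$, an $\ell$-regular partition $\lambda^{(\ell^i)}$ of $\sm_{\ell^i}(\nu)$, the number of parts of $\nu$ equal to $\ell^i$. Equivalently, the datum is a sequence $(\mu^{(0)}, \mu^{(1)}, \mu^{(2)}, \ldots)$ of $\ell$-regular partitions (with finitely many nonzero) such that $\sum_{i \geq 0} \ell^i \|\mu^{(i)}\| = n$, where $\mu^{(i)} = \lambda^{(\ell^i)}$; the partition $\nu$ is recovered as $\nu = \sum_{i\geq 0} \ell^i (\|\mu^{(i)}\| \text{ copies of } \ell^i)$, i.e.\ $\sm_{\ell^i}(\nu) = \|\mu^{(i)}\|$. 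Under this reformulation, $\Psico$ sends $(\mu^{(i)})_{i\geq 0}$ to $\sum_{i\geq 0} \ell^i (\mu^{(i)})^\tr \in \Part(n)$.

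So the statement to prove becomes: every partition $\lambda$ of $n$ can be written \emph{uniquely} as $\lambda = \sum_{i \geq 0} \ell^i \kappa^{(i)}$ where each $\kappa^{(i)}$ is the transpose of an $\ell$-regular partition, equivalently (transposing) each $\kappa^{(i)}$ is a partition with all column-lengths less than $\ell$, i.e.\ a partition whose parts each have multiplicity... no: $\kappa^{(i)} = (\mu^{(i)})^\tr$ with $\mu^{(i)}$ $\ell$-regular means precisely that $\sm_j(\kappa^{(i)}) = \mu^{(i),\tr}_j - \mu^{(i),\tr}_{j+1}$ can be arbitrary, but the condition ``$\mu^{(i)}$ is $\ell$-regular'' translates via $\kappa^{(i)}_j - \kappa^{(i)}_{j+1} = \sm_j(\mu^{(i)}) < \ell$ to the condition that $\kappa^{(i)}$ has all consecutive part-differences (including $\kappa^{(i)}_{\text{last}} - 0$) strictly less than $\ell$; call such a partition \emph{$\ell$-restricted}. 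Thus the key claim is: \textbf{the map $(\kappa^{(i)})_{i \geq 0} \mapsto \sum_{i\geq 0}\ell^i\kappa^{(i)}$ is a bijection from sequences of $\ell$-restricted partitions with $\sum \ell^i\|\kappa^{(i)}\| = n$ to $\Part(n)$.} I would prove this part-by-part: a partition $\lambda$ is determined by its sequence of parts $\lambda_1 \geq \lambda_2 \geq \cdots$, and $\sum_i \ell^i \kappa^{(i)}$ has $j$-th part $\sum_i \ell^i \kappa^{(i)}_j$. Since each $\kappa^{(i)}_j \in \{0,1,\ldots\}$ with the $\ell$-restricted constraint controlling differences down the columns, the claim reduces to the elementary fact that writing each $\lambda_j$ in base $\ell$ as $\lambda_j = \sum_i \ell^i a_{i,j}$ with digits $0 \leq a_{i,j} < \ell$ gives, for each fixed $i$, a sequence $(a_{i,j})_j$ — and one checks this sequence $a_{i,\bullet}$ is automatically weakly decreasing in $j$ \emph{with steps $< \ell$} precisely because $\lambda$ is weakly decreasing; conversely any such decomposition arises this way by uniqueness of base-$\ell$ digits.

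The cleanest way to organize this is to note that for a weakly decreasing sequence $\lambda_1 \geq \lambda_2 \geq \cdots \geq 0$, the ``base-$\ell$ digit in position $i$'' sequence $\kappa^{(i)} := (a_{i,j})_j$ need not itself be weakly decreasing in $j$ in general — so I must be careful. The correct decomposition is not digit-wise. Instead I would argue directly: define $\kappa^{(i)}_j$ recursively so that $\kappa^{(i)}$ is $\ell$-restricted and $\sum_i \ell^i \kappa^{(i)}_j = \lambda_j$; existence and uniqueness of such a decomposition of the \emph{monotone sequence} $(\lambda_j)_j$ into $\ell^i$-scaled monotone-with-small-steps pieces is exactly the statement that the addition map on $\ell$-restricted partitions (graded by the scaling $\ell^i$) is bijective onto all partitions — which is a known combinatorial identity, essentially the generating-function identity $\prod_{j\geq 1}\frac{1}{1-q^j} = \prod_{i\geq 0}\prod_{k=1}^{\ell-1}\frac{1}{1-q^{k\ell^i}}$ underlying Glaisher's bijection (the $\ell$-regular $\leftrightarrow$ $\ell$-restricted correspondence iterated). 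I expect \textbf{the main obstacle} to be setting up this bijection cleanly enough to see both injectivity and surjectivity transparently rather than just matching cardinalities via generating functions; the safest route is probably to give the explicit algorithm (strip off the ``$\ell$-restricted part'' $\kappa^{(0)}$ of $\lambda$ by taking $\kappa^{(0)}_j = \lambda_j - \ell\lfloor \text{something}\rfloor$ chosen to keep column-differences $<\ell$, then divide the remainder by $\ell$ and recurse) and verify it terminates and inverts $\sum_i\ell^i(-)$. Once the core bijection on $\ell$-restricted sequences is established, translating back through $\kappa^{(i)} = (\mu^{(i)})^\tr$, $\mu^{(i)} \in \Part_\ell(\sm_{\ell^i}(\nu))$, and repackaging as $\blambda \in \uPart_\ell(\sm(\nu))$ with $\nu$ recovered from the $\|\mu^{(i)}\|$, gives exactly that $\Psico$ is a bijection, completing the proof.
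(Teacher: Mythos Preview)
Your reformulation is correct and useful: after setting $\kappa^{(i)} = (\lambda^{(\ell^i)})^\tr$, the claim becomes that every $\mu \in \Part(n)$ decomposes uniquely as $\mu = \sum_{i \ge 0} \ell^i \kappa^{(i)}$ with each $\kappa^{(i)}$ an $\ell$-restricted partition. You also correctly diagnose that taking base-$\ell$ digits of the \emph{parts} $\mu_j$ fails, since the digit sequences need not be weakly decreasing. But at that point you retreat to generating functions or an unspecified recursive algorithm, and the proof is left incomplete.

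The missing observation --- which is exactly what the paper does --- is to take base-$\ell$ digits of the \emph{consecutive differences} $\mu_j - \mu_{j+1}$ rather than of the parts. In your language: the $\ell$-restricted condition on $\kappa^{(i)}$ says precisely that $\kappa^{(i)}_j - \kappa^{(i)}_{j+1} \in \{0,1,\ldots,\ell-1\}$ for all $j$. Hence
\[
\mu_j - \mu_{j+1} \;=\; \sum_{i \ge 0} \ell^i\,(\kappa^{(i)}_j - \kappa^{(i)}_{j+1})
\]
is forced to be the base-$\ell$ expansion of the nonnegative integer $\mu_j - \mu_{j+1}$, determining each $\kappa^{(i)}_j - \kappa^{(i)}_{j+1}$ uniquely; and since $\kappa^{(i)}_j = 0$ for $j$ large, this determines each $\kappa^{(i)}$ uniquely. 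Conversely, given $\mu$, defining the $\kappa^{(i)}$ by these base-$\ell$ digits of the differences produces weakly decreasing sequences with the required sum. In the paper's notation (without transposing), this is the statement that $\Psico(\blambda) = \mu$ if and only if $\sm_j(\lambda^{(\ell^i)})$ equals the $i$-th base-$\ell$ digit of $\mu_j - \mu_{j+1}$, which visibly specifies $\blambda$ (and hence $\nu$) uniquely. So the entire lemma is a one-line consequence of uniqueness of base-$\ell$ expansions, once you look at the right quantities.
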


\begin{proof}
For $\mu\in\Part(n)$, we can define integers $b_{i,j}(\mu)$ uniquely by
\[
\mu_j-\mu_{j+1}=\sum_{i\ge 0} b_{i,j}(\mu)\ell^i,\quad 0\leq b_{i,j}(\mu)<\ell.
\]
Then $\Psico(\blambda)=\mu$ if and only if $b_{i,j}(\mu)=\sm_j(\lambda^{(\ell^i)})$ for all $i,j$, which clearly holds for a unique $\blambda\in\uPart_\ell(\sm(\nu))$ for a unique $\nu\in\Part(n,\ell)$. 
\end{proof}

Recall that an example of $\Psico$ was given in Figure~\ref{fig}.  

We can now complete the proof of Theorems~\ref{thm:cuspidals-GL-new} and~\ref{thm:main}.
Suppose that $n$ is not a power of $\ell$, so that $(n)\notin\Part(n,\ell)$. Assembling the bijections $\psi_{\nu}$ we obtain a map
\[
\Psi = \bigsqcup_{\bL_\nu \in \fL} \psi_\nu: \bigsqcup_{\bL_\nu \in \fL} \Irr(\bk[W_\nu]) \longrightarrow \Irr(\Perv_G(\cN_G,\bk)).
\]
Now the images of the various $\psi_{\nu}$ are disjoint by Lemma~\ref{lem:induction-cuspidal-new}\eqref{it:induction-head}, because the various subsets $Y_{[\nu]}$ are disjoint. (This is clear from Remark~\ref{rmk:GL}; alternatively, it follows from the general result~\cite[\S6.1]{lusztig-cusp2}.)  
Hence $\Psi$ is injective. From Lemma~\ref{lem:comb} we know that the domain and codomain of $\Psi$ have the same cardinality, so $\Psi$ is a bijection. Thus every simple perverse sheaf in $\Perv_G(\cN_G,\bk)$ arises as a quotient of a sheaf induced from a proper Levi subgroup, meaning that there are no cuspidal perverse sheaves for $G$. 

Suppose that $n$ is a power of $\ell$. Assembling the bijections $\psi_{\nu}$ for $\nu\neq(n)$ we obtain a map
\[
\Psi' = \bigsqcup_{\substack{\bL_\nu \in \fL\\\bL_\nu\text{ proper}}} \psi_\nu: \bigsqcup_{\substack{\bL_\nu \in \fL\\\bL_\nu\text{ proper}}} \Irr(\bk[W_\nu]) \longrightarrow \Irr(\Perv_G(\cN_G,\bk))
\]
which is injective as before. Since $W_{(n)}$ is the trivial group, the domain here has cardinality one less than the codomain. On the other hand, we already know from Proposition~\ref{prop:constant-cuspidal} that $\IC_{(n)}$ is cuspidal, so it must be the unique element of the codomain not appearing in the image of $\Psi'$. Hence it is the unique cuspidal perverse sheaf for $G$, and we can complete $\Psi'$ to the desired bijection $\Psi$ by defining $\psi_{(n)}(\bk)=\IC_{(n)}$, where $\bk$ denotes the unique element of $\Irr(\bk[W_{(n)}])$. This completes the proof of Theorems~\ref{thm:cuspidals-GL-new} and~\ref{thm:main}. 
\qed

\bigskip

Note that the assumption $\nu \ne (n)$ in Lemma~\ref{lem:Fourier-cuspidal-GL} was needed solely in order to apply the induction hypothesis to each factor of $L$. So the same proof now applies to the $\nu=(n)$ case also (when $n$ is a power of $\ell$), showing that
\begin{equation}
\bT_\fg\bigl((a_G)_!\IC_{(n)}\bigr)\cong\IC(\cO_{(n)} + \fz_G,\underline{\bk})=\IC(Y_{(n)},\underline{\bk}).
\end{equation}
Thus the intrinsic definition of $\psi_{\nu}$ given in Lemma~\ref{lem:induction-cuspidal-new}\eqref{it:induction-head} is valid for $\nu=(n)$ also.

\subsection{Proof of Theorem~\ref{thm:explicit}}
\label{subsect:proofs}

Identifying the domain and codomain of $\Psi$ with their respective sets of combinatorial parameters, we can interpret $\Psi$ as a bijection
\[
\Psi = \bigsqcup_{\nu \in \Part(n,\ell)} \psi_\nu: \bigsqcup_{\nu \in \Part(n,\ell)} \uPart_\ell(\sm(\nu)) \to \Part(n).
\]
Theorem~\ref{thm:explicit} asserts that this bijection equals the combinatorially-defined bijection $\Psico$ of Lemma~\ref{lem:comb}. Equivalently, it says that for any $\nu\in\Part(n,\ell)$ and $\blambda\in\uPart_\ell(\sm(\nu))$, we have
\begin{equation} \label{eqn:transform-rule}
\bT_\fg\bigl((a_G)_!\IC_{\psico_\nu(\blambda)}\bigr)\cong \IC(Y_{[\nu]},\cD^{\blambda}),
\end{equation}
where $\cD^{\blambda}$ is the irreducible $G$-equivariant local system on $Y_{[\nu]}$ associated to $D^{\blambda}\in\Irr(\bk[W_\nu])$ as in~\eqref{eqn:Galois-covering}. Note that the $\nu=(1,1,\ldots,1)$ case of this statement is exactly the determination of the modular Springer correspondence given in~\cite[Theorem 6.4.1]{juteau}; in fact the general proof will use a similar idea. (In view of~\eqref{eqn:transform-rule}, Figure~\ref{fig} can be interpreted as recording the Fourier--Sato transforms of all simple perverse sheaves in $\Perv_{\GL(6)}(\cN_{\GL(6)}, \bk)$ where $\bk$ has characteristic~$2$.)

We will prove Theorem~\ref{thm:explicit} by induction on $n$, the base case $n=1$ being trivial. Since we know that $\Psi$ (interpreted as above) and $\Psico$ are bijections with the same domain and codomain, it suffices to prove that for each $\nu\in\Part(n,\ell)$ we have
\begin{equation}\label{eqn:main-ineq-new}
\psi_\nu(\blambda) \le \psico_\nu(\blambda) \qquad\text{for all $\blambda \in \uPart_\ell(\sm(\nu))$,}
\end{equation}
where $\le$ denotes the usual dominance partial order on partitions, corresponding to the closure order on nilpotent orbits. (However, our induction hypothesis will still have the equality $\psi_\nu=\psico_\nu$ rather than this inequality.) 

\begin{lem}\label{lem:induction-mli}
Assume that $n = m\ell^i$ for some $m>1$ and $i\geq 0$. Then~\eqref{eqn:main-ineq-new} holds for the partition $\nu = (\ell^i, \ell^i, \ldots, \ell^i)$.
\end{lem}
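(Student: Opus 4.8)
The plan is to reduce the claim for $\nu=(\ell^i,\ldots,\ell^i)$ ($m$ copies) to the already-available case $n=\ell^i$ combined with the Springer correspondence for $\GL(m)$. Write $L\in\bL_\nu$, so $L\cong\GL(\ell^i)^m$, and recall from the discussion just before this lemma that $\bT_\fg$ carries the cuspidal $\IC_{[\nu]}$ on $\cN_L$ to $\IC(\cO_{[\nu]}+\fz_L,\ubk)$. By Lemma~\ref{lem:induction-cuspidal-new}\eqref{it:induction-head}, for $D^\blambda\in\Irr(\bk[W_\nu])$ with $W_\nu\cong\fS_m$ (since $\sm(\nu)=(0,\ldots,0,m,0,\ldots)$ with the $m$ in position $\ell^i$), we have $\bT_\fg((a_G)_!\psi_\nu(D^\blambda))\cong\IC(Y_{[\nu]},\cD^\blambda)$, where $\cD^\blambda$ is the local system on $Y_{[\nu]}$ attached to the Specht-quotient module $D^{\lambda^{(\ell^i)}}$ via the Galois covering $\xymap_{[\nu]}$ with group $\fS_m$. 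Here $\blambda$ has a single nonzero component $\lambda^{(\ell^i)}\in\Part_\ell(m)$, and $\psico_\nu(\blambda)=\ell^i\cdot(\lambda^{(\ell^i)})^\tr$. So~\eqref{eqn:main-ineq-new} for this $\nu$ reads: the support-closure of $\bT_\fg((a_G)_!\IC_\mu)$ meets $\cO_{\ell^i\cdot(\lambda^{(\ell^i)})^\tr}+\fz_G$ only if $\mu\le\ell^i\cdot(\lambda^{(\ell^i)})^\tr$, i.e. $\psi_\nu(D^\blambda)\le\psico_\nu(\blambda)$.

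The key step is a "block-scaling" comparison with the $\GL(m)$ Springer correspondence. First I would use the concrete description of $Y_{[\nu]}$ in Remark~\ref{rmk:GL}: a matrix in $Y_{[\nu]}$ has $m$ generalized eigenspaces each of dimension $\ell^i$, each carrying a single Jordan block. Projecting off the nilpotent parts, $Y_{[\nu]}$ maps to the regular semisimple locus $(\fgl(m))^{\mathrm{rs}}\times(\text{fixed data})$, and the covering $\xymap_{[\nu]}$ is, up to the contractible nilpotent directions, pulled back from the Springer covering for $\GL(m)$ over its regular semisimple set. Under this identification $\cD^\blambda$ corresponds to the Springer local system $\cL_{D^{\lambda^{(\ell^i)}}}$ on $(\fgl(m))^{\mathrm{rs}}$. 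By~\cite[Theorem 6.4.1]{juteau}, the modular Springer correspondence for $\GL(m)$ sends $D^{\lambda^{(\ell^i)}}$ to $\IC_{(\lambda^{(\ell^i)})^\tr}$; equivalently, the $\IC$-extension of $\cL_{D^{\lambda^{(\ell^i)}}}$ to $\fgl(m)$ has, after Fourier--Sato transform back, support $\overline{\cO_{(\lambda^{(\ell^i)})^\tr}}$. Re-inflating by the $\ell^i$-dimensional Jordan blocks multiplies the partition by $\ell^i$: the relevant nilpotent orbit in $\cN_G$ attached to $\IC(Y_{[\nu]},\cD^\blambda)$, when one passes through $\bT_\fg$, is dominated by $\ell^i\cdot(\lambda^{(\ell^i)})^\tr$. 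The dominance (rather than equality) comes precisely from the fact that the $\IC$-extension may have stalks on smaller strata, exactly as in the proof of~\cite[Theorem 6.4.1]{juteau}.

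The main obstacle is making the "block-scaling" geometry rigorous — i.e., relating $\IC(Y_{[\nu]},\cD^\blambda)$ on $\fgl(n)$ to the $\GL(m)$ Springer sheaf inflated along the regular-eigenvalue-plus-fixed-Jordan-block decomposition, and checking that this inflation interacts with the Fourier--Sato transform on $\fgl(n)$ so as to produce an orbit closure of the scaled shape. I expect this to be handled by combining Lemma~\ref{lem:restriction-Fourier} and transitivity (Lemma~\ref{lem:transitivity}) with the explicit stalk estimates for $\pi_{[\nu]}$ recalled in the proof of Proposition~\ref{prop:induction-IC}; once the stalk of $\bT_\fg((a_G)_!\IC_\mu)$ on the open stratum of $Y_{[\nu]}$ is identified with a Springer-type local system for $\GL(m)$, the inequality $\mu\le\ell^i\cdot(\lambda^{(\ell^i)})^\tr$ follows from the known $\GL(m)$ result together with the semicontinuity of support. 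The remaining bookkeeping — that the dominance order on $\Part(m)$ pushes forward to dominance on $\ell^i\cdot\Part(m)\subseteq\Part(n)$ — is the elementary fact that $\mu\le\mu'$ implies $\ell^i\mu\le\ell^i\mu'$, and that every $\mu$ dominated by $\ell^i\cdot(\lambda^{(\ell^i)})^\tr$ arising here is itself in $\ell^i\cdot\Part(m)$ because the eigenvalue data forces the generalized eigenspace dimensions to stay divisible by $\ell^i$ on the relevant stratum.
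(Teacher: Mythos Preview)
Your proposal takes a genuinely different route from the paper's proof, and the route has a real gap at exactly the point you flag as ``the main obstacle.''

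The paper does \emph{not} attempt any direct geometric comparison between $\fgl(n)$ and $\fgl(m)$. Instead it argues by modular reduction from the characteristic-zero generalized Springer correspondence for $G'=\SL(n)$. Over a sufficiently large $\K$ of characteristic $0$, the Levi $L'=G'\cap L$ admits a cuspidal pair $(\cO_{[\nu]},\cE_{[\nu]}^\zeta)$ where $\cE_{[\nu]}^\zeta$ is a rank-one local system on which the centre acts through a primitive $\ell^i$-th root of unity $\zeta$. Lusztig--Spaltenstein's explicit determination of the generalized Springer correspondence for $\SL(n)$, combined with Evens--Mirkovi\'c, gives
\[
\bT_{\fg'}\bigl((a_{G'})_!\IC(\cO_{\ell^i\lambda^\tr},\cE_{\ell^i\lambda^\tr}^\zeta)\bigr)\cong\IC(Y'_{[\nu]},\cS^\lambda_\K\otimes\widebar{\cE}_{[\nu]}^\zeta).
\]
The decisive point is that $\zeta$ has $\ell$-power order, so reducing modulo $\ell$ kills the twist: $\F\otimes_\O\widebar{\cE}_{[\nu],\O}^\zeta\cong\ubk$. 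Hence $(\cD^\lambda_\F)'$ occurs in the modular reduction of $\cS^\lambda_\K\otimes\widebar{\cE}_{[\nu]}^\zeta$, and since Fourier--Sato commutes with change of scalars, $\IC(\cO_{\psi_\nu(\lambda)},\underline{\F})$ occurs in the modular reduction of $\IC(\cO_{\ell^i\lambda^\tr},\cE_{\ell^i\lambda^\tr}^\zeta)$, forcing $\psi_\nu(\lambda)\le\ell^i\lambda^\tr=\psico_\nu(\lambda)$.

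Your approach, by contrast, tries to transport the known $\GL(m)$ Springer correspondence to $\GL(n)$ via a ``block-scaling'' operation. The difficulty is that this operation has no sheaf-theoretic meaning that interacts with Fourier--Sato transform. You correctly identify the Galois covering $\xymap_{[\nu]}$ as having the same group $\fS_m$ as the Springer covering for $\GL(m)$, so the local systems $\cD^\blambda$ and $\cL_{D^\lambda}$ correspond to the same $\fS_m$-module; but they live on entirely different varieties inside different Lie algebras, and there is no map between $\fgl(n)$ and $\fgl(m)$ that carries one $\IC$-extension to the other in a way compatible with $\bT_{\fgl(n)}$ and $\bT_{\fgl(m)}$. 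The tools you propose to close this gap---Lemma~\ref{lem:restriction-Fourier}, Lemma~\ref{lem:transitivity}, and the fibre-dimension estimates in Proposition~\ref{prop:induction-IC}---all operate within a single ambient group and its Levi subgroups; none of them furnishes a bridge to an external $\GL(m)$. Your final sentence about ``eigenvalue data forcing divisibility by $\ell^i$'' is likewise heuristic: nothing in the setup guarantees that the support of $\bT_\fg^{-1}\IC(Y_{[\nu]},\cD^\blambda)$ lies in $\ell^i\cdot\Part(m)$ a priori.

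In short, the missing idea is to stay on $\fg$ throughout and exploit the existence of genuine characteristic-zero cuspidals on $\SL(n)$'s Levi subgroups whose modular reduction becomes trivial---this is what lets the paper import Lusztig--Spaltenstein's precise char-$0$ answer rather than having to rebuild a geometric comparison from scratch.
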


\begin{proof}
Since the bijection $\Psi$ does not depend on which field of characteristic $\ell$ we use, we can assume for this proof that $\bk=\F$ where $(\K,\O,\F)$ is an $\ell$-modular system as in the proof of Proposition~\ref{prop:constant-cuspidal}. We will deduce the result from the known description of the generalized Springer correspondence for the group $G' = \mathrm{SL}(n)$ over the field $\K$ of characteristic zero~\cite{lus-spalt}. To be precise,~\cite{lus-spalt} treats the unipotent variety in $\mathrm{SL}(n)$, for $\Qlb$-sheaves in the {\'e}tale setting; but the same arguments can be carried out in our setting.

Let $L'=G' \cap L \cong \mathrm{S}(\GL(\ell^i) \times \cdots \times \GL(\ell^i))$ be the Levi subgroup of $G'$ corresponding to $L$, and let $\fl'$ denote its Lie algebra.
Let $A_{G'}$ (resp.~$A_{L'}$) denote the component group of the centre of $G'$ (resp.~$L'$).  Then $A_{G'}$ (resp.~$A_{L'}$) is cyclic of order $n$ (resp.~$\ell^i$).  Fix a generator $z \in A_{G'}$, and let $\bar z$ denote its image in $A_{L'}$.
Define $Y'_{[\nu]}=Y_{(L',\cO_{[\nu]})}=Y_{[\nu]}\cap\fg'$ and $\widetilde{Y}'_{[\nu]}=\widetilde{Y}_{(L',\cO_{[\nu]})}$, which we can obviously identify with $\widetilde{Y}_{[\nu]}\cap(G\times^L\fl')$. Then $\xymap_{[\nu]}$ restricts to the Galois covering $\xymap_{[\nu]}'=\xymap_{(L',\cO_{[\nu]})}:\widetilde{Y}'_{[\nu]} \to Y'_{[\nu]}$ with Galois group $W_\nu \cong \fS_m$.

Fix a primitive $\ell^i$-th root of unity $\zeta$ in $\K$, and let $\cE_{[\nu]}^\zeta$ be the unique irreducible $L'$-equivariant $\K$-local system on $\cO_{[\nu]}$ on which $\bar z\in A_{L'}$ acts by $\zeta$. Then $(\cO_{[\nu]},\cE_{[\nu]}^\zeta)$ is a cuspidal pair for $L'$, by 
the classification of cuspidal pairs in characteristic zero (see~\cite[\S 10.3]{lusztig}).

We associate to $(\cO_{[\nu]},\cE_{[\nu]}^\zeta)$ a $G'$-equivariant local system $\widetilde{\cE}_{[\nu]}^\zeta$ on $\widetilde{Y}'_{[\nu]}$ as in \S\ref{ss:induction-IC}, and a $G'$-equivariant local system $\widebar{\cE}_{[\nu]}^\zeta$ on $Y'_{[\nu]}$ as in Proposition~\ref{prop:lusztig-cuspidal}\eqref{it:loc-descend}. For any $\lambda \in \Part(m)$, we have an irreducible $\K[\fS_m]$-module $S^\lambda_\K$ and a corresponding local system $\cS^\lambda_\K$ on $Y'_{[\nu]}$, defined as in~\eqref{eqn:Galois-covering}.  By Proposition~\ref{prop:lusztig-cuspidal}\eqref{it:endo-ring}, the irreducible summands of $\IC(Y'_{[\nu]}, (\xymap'_{[\nu]})_*\widetilde{\cE}_{[\nu]}^\zeta)$ are of the form $\IC(Y'_{[\nu]}, \cS^\lambda_\K \otimes \widebar{\cE}_{[\nu]}^\zeta)$.  For brevity, we set $\cS^{\lambda,\zeta}_\K = \cS^\lambda_\K \otimes \widebar{\cE}_{[\nu]}^\zeta$.

According to~\cite[Theorem 6.5(c)]{lusztig} and~\cite[Proposition~5.2]{lus-spalt}, we have
\begin{equation}
(a_{G'})^*\IC(Y'_{[\nu]}, \cS^{\lambda,\zeta}_\K) \cong \IC(\cO_{\ell^i\lambda},\cE_{\ell^i\lambda}^\zeta)[\dim(Y'_{[\nu]}) - \dim(\cO_{\ell^i\lambda})],
\end{equation}
where $\cE_{\ell^i\lambda}^\zeta$ is the unique irreducible $G'$-equivariant $\K$-local system on $\cO_{\ell^i\lambda}$ on which $z\in A_G$ acts by $\zeta$. On the other hand, by~\cite[\S3.7 and Theorem~3.8(c)]{evens-mirk}, we have
\begin{equation}
\IC(Y'_{[\nu]}, \cS^{\lambda,\zeta}_\K)\cong \bT_{\fg'}\bigl((a_{G'})_!(a_{G'})^*\IC(Y'_{[\nu]}, \cS^{\lambda^\tr,\zeta}_\K)[-\dim(Y'_{[\nu]})+\dim(\cO_{\ell^i\lambda^\tr})]\bigr).
\end{equation}
We conclude that
\begin{equation} \label{eqn:fundamental}
\IC(Y'_{[\nu]}, \cS^{\lambda,\zeta}_\K)\cong \bT_{\fg'}\bigl((a_{G'})_!\IC(\cO_{\ell^i\lambda^\tr},\cE_{\ell^i\lambda^\tr}^\zeta)\bigr).
\end{equation}

From now on we assume that $\lambda \in \Part_{\ell}(m)$.
We have already defined (after~\eqref{eqn:transform-rule}) the $\F$-local system $\cD^\lambda_\F$ on $Y_{[\nu]}$; let $(\cD^\lambda_\F)'$ denote its restriction to $Y'_{[\nu]}$, which is again the local system corresponding to the irreducible $\F[\fS_m]$-module $D^\lambda_\F$, but now for the restricted covering map $\xymap'_{[\nu]}$. By definition of $\psi_\nu$ we have
\begin{equation}
\IC(Y_{[\nu]},\cD^\lambda_\F)\cong\bT_\fg\bigl((a_{G})_!\IC(\cO_{\psi_\nu(\lambda)},\underline{\F})\bigr),
\end{equation}
which implies that
\begin{equation} \label{eqn:auxiliary}
\IC(Y'_{[\nu]},(\cD^\lambda_\F)')\cong\bT_{\fg'}\bigl((a_{G'})_!\IC(\cO_{\psi_\nu(\lambda)},\underline{\F})\bigr).
\end{equation}

Now we want to consider the `modular reduction' of~\eqref{eqn:fundamental}. We have not defined modular reduction for general perverse sheaves on $\fg'$, only for those in $\Perv_{G'}(\cN_{G'},\K)$. However, the left-hand side of~\eqref{eqn:fundamental} is of a form that we can treat merely by considering local systems with finite monodromy (effectively, representations of finite groups).  Choose torsion-free $\O$-forms $\cS^\lambda_\O$ and $\widebar{\cE}_{[\nu],\O}^\zeta$ of $\cS^\lambda_\K$ and $\widebar{\cE}_{[\nu]}^\zeta$, respectively, and let $\cS^{\lambda,\zeta}_\O = \cS^\lambda_\O \otimes \widebar{\cE}_{[\nu],\O}^\zeta$, a torsion-free $\O$-form of $\cS^{\lambda,\zeta}_\K$.  Since $\widebar{\cE}_{[\nu]}^\zeta$ has rank $1$ and order a power of $\ell$, we see that $\F \otimes_\O \widebar{\cE}_{[\nu],\O}^\zeta \cong \ubk$.  In particular,
\begin{equation}
\F \otimes_\O \cS^{\lambda,\zeta}_\O \cong \F \otimes_\O \cS^\lambda_\O.
\end{equation}
It follows that $(\cD^\lambda_\F)'$ is a constituent of $\F\otimes_{\O}\cS^{\lambda,\zeta}_\O$. Hence $\IC(Y'_{[\nu]},(\cD^\lambda_\F)')$ is a constituent of $\F\lotimes_{\O}\IC(Y'_{[\nu]},\cS^{\lambda,\zeta}_\O)$. Since Fourier--Sato transform is an equivalence and commutes with change of scalars by~\eqref{eqn:ks}, we conclude from~\eqref{eqn:fundamental} and~\eqref{eqn:auxiliary} that $\IC(\cO_{\psi_\nu(\lambda)},\underline{\F})$ occurs in the modular reduction of $\IC(\cO_{\ell^i\lambda^\tr},\cE_{\ell^i\lambda^\tr}^\zeta)$. In particular, $\cO_{\psi_\nu(\lambda)}$ is contained in the closure of $\cO_{\ell^i\lambda^\tr}=\cO_{\psico_\nu(\lambda)}$, which gives the desired inequality.
\end{proof}

The proof of Theorem~\ref{thm:explicit} is completed by:

\begin{lem} 
If $\nu\in\Part(n,\ell)$ is not of the form $(\ell^i,\ell^i,\ldots,\ell^i)$, then~\eqref{eqn:main-ineq-new} holds.
\end{lem}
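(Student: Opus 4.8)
The plan is to reduce to the factors of an intermediate Levi subgroup, using that $\nu$ being neither of the form $(\ell^i,\ldots,\ell^i)$ forces this Levi to be proper. Throughout we may use the induction hypothesis, i.e. Theorem~\ref{thm:explicit} for general linear groups of rank $<n$. Write $a_i:=\sm_{\ell^i}(\nu)$ for the multiplicity of $\ell^i$ among the parts of $\nu$, and choose a Levi subgroup $M$ of $G$, containing $L$ and isomorphic to $\prod_{i\ge0}\GL(a_i\ell^i)$, with $L\cap\GL(a_i\ell^i)\cong\GL(\ell^i)^{a_i}$; also choose a parabolic $Q\supseteq P$ with Levi factor $M$. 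Since $\nu$ is not of the form $(\ell^i,\ldots,\ell^i)$, at least two of the $a_i$ are nonzero, so $M\subsetneq G$ and each factor $\GL(a_i\ell^i)$ has $a_i\ell^i<n$. By Lemma~\ref{lem:transitivity} (and Lemma~\ref{lem:induction-cuspidal-new}\eqref{it:induction-indep}, which makes the choices immaterial) we have $\cI_\nu\cong\Ind^G_{M\subset Q}(\cI_\nu^M)$, where $\cI_\nu^M:=\Ind^M_{L\subset P\cap M}(\IC_{[\nu]})$ factors as the external product over $i$ of the sheaves $\Ind^{\GL(a_i\ell^i)}_{\GL(\ell^i)^{a_i}}(\IC_{[\nu^{(i)}]})$, where $\nu^{(i)}=(\ell^i,\ldots,\ell^i)$ has $a_i$ parts. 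Applying the induction hypothesis to each factor $\GL(a_i\ell^i)$, the simple quotients of $\cI_\nu^M$ are precisely the $\IC_{\blambda'}$ for $\blambda\in\uPart_\ell(\sm(\nu))$, where $\blambda'$ is the nilpotent datum on $\cN_M$ whose $\GL(a_i\ell^i)$-component has Jordan type $\ell^i(\lambda^{(\ell^i)})^\tr$; moreover the corresponding bijection $\psi_\nu^M$ sends $\blambda$ to $\IC_{\blambda'}$. Since induced orbits in $\GL$ are obtained by adding Jordan types (in the sense of \S\ref{ss:combinatorics}), the $G$-orbit induced from $\cO_{\blambda'}$ has Jordan type $\sum_{i\ge0}\ell^i(\lambda^{(\ell^i)})^\tr=\psico_\nu(\blambda)$.

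The crucial step is the \emph{transitivity of the parametrization}: for each $\blambda$, the simple perverse sheaf $\psi_\nu(\blambda)$ is the head of $\Ind^G_{M\subset Q}(\IC_{\blambda'})$. I would prove this on the Fourier side. By Corollary~\ref{cor:induction-IC} for $M$ (with Lemma~\ref{lem:Fourier-cuspidal-GL}, so that $(\cO_{[\nu]},\ubk)$ is $\bT_\fl$-stable) one has $\bT_{\mathfrak{m}}((a_M)_!\cI_\nu^M)\cong\IC(Y_{[\nu]}^M,\cL_{[\nu]}^M)$ with $\cL_{[\nu]}^M:=(\xymap_{[\nu]}^M)_*\ubk$, and then Lemma~\ref{lem:restriction-Fourier}, \eqref{eqn:restrictions}, Lemma~\ref{lem:transitivity} and \eqref{eqn:anonymous} yield
\[
\uInd^G_{M\subset Q}\bigl(\IC(Y_{[\nu]}^M,\cL_{[\nu]}^M)\bigr)\ \cong\ \IC(Y_{[\nu]}^G,\cL_{[\nu]}^G).
\]
Here $\cL_{[\nu]}^M$ and $\cL_{[\nu]}^G$ are the local systems attached to the regular representations of $W_\nu=N_M(L)/L$ and $W_\nu=N_G(L)/L$ respectively, and the point on which everything hinges is that these groups coincide: the indecomposable decompositions $\cL_{[\nu]}^M\cong\bigoplus_\blambda(\mathcal{P}_\blambda^M)^{\oplus r_\blambda}$ and $\cL_{[\nu]}^G\cong\bigoplus_\blambda(\mathcal{P}_\blambda^G)^{\oplus r_\blambda}$ (with $\mathcal{P}_\blambda^{?}$ corresponding to the projective cover $\mathbf{P}_\blambda$ of $D^\blambda$) then match up, and the displayed isomorphism — being compatible with the identifications of endomorphism algebras with $\bk[W_\nu]$ — carries $\uInd^G_{M\subset Q}(\IC(Y_{[\nu]}^M,\mathcal{P}_\blambda^M))$ to $\IC(Y_{[\nu]}^G,\mathcal{P}_\blambda^G)$ for each $\blambda$. (Checking this last compatibility means re-examining the geometric input of Proposition~\ref{prop:induction-IC}, i.e. of \cite{letellier}, to see that $\uInd^G_{M\subset Q}$ induces the identity on $W_\nu$-local systems over the stratum $Y_{[\nu]}^G$.) Granting this, since $\IC(Y_{[\nu]}^{?},-)$ preserves indecomposables and heads \cite{juteau-aif}, and since the head of $\mathcal{P}_\blambda^{?}$ is the local system attached to $D^\blambda$, applying $\bT^{-1}$ together with \eqref{eqn:restrictions} gives $\Ind^G_{M\subset Q}(\mathcal{R}_\blambda^M)\cong\mathcal{R}_\blambda^G$, where $\mathcal{R}_\blambda^M$, $\mathcal{R}_\blambda^G$ are the indecomposable summands of $\cI_\nu^M$, $\cI_\nu$ with heads $\psi_\nu^M(\blambda)=\IC_{\blambda'}$ and $\psi_\nu(\blambda)$ (the latter by Lemma~\ref{lem:induction-cuspidal-new}\eqref{it:induction-head}). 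Applying the exact functor $\Ind^G_{M\subset Q}$ to the surjection $\mathcal{R}_\blambda^M\twoheadrightarrow\IC_{\blambda'}$ and using $\Ind^G_{M\subset Q}(\IC_{\blambda'})\ne0$ (Corollary~\ref{cor:induced-orbit}(2)), we conclude that $\psi_\nu(\blambda)$ is the head of $\Ind^G_{M\subset Q}(\IC_{\blambda'})$.

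Finally, Corollary~\ref{cor:induced-orbit}(1) shows $\Ind^G_{M\subset Q}(\IC_{\blambda'})$ is supported on the closure of the $G$-orbit induced from $\cO_{\blambda'}$, which has Jordan type $\psico_\nu(\blambda)$; hence its head $\psi_\nu(\blambda)=\IC_\kappa$ satisfies $\cO_\kappa\subseteq\overline{\cO_{\psico_\nu(\blambda)}}$, i.e. $\kappa\le\psico_\nu(\blambda)$. This is exactly \eqref{eqn:main-ineq-new}, and together with Lemma~\ref{lem:induction-mli} it completes the induction on $n$, hence the proof of Theorem~\ref{thm:explicit}. I expect the main obstacle to be the transitivity statement of the second paragraph: one has to verify that the isomorphism of Fourier transforms genuinely respects the labelling by $\blambda\in\uPart_\ell(\sm(\nu))$, for which the equality $N_G(L)/L=N_M(L)/L$ and a careful reading of the proof of Proposition~\ref{prop:induction-IC} are the key ingredients.
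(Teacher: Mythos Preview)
Your proposal is correct and follows essentially the same approach as the paper: the same intermediate Levi $M\cong\prod_i\GL(a_i\ell^i)$, the same use of the induction hypothesis on its factors, the same identification $N_M(L)/L=N_G(L)/L=W_\nu$, and the same Fourier-side argument matching indecomposable summands of $\cI_\nu^M$ and $\cI_\nu$ so as to obtain a surjection $\Ind^G_{M\subset Q}(\IC_{\blambda'})\twoheadrightarrow\psi_\nu(\blambda)$, followed by the support bound from Corollary~\ref{cor:induced-orbit}(1). The only noteworthy difference is how the key compatibility (that the isomorphism $\uInd^G_{M\subset Q}(\IC(Y_{[\nu]}^M,\cL_{[\nu]}^M))\cong\IC(Y_{[\nu]},\cL_{[\nu]})$ respects the $W_\nu$-labelling of summands) is justified: the paper phrases this as $W_\nu$-equivariance of the isomorphism and invokes the analogous argument in~\cite[\S7.6]{ahr}, whereas you propose tracing through the construction in Proposition~\ref{prop:induction-IC}/\cite{letellier}; both routes amount to the same verification.
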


\begin{proof}
Let $m_i = \sm_{\ell^i}(\nu)$, and let $\hat\nu$ be the partition whose parts are of the form $m_i\ell^i$, $i \ge 0$.  We can assume that the representatives $L \in \bL_\nu$ and $M \in \bL_{\hat\nu}$ are such that $L \subset M$, and such that the inclusion $L \hookrightarrow M$ is compatible with the direct product decompositions of those groups in the following way:
\[
\begin{array}{c@{}c@{}c@{}c@{}c@{}c@{}c@{}c@{}c}
L &{}={}&      \underbrace{\GL(1) \times \cdots \times \GL(1)}_{\text{$m_0$ copies}}
  &{}\times{}& \underbrace{\GL(\ell) \times \cdots \times \GL(\ell)}_{\text{$m_1$ copies}}
  &{}\times{}& \underbrace{\GL(\ell^2) \times \cdots \times \GL(\ell^2)}_{\text{$m_2$ copies}}
  &{}\times{}& \cdots \\
\downarrow && \downarrow && \downarrow && \downarrow \\
M &{}={}&      \GL(m_0)
  &{}\times{}& \GL(m_1\ell)
  &{}\times{}& \GL(m_2\ell^2)
  &{}\times{}& \cdots
\end{array}
\]
We can also assume that the corresponding parabolic subgroups $P \supset L$ and $Q \supset M$ are such that $P \subset Q$.  Let $\cI^M_\nu = \Ind_{L \subset P \cap M}^M \IC_{[\nu]}$. 

By assumption, $M$ is a proper subgroup of $G$, so we can apply the induction hypothesis to each factor of $M$. Notice that the relative Weyl group $N_M(L)/L$ is the same as $N_G(L)/L$, namely it equals $W_\nu=\fS_{m_0}\times\fS_{m_1}\times\fS_{m_2}\times\cdots$. By Lemma~\ref{lem:induction-cuspidal-new} applied to each factor of $M$, the simple quotients of $\cI^M_\nu$ are canonically indexed by $\uPart_\ell(\sm(\nu))$; let $\cF^{\blambda,M}$ denote the simple quotient corresponding to $\blambda\in\uPart_\ell(\sm(\nu))$. The induction hypothesis tells us that
\begin{equation}\label{eqn:im-simple}
\cF^{\blambda,M} \cong \IC_{(\lambda^{(1)})^\tr} \boxtimes \IC_{\ell \cdot (\lambda^{(\ell)})^\tr} \boxtimes \IC_{\ell^2 \cdot (\lambda^{(\ell^2)})^\tr} \boxtimes \cdots.
\end{equation}
By Corollary~\ref{cor:induced-orbit}(1), the support of $\Ind_{M \subset Q}^G \cF^{\blambda,M}$ is contained in the closure of the orbit in $\cN_G$ induced by the orbit $\cO_{(\lambda^{(1)})^\tr}\times\cO_{\ell\cdot(\lambda^{(\ell)})^\tr}\times\cO_{\ell^2\cdot(\lambda^{(\ell^2)})^\tr}\cdots$ in $\cN_M$. By \cite[Lemma~7.2.5]{cm}, that induced orbit is $\cO_{\psico_\nu(\blambda)}$. So to prove~\eqref{eqn:main-ineq-new}, it suffices to prove that
\begin{equation}
\label{eqn:ind-Fnu}
\Ind_{M \subset Q}^G \cF^{\blambda,M} \quad \text{surjects to} \quad \IC_{\psi_\nu(\blambda)}\qquad\text{for all $\blambda\in\uPart_\ell(\sm(\nu))$.}
\end{equation}

Let $G^\blambda$ denote the projective cover of $D^\blambda$ as a $\bk[W_\nu]$-module, and let $\cG^\blambda$ denote the corresponding local system on $Y_{[\nu]}$. As mentioned in the proof of Lemma~\ref{lem:induction-cuspidal-new}, the indecomposable direct summands of $\IC(Y_{[\nu]},\cL_{[\nu]})$ have the form $\IC(Y_{[\nu]},\cG^\blambda)$ for $\blambda\in\uPart_\ell(\sm(\nu))$, and the head of $\IC(Y_{[\nu]},\cG^\blambda)$ is $\IC(Y_{[\nu]},\cD^\blambda)$. Since $\IC(Y_{[\nu]},\cL_{[\nu]})\cong\bT_{\fg}((a_G)_!\cI_\nu)$ and $\bT_{\fg}$ is an equivalence, the indecomposable summands of $\cI_\nu$ are also indexed by $\uPart_\ell(\sm(\nu))$: if $\mathcal{Q}^\blambda$ denotes the indecomposable summand of $\cI_\nu$ such that $\bT_{\fg}((a_G)_!\mathcal{Q}^\blambda)\cong\IC(Y_{[\nu]},\cG^\blambda)$, then the head of $\mathcal{Q}^\blambda$ is $\IC_{\psi_\nu(\blambda)}$. 

We have analogous statements for $M$. Let $Y^M_{[\nu]}=M\cdot(\cO_{[\nu]}+\fz_{L}^{M,\circ})$ be the variety analogous to $Y_{[\nu]}$ for the group $M$ and let $\cL^M_{[\nu]}$ be the local system $(\xymap^M_{[\nu]})_*\underline{\bk}$ coming from the map $\xymap^M_{[\nu]}:M\times^L(\cO_{[\nu]}+\fz_{L}^{M,\circ})\to Y^M_{[\nu]}$. Let $\cG^{\blambda,M}$ be the local system on $Y^M_{[\nu]}$ corresponding to $G^\blambda$. Then the indecomposable summands of $\cI^M_\nu$ are also indexed by $\uPart_\ell(\sm(\nu))$: if $\mathcal{Q}^{\blambda,M}$ denotes the indecomposable summand of $\cI^M_\nu$ such that $\bT_{\mathfrak{m}}((a_M)_!\mathcal{Q}^{\blambda,M})\cong\IC(Y^M_{[\nu]},\cG^{\blambda,M})$, then the head of $\mathcal{Q}^{\blambda,M}$ is $\cF^{\blambda,M}$.

Since $\Ind_{M \subset Q}^G$ is exact, to prove~\eqref{eqn:ind-Fnu} it suffices to prove that
\begin{equation}
\Ind_{M \subset Q}^G \mathcal{Q}^{\blambda,M}\cong \mathcal{Q}^\blambda\qquad\text{for all $\blambda\in\uPart_\ell(\sm(\nu))$.}
\end{equation} 
By Lemma~\ref{lem:transitivity} we have an isomorphism $\Ind_{M \subset Q}^G \cI^{M}_\nu\cong \cI_\nu$, and the indecomposable summands of $\cI^{M}_\nu$ and $\cI_\nu$ are indexed by the same set $\uPart_\ell(\sm(\nu))$, so (using Corollary~\ref{cor:induced-orbit}(2)) we know that $\Ind_{M \subset Q}^G \mathcal{Q}^{\blambda,M}\cong \mathcal{Q}^{f(\blambda)}$ for some permutation $f$ of $\uPart_\ell(\sm(\nu))$. We need to show that $f$ is the identity.

Via Corollary~\ref{cor:restriction-Fourier}, the isomorphism $\Ind_{M \subset Q}^G \cI^{M}_\nu\cong \cI_\nu$ induces an isomorphism on the Fourier transform side:
\begin{equation} \label{eqn:hard}
\uInd_{M \subset Q}^G\bigl(\IC(Y^M_{[\nu]},\cL^M_{[\nu]})\bigr)\cong\IC(Y_{[\nu]},\cL_{[\nu]}).
\end{equation}
Likewise, on the indecomposable summands we have $\uInd_{M\subset Q}^G(\IC(Y^M_{[\nu]},\cG^{\blambda,M}))\cong\IC(Y_{[\nu]},\cG^{f(\blambda)})$ for $f$ as above, and we need to show that $f$ is the identity.

We have already seen that the group $W_\nu$ acts on $\cL_{[\nu]}$ and hence on $\IC(Y_{[\nu]},\cL_{[\nu]})$, since $\cL_{[\nu]}$ is the local system on $Y_{[\nu]}$ corresponding to the regular representation of $W_\nu$. Similarly, it acts on $\IC(Y^M_{[\nu]},\cL^M_{[\nu]})$ and hence on the left-hand side of~\eqref{eqn:hard}. We claim that the isomorphism~\eqref{eqn:hard} is $W_\nu$-equivariant. This claim is closely analogous to a statement proved in~\cite[\S7.6]{ahr}, concerning the Weyl group equivariance of an induction isomorphism of Grothendieck sheaves. The triple $T\subset L\subset G$ of \emph{loc}.~\emph{cit}.\ is here replaced by $L\subset M\subset G$ (so that we have a relative Weyl group rather than a Weyl group), and the Galois coverings on the regular semisimple sets are here replaced by Galois coverings on more general strata of the Lusztig stratification, but it is easy to check that each step of the proof of equivariance can be carried out in this new context.

We therefore have a commutative diagram:
\[
\xymatrix{
\bk[W_\nu] \ar@{=}[rr] \ar[d]_-{\wr} & & \bk[W_\nu] \ar[d]^-{\wr} \\
\End \bigl( \IC(Y^M_{[\nu]},\cL^M_{[\nu]}) \bigr) \ar[r]^-{\uInd_{M \subset Q}^G} & \End \bigl( \uInd_{M \subset Q}^G\bigl(\IC(Y^M_{[\nu]},\cL^M_{[\nu]})\bigr) \bigr) \ar[r]^-{\sim} & \End \bigl( \IC(Y_{[\nu]},\cL_{[\nu]}) \bigr)
}
\]
and we conclude that the left-hand map on the bottom line is an isomorphism. Splitting $\IC(Y^M_{[\nu]},\cL^M_{[\nu]})$ and $\IC(Y_{[\nu]},\cL_{[\nu]})$ into their direct summands, it follows that the map
\begin{equation} \label{eqn:less-hard}
\Hom\bigl(\IC(Y^M_{[\nu]},\cL^M_{[\nu]}),\IC(Y^M_{[\nu]},\cG^{\blambda,M})\bigr)\to\Hom\bigl(\IC(Y_{[\nu]},\cL_{[\nu]}),\IC(Y_{[\nu]},\cG^{f(\blambda)})\bigr)
\end{equation}
induced by $\uInd_{M \subset Q}^G$ is also an isomorphism, and is $W_\nu$-equivariant if we define the $W_\nu$-action on each side using the $W_\nu$-actions on $\IC(Y^M_{[\nu]},\cL^M_{[\nu]})$ and $\IC(Y_{[\nu]},\cL_{[\nu]})$. But since $\IC$ is fully faithful, the left-hand side of~\eqref{eqn:less-hard} is isomorphic to 
\[ \Hom(\cL^M_{[\nu]},\cG^{\blambda,M})\cong \Hom_{\bk[W_\nu]}(\bk[W_\nu],G^\blambda)\cong G^\blambda, \] 
and the right-hand side is similarly isomorphic to $G^{f(\blambda)}$. So $f$ must be the identity, as desired.
\end{proof}

\section{Recollement}
\label{sec:recollement}

We retain the convention that $G = \GL(n)$ and that $\ell > 0$.  In this section, we will show that $\Perv_G(\cN_G,\bk)$ exhibits a `recollement' structure, generalizing~\cite[Corollary~5.2]{ahjr}.  (The meaning of `recollement' will be recalled following Theorem~\ref{thm:recollement}.)  In this section, for brevity, we will write $a$ for $a_G: \cN_G \hookrightarrow \fg$ and $\bT$ for $\bT_\fg: \Perv^{\mathrm{con}}_G(\fg,\bk) \to \Perv^{\mathrm{con}}_G(\fg,\bk)$.

\subsection{Notation and statement of the result}

Recall from \S\ref{subsect:statements-main} that $\fL$ denotes the set of conjugacy classes of Levi subgroups admitting a cuspidal pair.  We equip $\fL$ with a partial order by declaring that $\bL \leq \bL'$ if and only if there exist representatives $L \in \bL$ and $L' \in \bL'$ such that $L \subset L'$. We choose an enumeration $\bL_1, \ldots, \bL_r$ of $\fL$ such that 
\[
\bL_i \leq \bL_j \Longrightarrow i \leq j.
\]
(For example, when $n = 6$ and $\ell = 2$, one could enumerate the elements of $\fL$ in the order in which they are listed in Figure~\ref{fig}.)  In particular, $\bL_1$ is the class of maximal tori, and if $n$ is a power of $\ell$, then $\bL_r=\{G\}$.

For $i=1, \ldots, r$ we choose some representative $L_i$ for $\mathbf{L}_i$, and some parabolic subgroup $P_i \subset G$ containing $L_i$ as a Levi factor.  Let $W_i := N_G(L_i)/L_i$. Recall (see \S\ref{ss:combinatorics}) that this group is isomorphic to a product of symmetric groups.  Let $\Rep(W_i,\bk)$ denote the category of finite-dimensional $\bk[W_i]$-modules.

Next, let $Y_i = Y_{(L_i,\cO_i)}$ (see \S\ref{ss:induction-IC}), where $\cO_i$ is the principal nilpotent orbit in $\cN_{L_i}$.  Explicit descriptions of these subsets $Y_i$ were given in Remark~\ref{rmk:GL}.  Note that $\overline{Y_1} = \fg$, and that 
\begin{equation}
\bL_i \leq \bL_j \Longleftrightarrow \overline{Y_i} \supset Y_j.
\end{equation}
Let $\IC_i = \IC(\cO_i,\ubk)$.  This is the unique cuspidal simple object in $\Perv_{L_i}(\cN_{L_i},\bk)$.  Finally, let $\cI_i = \Ind_{L_i \subset P_i}^G \IC_i$. With this notation, Theorem \ref{thm:main} tells us that for each simple object $\cF$ of $\Perv_{G}(\cN_G,\bk)$ there exists a unique $i$ such that $\cI_i$ surjects to $\cF$.

Let $F_i = \bigcup_{j \ge i} \overline{Y_j}$, and let $\cA_i$ be the following full subcategory of $\Perv_G(\cN_G,\bk)$:
\[
\cA_i = \{ \cF \in \Perv_G(\cN_G,\bk) \mid \text{$\bT(a_!\cF)$ is supported on $F_i \subset \fg$} \}.
\]
In particular, $\cA_1 = \Perv_G(\cN_G,\bk)$.   We also let $\cA_{r+1}=\{0\}$. The main result of this section is the following.

\newcommand{\siq}{s}
\newcommand{\sil}{s^{\mathsf{L}}}
\newcommand{\sir}{s^{\mathsf{R}}}
\newcommand{\tsil}{\tilde{s}^{\mathsf{L}}}
\newcommand{\tsir}{\tilde{s}^{\mathsf{R}}}
\newcommand{\eiq}{e}
\newcommand{\eil}{e^{\mathsf{L}}}
\newcommand{\eir}{e^{\mathsf{R}}}
\newcommand{\teil}{\tilde{e}^{\mathsf{L}}}
\newcommand{\teir}{\tilde{e}^{\mathsf{R}}}

\begin{thm}
\label{thm:recollement}
For $i=1, \cdots, r$ there exists a recollement diagram
\[
\xymatrix@C=2.5cm{
\cA_{i+1} \ar[r] |{\textstyle\,\siq_i\,}& \cA_i \ar@/^15pt/[l]|{\textstyle\,\sir_i\,} \ar@/_15pt/[l]|{\textstyle\,\sil_i\,} \ar[r]|{\textstyle\,\eiq_i\,} & \Rep(W_i,\bk) \ar@/^15pt/[l]|{\textstyle\,\eir_i\,} \ar@/_15pt/[l]|{\textstyle\,\eil_i\,}
}
\]
such that $\siq_i$ is the natural inclusion.
\end{thm}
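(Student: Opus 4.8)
The plan is to reduce the assertion to general facts about recollements of finite-length abelian categories (equivalently, of module categories over finite-dimensional algebras), the only essential new input being that the induced sheaf $\cI_i$ is a projective object of $\cA_i$.

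First I would record the formal setup. Since $a=a_G$ is a closed immersion, $a_!$ is exact on perverse sheaves, and $\bT$ is an exact autoequivalence of $\Perv^{\mathrm{con}}_G(\fg,\bk)$; thus $\bT\circ a_!$ is exact and the support condition defining $\cA_i$ is stable under subobjects, quotients and extensions. Hence each $\cA_i$ is a Serre subcategory of $\Perv_G(\cN_G,\bk)$, and $\cA_{i+1}$ is a Serre subcategory of $\cA_i$. As $\Perv_G(\cN_G,\bk)$ has finitely many simple objects, finite-dimensional $\Hom$-spaces and finite length, $\cA_i$ is equivalent to the category of finite-dimensional modules over a finite-dimensional $\bk$-algebra $A_i$. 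By Theorem~\ref{thm:main} (and the fact that the various $Y_j$ are disjoint, with $\overline{Y_j}\subset F_i$ if and only if $j\ge i$), the simple objects of $\cA_i$ are indexed by $\bigsqcup_{j\ge i}\Irr(\bk[W_j])$ and those of $\cA_{i+1}$ by $\bigsqcup_{j\ge i+1}\Irr(\bk[W_j])$; so the simple objects of $\cA_i$ not lying in $\cA_{i+1}$ are exactly the simple quotients of $\cI_i$, which by Lemma~\ref{lem:induction-cuspidal-new}\eqref{it:induction-head} are in bijection with $\Irr(\bk[W_i])$. Choosing an idempotent $e\in A_i$ whose indecomposable summands are the projective covers of these latter simples, we have $\cA_{i+1}\simeq (A_i/A_ieA_i)\text{-mod}$, and the standard recollement formalism for module categories over a finite-dimensional algebra then produces, with $\siq_i$ the natural inclusion, a recollement of $\cA_i$ by $\cA_{i+1}$ and $eA_ie\text{-mod}$: here $\sil_i\cF$ and $\sir_i\cF$ are the largest quotient, resp.\ subobject, of $\cF$ lying in $\cA_{i+1}$, $\eiq_i$ is the quotient functor $\cA_i\to eA_ie\text{-mod}$, and $\eil_i,\eir_i$ are its (automatically fully faithful) left and right adjoints. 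So everything comes down to identifying $eA_ie\text{-mod}$ with $\Rep(W_i,\bk)$ compatibly with $\eiq_i$.

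Next I would make that identification. The projective $A_i$-module $eA_i$ corresponds to an object $\cP_i$ of $\cA_i$, namely the direct sum of the projective covers \emph{in $\cA_i$} of the simple objects indexed by $\Irr(\bk[W_i])$, and $eA_ie\cong\End_{\cA_i}(\cP_i)^{\mathrm{op}}$. By Lemma~\ref{lem:induction-cuspidal-new}\eqref{it:induction-summ},\eqref{it:induction-head} the indecomposable summands of $\cI_i$ have pairwise non-isomorphic simple heads, which run over precisely the simple objects of $\cA_i$ outside $\cA_{i+1}$; therefore, once one knows that $\cI_i$ is projective in $\cA_i$, each such summand is the projective cover in $\cA_i$ of its head, so $\mathrm{add}(\cI_i)=\mathrm{add}(\cP_i)$ and $eA_ie\cong\End_{\cA_i}(\cI_i)^{\mathrm{op}}\cong\bk[W_i]$ by Lemma~\ref{lem:induction-cuspidal-new}\eqref{it:induction-endo}. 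Under this identification $\eiq_i\cong\Hom_{\cA_i}(\cI_i,-)$, with $\eil_i\cong\cI_i\otimes_{\bk[W_i]}(-)$ and $\eir_i$ its right adjoint; the recollement axioms ($\siq_i,\eil_i,\eir_i$ fully faithful, $\eiq_i\siq_i=0$, $\mathrm{im}(\siq_i)=\ker(\eiq_i)=\cA_{i+1}$, and $\sil_i\siq_i\cong\mathrm{id}\cong\sir_i\siq_i$, $\eiq_i\eil_i\cong\mathrm{id}\cong\eiq_i\eir_i$) then hold by the module-category formalism.

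The hard part will be to prove that $\cI_i$ is projective in $\cA_i$; since $\cA_i$ is a Serre subcategory this is equivalent to $\cI_i$ being projective in $\Perv_G(\cN_G,\bk)$. The plan here is as follows. We have $\cI_i=\Ind^G_{L_i\subset P_i}(\IC_{[\nu_i]})$, and $\Ind^G_{L_i\subset P_i}$ is left adjoint to the exact functor $\Res^G_{L_i\subset P_i}$ (\S\ref{ss:cuspidal}), hence preserves projectives; so it suffices to show that the cuspidal simple object $\IC_{[\nu_i]}$ is projective in $\Perv_{L_i}(\cN_{L_i},\bk)$. Using the decomposition of $L_i$ into factors $\GL(\ell^k)$ and the attendant external-tensor-product decomposition of the categories of equivariant perverse sheaves on nilpotent cones, this reduces to showing that $\IC_{(\ell^k)}$ is projective in $\Perv_{\GL(\ell^k)}(\cN_{\GL(\ell^k)},\bk)$, which is the real obstacle. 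I would deduce it from the cuspidality conditions by a dévissage: since ${}'\Res^{\GL(\ell^k)}_{L'\subset P'}$ is left adjoint to $\Ind^{\GL(\ell^k)}_{L'\subset P'}$, condition $(1')$ of Proposition~\ref{prop:conditions-cuspidal} gives $\Ext^\bullet_{\Db_{\GL(\ell^k)}(\cN,\bk)}(\IC_{(\ell^k)},\Ind^{\GL(\ell^k)}_{L'\subset P'}\cG)=0$ for every proper parabolic $P'$ with Levi $L'$ and every $\cG$; by Theorem~\ref{thm:cuspidals-GL-new} (already established for $\GL(\ell^k)$) and Corollary~\ref{cor:series}, every simple object of $\Perv_{\GL(\ell^k)}(\cN,\bk)$ other than $\IC_{(\ell^k)}$ is a quotient of such an induced sheaf, while $\Ext^1(\IC_{(\ell^k)},\IC_{(\ell^k)})=0$ because the restriction of $\IC_{(\ell^k)}$ to the open orbit $\cO_{(\ell^k)}$ admits no nonsplit self-extension (one has $\mathsf{H}^1_{\GL(\ell^k)}(\cO_{(\ell^k)},\bk)=0$, since the stabilizer of a regular nilpotent element is, up to homotopy, a torus); combining these yields $\Ext^1(\IC_{(\ell^k)},-)=0$. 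Finally I would note that $\cI_i$ is Verdier self-dual (Verdier duality commutes with $\Ind$ and fixes $\IC$-sheaves with constant coefficients) and that $\cA_i$ is stable under Verdier duality, so $\cI_i$ is also injective in $\cA_i$ — consistent with, and reproving, the existence of the right adjoints $\sir_i$ and $\eir_i$. Thus the main difficulty is concentrated in the projectivity of the cuspidal simple perverse sheaf on $\GL(\ell^k)$; all the remaining steps are formal consequences of Theorem~\ref{thm:main}, Lemma~\ref{lem:induction-cuspidal-new}, and the recollement calculus for finite-dimensional algebras.
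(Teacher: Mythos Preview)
Your overall architecture is sound and essentially parallel to the paper's: reduce to showing that $\cI_i$ is projective in $\cA_i$, then identify $\End_{\cA_i}(\cI_i)^{\mathrm{op}}$ with $\bk[W_i]$ via Lemma~\ref{lem:induction-cuspidal-new} and invoke the idempotent recollement formalism (the paper instead builds the six functors explicitly from $j_i:Y_i\hookrightarrow\fg$ and $k_{i+1}:F_{i+1}\hookrightarrow\fg$ through the Fourier transform, which amounts to the same thing).

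However, your proof of the projectivity of $\cI_i$ contains a genuine error. The assertion ``since $\cA_i$ is a Serre subcategory this is equivalent to $\cI_i$ being projective in $\Perv_G(\cN_G,\bk)$'' is false: projectivity in a Serre subcategory is strictly weaker. More importantly, the stronger statement you then try to prove---that $\IC_{(\ell^k)}$ is projective in $\Perv_{\GL(\ell^k)}(\cN_{\GL(\ell^k)},\bk)$---is itself false. Already for $\GL(2)$ in characteristic $2$ one has
\[
\Ext^1\bigl(\IC_{(2)},\delta_0\bigr)\cong\bigl(\mathsf{H}^{-1}(i_0^*\IC_{(2)})\bigr)^*\neq 0,
\]
since $\cO_{(2)}\simeq\mathbb{RP}^3$ and hence the stalk of $\IC_{(2)}$ at $0$ has nonzero $\mathsf{H}^{-1}$ (this is precisely the computation in Remark~\ref{rk:cuspidal-GL2}). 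Consequently $\cI_i$ is in general \emph{not} projective in the full category, only in $\cA_i$. Your d\'evissage breaks at ``combining these yields $\Ext^1(\IC_{(\ell^k)},-)=0$'': knowing $\Ext^\bullet(\IC_{(\ell^k)},\Ind\cG)=0$ and that non-cuspidal simples are quotients (or subobjects) of induced sheaves does not control $\Ext^1$ with those simples, because the relevant long exact sequence involves $\Hom$ or $\Ext^2$ with the kernel/cokernel, which can contain $\IC_{(\ell^k)}$ as a composition factor (cf.\ Remark~\ref{rmk:consequences-thm}).

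The paper's argument for projectivity (Proposition~\ref{prop:projective}) is genuinely different and uses the definition of $\cA_i$ in an essential way: one shows that for $\cF\in\cA_i$, every composition factor of $\Res^G_{L_i\subset P_i}(\cF)$ is isomorphic to the cuspidal $\IC_i$. This is proved by transitivity: a non-cuspidal composition factor would force $\cF$ to receive a surjection from some $\cI_j$ with $j<i$, contradicting $\cF\in\cA_i$. Then $\Hom(\cI_i,-)\cong\Hom(\IC_i,\Res^G_{L_i\subset P_i}(-))$ is exact on $\cA_i$ because $\Res$ is exact and $\Hom(\IC_i,-)$ is exact on the semisimple category generated by $\IC_i$ (using only $\Ext^1(\IC_i,\IC_i)=0$, which is your Lemma~\ref{lem:ext1}). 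You should replace your projectivity argument with this one.
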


This means that the following properties hold~\cite{bbd,kuhn}:
\begin{enumerate}
\item $\siq_i$ is right adjoint to $\sil_i$ and left adjoint to $\sir_i$.
\item $\eiq_i$ is right adjoint to $\eil_i$ and left adjoint to $\eir_i$.
\item $\siq_i$ is fully faithful, and $\eiq_i(M) = 0$ if and only if $M \cong \siq_i(N)$ for some $N \in \cA_{i+1}$.
\item The adjunction morphisms $\eiq_i\eir_i \to \id$ and $\id \to \eiq_i\eil_i$ are isomorphisms.
\end{enumerate}

\begin{rmk}
Recall that by \cite{mautner} the category $\Perv_G(\cN_G,\bk)$ is equivalent to the category of finite dimensional modules over the Schur algebra $S_{\bk}(n,n)$. Hence our description of the category $\Perv_G(\cN_G,\bk)$ via successive recollements of categories $\Rep(W_i,\bk)$ can be considered as a geometric analogue of a similar result for the category of modules over $S_\bk(n,n)$, obtained as a special case of a more general result in \cite{kuhn} (see Examples 1.2, 1.5, 1.6, and 1.7 in \emph{loc}.~\emph{cit}., the last of which corresponds to the example illustrated in our Figure~\ref{fig}). We do not address the question of comparing these constructions.
\end{rmk}

\subsection{Preliminary results}

We begin with a few alternative descriptions of $\cA_i$.

\begin{lem}\label{lem:recolle-serre}
The following three subcategories of $\Perv_G(\cN_G,\bk)$ coincide:
\begin{enumerate}
\item the category $\cA_i$;
\item the Serre subcategory generated by the simple quotients of the $\cI_j$ for $j \ge i$;\label{it:serre-head}
\item the Serre subcategory generated by the $\cI_j$ for $j \ge i$.\label{it:serre-ind}
\end{enumerate}
\end{lem}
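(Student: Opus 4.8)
The plan is to prove the chain of equalities by establishing two inclusions among Serre subcategories, then matching both with $\cA_i$. Denote temporarily by $\cB_i$ the Serre subcategory of $\Perv_G(\cN_G,\bk)$ generated by the simple quotients of the $\cI_j$ for $j\ge i$, and by $\cC_i$ the Serre subcategory generated by the $\cI_j$ themselves for $j\ge i$. Since every simple quotient of $\cI_j$ is in particular a subquotient of $\cI_j$, we immediately get $\cB_i\subseteq\cC_i$. For the reverse inclusion $\cC_i\subseteq\cB_i$, it suffices to show that every composition factor of $\cI_j$ (for $j\ge i$) lies in $\cB_i$; I would deduce this from Corollary~\ref{cor:induced-orbit}(1) together with transitivity (Lemma~\ref{lem:transitivity}) and Corollary~\ref{cor:series}. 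Concretely, each composition factor $\IC_\mu$ of $\cI_j$ has support contained in the closure of the induced orbit, hence $\bT(a_!\IC_\mu)$ has support contained in $\overline{Y_j}\subseteq F_i$ by Corollary~\ref{cor:induction-IC} and the description of induced orbits; so $\IC_\mu\in\cA_i$. But $\IC_\mu$ is itself the simple head of some $\cI_k$ by Theorem~\ref{thm:main}, and the condition $\IC_\mu\in\cA_i$ forces $Y_k\subseteq F_i$, i.e.\ $k\ge i$ (using that the $Y$'s are disjoint and $F_i=\bigcup_{j\ge i}\overline{Y_j}$, together with the enumeration convention $\bL_k\le\bL_j\Rightarrow k\le j$ and the equivalence $\bL_k\le\bL_j\Leftrightarrow\overline{Y_k}\supset Y_j$). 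Hence $\IC_\mu\in\cB_i$, proving $\cC_i\subseteq\cB_i$, so $\cB_i=\cC_i$.

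It remains to identify $\cA_i$ with $\cB_i$. The key point is that $\cA_i$ is a Serre subcategory: this is clear because $\bT\circ a_!$ is exact and the condition of being supported on the closed set $F_i$ is stable under subquotients and extensions of conic perverse sheaves. Now, the simple objects of $\cA_i$ are exactly those $\IC_\mu$ with $\bT(a_!\IC_\mu)$ supported on $F_i$. Using Lemma~\ref{lem:induction-cuspidal-new}(4) (valid for all $\nu\in\Part(n,\ell)$ including $\nu=(n)$, as noted after the proof of Theorems~\ref{thm:cuspidals-GL-new} and~\ref{thm:main}), we have $\bT(a_!\IC_\mu)\cong\IC(Y_{[\nu]},\cD)$ for the unique $\nu$ such that $\IC_\mu$ is a simple quotient of $\cI_\nu$, and some irreducible local system $\cD$. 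This $\IC$ sheaf has support $\overline{Y_{[\nu]}}=\overline{Y_j}$ where $\bL_j=\bL_\nu$; so $\IC_\mu\in\cA_i$ if and only if $\overline{Y_j}\subseteq F_i$, equivalently $j\ge i$. Therefore the simple objects of $\cA_i$ are precisely the simple quotients of the $\cI_j$ for $j\ge i$, and since $\cA_i$ is Serre it equals the Serre subcategory they generate, i.e.\ $\cA_i=\cB_i$. Combining with the previous paragraph gives $\cA_i=\cB_i=\cC_i$.

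I expect the main obstacle to be the bookkeeping that converts ``support of $\bT(a_!\IC_\mu)$ is contained in $F_i$'' into ``$\IC_\mu$ is a simple quotient of some $\cI_j$ with $j\ge i$'' — that is, verifying that the support of $\bT(a_!\IC_\mu)$ is honestly the full closure $\overline{Y_j}$ (not something smaller that might accidentally sit inside $F_i$ for a larger $i$) and that the disjointness of the strata $Y_j$ together with the enumeration convention really pins down $j$. This is exactly where Lemma~\ref{lem:induction-cuspidal-new}(4) and the disjointness statement established in the proof of Theorems~\ref{thm:cuspidals-GL-new}--\ref{thm:main} (from Remark~\ref{rmk:GL}) are used, and care is needed because $F_i$ is a union of closures of strata, so one must argue at the level of which stratum the dense part of the support lands in. Everything else is formal: exactness of $\bT\circ a_!$, the fact that Serre subcategories are determined by their simple objects, and the observation that induction sends objects supported on $\overline{\cO}$ to objects supported on the closure of the induced orbit.
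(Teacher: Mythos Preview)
Your proof is correct and uses the same ingredients as the paper, but the organization is less economical and one sentence is garbled. The paper proves the single circular chain $\cA_i' \subset \cA_i'' \subset \cA_i \subset \cA_i'$ (in your notation, $\cB_i\subseteq\cC_i\subseteq\cA_i\subseteq\cB_i$): the first inclusion is trivial; the second follows because $\bT(a_!\cI_j)$ is supported on $\overline{Y_j}\subseteq F_i$ (Proposition~\ref{prop:induction-IC} or Corollary~\ref{cor:induction-IC}); the third follows because for simple $\cF\in\cA_i$, the unique $k$ with $\cF$ in the head of $\cI_k$ satisfies $j_k^*\bT(a_!\cF)\neq 0$ (Lemma~\ref{lem:induction-cuspidal-new}), forcing $k\ge i$. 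Your argument for $\cC_i\subseteq\cB_i$ in the first paragraph routes through $\cA_i$ and is then subsumed by your second paragraph, so it is redundant.

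The sentence ``each composition factor $\IC_\mu$ of $\cI_j$ has support contained in the closure of the induced orbit, hence $\bT(a_!\IC_\mu)$ has support contained in $\overline{Y_j}$'' is a non sequitur: the support of $\IC_\mu$ inside $\cN_G$ says nothing about the support of its Fourier transform. What you want is that $\bT(a_!\IC_\mu)$ is a \emph{subquotient} of $\bT(a_!\cI_j)$ (by exactness of $\bT\circ a_!$), and the latter is supported on $\overline{Y_j}$ by Corollary~\ref{cor:induction-IC}. Finally, the worry in your last paragraph is unfounded: you never need the support of $\bT(a_!\IC_\mu)$ to be the \emph{full} closure $\overline{Y_j}$; you only need that its restriction to $Y_j$ is nonzero, which is immediate from $\bT(a_!\IC_\mu)\cong\IC(Y_j,\cD)$.
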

\begin{proof}
We begin by noting that $\cA_i$ is indeed a Serre subcategory of $\Perv_G(\cN_G,\bk)$.  This follows from the observation that in $\Perv_G(\fg,\bk)$, the property of being supported on the closed set $F_i$ is stable under extensions and subquotients.

Let $\cA_i'$ and $\cA_i''$ denote the categories described in~\eqref{it:serre-head} and~\eqref{it:serre-ind} above, respectively.  It is obvious that $\cA_i' \subset \cA_i''$.  Next, Proposition~\ref{prop:induction-IC} tells us that $\bT(a_!\cI_j)$ is supported on $\overline{Y_j}$, so $\cI_j \in \cA_i$ if $j \ge i$, and hence $\cA_i'' \subset \cA_i$.

Now consider a simple perverse sheaf $\cF \in \cA_i$.  By Theorem~\ref{thm:main}, there is a unique integer $k$ such that $\cF$ appears in the head of $\cI_k$. By Lemma \ref{lem:induction-cuspidal-new}, the restriction of $\bT(a_!\cF)$ to $Y_k$ is non zero, hence we must have $k \ge i$, so $\cF \in \cA_i'$.  We conclude that $\cA_i \subset \cA_i'$.
\end{proof}

\begin{lem}
\label{lem:ext1}
We have $\Ext^1_{\Perv_G(\cN_G,\bk)}(\IC_{(n)},\IC_{(n)}) = 0$.
\end{lem}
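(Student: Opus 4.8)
The plan is to reduce to a vanishing statement in the derived category and exploit the localization triangle attached to the open regular orbit. Since $\Ext^1$ in the abelian category of perverse sheaves is computed in the ambient triangulated category, it suffices to show that $\Hom_{\Db_G(\cN_G,\bk)}(\IC_{(n)},\IC_{(n)}[1])=0$. Write $j\colon\cO_{(n)}\hookrightarrow\cN_G$ for the inclusion of the principal (open, dense) nilpotent orbit and $i\colon Z\hookrightarrow\cN_G$ for the inclusion of its closed complement. Applying $\Hom_{\Db_G(\cN_G,\bk)}(\IC_{(n)},-)$ to the localization triangle relating $i_*i^!\IC_{(n)}$, $\IC_{(n)}$ and $j_*j^*\IC_{(n)}$ produces an exact sequence
\[
\Hom(\IC_{(n)},i_*i^!\IC_{(n)}[1])\to\Hom(\IC_{(n)},\IC_{(n)}[1])\to\Hom(\IC_{(n)},j_*j^*\IC_{(n)}[1]),
\]
so it is enough to show that both outer terms vanish.

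For the rightmost term, adjunction gives $\Hom(\IC_{(n)},j_*j^*\IC_{(n)}[1])\cong\Hom_{\Db_G(\cO_{(n)},\bk)}(j^*\IC_{(n)},j^*\IC_{(n)}[1])$. Since the component group $A_G(x)$ is trivial for $x\in\cO_{(n)}$, every $G$-equivariant local system on $\cO_{(n)}$ is constant, so $j^*\IC_{(n)}=\ubk_{\cO_{(n)}}[\dim\cO_{(n)}]$ and this Hom-space is $\mathsf{H}^1_G(\cO_{(n)},\bk)$. Now $\cO_{(n)}\cong G/G_x$, and $G_x$ is connected (again because $A_G(x)$ is trivial), with reductive quotient a one-dimensional torus and unipotent radical a contractible affine space; hence $\mathsf{H}^*_G(\cO_{(n)},\bk)\cong\mathsf{H}^*_{G_x}(\mathrm{pt},\bk)$ is a polynomial ring on a generator of degree $2$, so its degree-$1$ part vanishes.

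For the leftmost term, adjunction gives $\Hom(\IC_{(n)},i_*i^!\IC_{(n)}[1])\cong\Hom_{\Db_G(Z,\bk)}(i^*\IC_{(n)},i^!\IC_{(n)}[1])$. Since $\IC_{(n)}=j_{!*}(\ubk_{\cO_{(n)}}[\dim\cO_{(n)}])$, the defining support and cosupport conditions for the intermediate extension say that $i^*\IC_{(n)}$ lives in perverse degrees $\leq -1$ and $i^!\IC_{(n)}$ in perverse degrees $\geq 1$; thus $i^!\IC_{(n)}[1]$ lives in perverse degrees $\geq 0$, and the orthogonality axiom of the perverse $t$-structure on $\Db_G(Z,\bk)$ forces this Hom-space to be zero. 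Combining the two vanishings yields the claim.

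I do not expect a genuine obstacle here: the two inputs — that the $G$-equivariant cohomology of the regular orbit is concentrated in even degrees (which rests on the triviality of $A_G(x)$ for the regular nilpotent) and the cosupport estimate for $\IC_{(n)}$ — are both standard. The only point to watch is keeping the perverse-degree bookkeeping straight, so that after shifting the boundary term $i^!\IC_{(n)}[1]$ really lands in the range where the $t$-structure orthogonality applies.
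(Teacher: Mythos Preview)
Your proof is correct and proceeds along essentially the same lines as the paper's: both reduce to $\Hom$ in the derived category, use the open--closed decomposition for the regular orbit, and the key input in each is the vanishing $\mathsf{H}^1_G(\cO_{(n)},\bk)=0$ coming from the fact that the stabilizer of a regular nilpotent in $\GL(n)$ is connected with reductive quotient a torus. The only organizational difference is that the paper approaches via the surjection $\Delta_{(n)}={}^p(j_{(n)})_!\ubk[\dim\cO_{(n)}]\twoheadrightarrow\IC_{(n)}$ and a further truncation triangle, whereas you apply the localization triangle $i_*i^!\to\id\to j_*j^*$ directly to $\IC_{(n)}$ and dispatch the closed-support term via $t$-structure orthogonality; your route is marginally more streamlined but uses the same ingredients.
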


This lemma is an immediate consequence of~\cite[Theorem~4.1]{mautner} and general facts about modular representations of algebraic groups.  However, the following self-contained proof is perhaps easier to understand.

\begin{proof}
Let $j_{(n)} : \cO_{(n)} \hookrightarrow \cN_G$ denote the inclusion, and set
\[
\Delta'_{(n)}:= (j_{(n)})_! \underline{\bk}[\dim(\cO_{(n)})], \quad \Delta_{(n)}:={}^p (j_{(n)})_! \underline{\bk}[\dim(\cO_{(n)})].
\]
First we claim that
\begin{equation}
\label{eqn:ext-vanishing}
\Ext^1_{\Perv_G(\cN_G,\bk)}(\Delta_{(n)},\IC_{(n)}) = 0.
\end{equation}
By~\cite[Remarque~3.1.17(ii)]{bbd}, we have
\[
\Ext^1_{\Perv_{G}(\cN_G,\bk)}(\Delta_{(n)},\IC_{(n)}) = \Hom_{\Db_{G}(\cN_G,\bk)}(\Delta_{(n)},\IC_{(n)}[1]),
\]
so it is enough to prove that the right-hand side vanishes. Now consider the truncation triangle
\[
\mathcal{M} \to \Delta'_{(n)} \to \Delta_{(n)} \xrightarrow{+1}.
\]
As $\mathcal{M}$ is concentrated in perverse degrees${}\leq -1$, the induced morphism
\[
\Hom_{\Db_{G}(\cN_G,\bk)}(\Delta_{(n)},\IC_{(n)}[1]) \to \Hom_{\Db_{G}(\cN_G,\bk)}(\Delta'_{(n)},\IC_{(n)}[1])
\]
is injective, so it is enough to prove that the right-hand side vanishes. Now by adjunction we have
\begin{align*}
\Hom_{\Db_{G}(\cN_G,\bk)}(\Delta'_{(n)},\IC_{(n)}[1]) &\cong \Hom_{\Db_{G}(\cO_{(n)},\bk)}(\underline{\bk}[\dim(\cO_{(n)})], j_{(n)}^! \IC_{(n)}[1])\\
&\cong \mathsf{H}^1_G(\cO_{(n)},\bk) =0.
\end{align*}
For the last assertion, we have $\mathsf{H}^1_G(\cO_{(n)},\bk) \cong \mathsf{H}^1_{G_x}(\mathrm{pt},\bk)$ for any $x \in \cO_{(n)}$.  The equivariant cohomology group $\mathsf{H}^1_{G_x}(\mathrm{pt},\bk)$ vanishes because $G_x$ is the product of $\C^\times$ and a unipotent group.  This finishes the proof of \eqref{eqn:ext-vanishing}.

Now consider the exact sequence
\[
\ker \hookrightarrow \Delta_{(n)} \twoheadrightarrow \IC_{(n)},
\]
and the induced exact sequence
\[
\Hom(\ker,\IC_{(n)}) \to \Ext^1(\IC_{(n)},\IC_{(n)})
\to \Ext^1(\Delta_{(n)},\IC_{(n)}).
\]
We have checked in \eqref{eqn:ext-vanishing} that the third term in this sequence is trivial. The same is true for the first term since $\ker$ is supported on $\cN \smallsetminus \cO_{(n)}$. Hence the middle term is also $0$.
\end{proof}

\begin{prop}
\label{prop:projective}
The perverse sheaf $\cI_i$ is a projective object in $\cA_i$.
\end{prop}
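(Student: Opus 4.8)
The plan is to reduce projectivity of $\cI_i$ in $\cA_i$ to a statement about parabolic restriction, and then to settle that statement using the Fourier--Sato transform together with the structure of the Lusztig stratification. First, since $\cA_i$ is a Serre subcategory of the finite-length category $\Perv_G(\cN_G,\bk)$, it is itself of finite length and the natural map $\Ext^1_{\cA_i}(X,Y)\to\Ext^1_{\Perv_G(\cN_G,\bk)}(X,Y)$ is an isomorphism for $X,Y\in\cA_i$; so it suffices to show $\Ext^1_{\Perv_G(\cN_G,\bk)}(\cI_i,\cF)=0$ for every simple object $\cF$ of $\cA_i$. For such an $\cF$, Lemma~\ref{lem:recolle-serre} and Theorem~\ref{thm:main} give a unique $j$ with $\cF$ in the head of $\cI_j$, necessarily $j\ge i$, and Lemma~\ref{lem:induction-cuspidal-new}\eqref{it:induction-head} gives $\bT(a_!\cF)\cong\IC(Y_j,\cE)$ for some irreducible $G$-equivariant local system $\cE$ on $Y_j$. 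Using \cite[Remarque~3.1.17(ii)]{bbd} (as in the proof of Lemma~\ref{lem:ext1}), the adjunction $\Ind^G_{L_i\subset P_i}\dashv\Res^G_{L_i\subset P_i}$ on derived categories, and the exactness of $\Res^G_{L_i\subset P_i}$, I would identify
\[
\Ext^1_{\Perv_G(\cN_G,\bk)}(\cI_i,\cF)\;\cong\;\Ext^1_{\Perv_{L_i}(\cN_{L_i},\bk)}\bigl(\IC_i,\Res^G_{L_i\subset P_i}\cF\bigr).
\]

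The crux is then to prove that every composition factor of the perverse sheaf $\Res^G_{L_i\subset P_i}\cF$ is isomorphic to $\IC_i$. Granting this, the right-hand side above vanishes by dévissage from $\Ext^1_{\Perv_{L_i}(\cN_{L_i},\bk)}(\IC_i,\IC_i)=0$, and the latter follows from Lemma~\ref{lem:ext1} together with the Künneth formula, because $L_i$ is a product of groups $\GL(\ell^a)$ with $a\ge 0$ and $\IC_i=\IC_{[\nu_i]}$ is the exterior tensor product of the corresponding perverse sheaves $\IC_{(\ell^a)}$.

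For the crux I would argue by contradiction. Suppose $\IC(\mathscr{C},\cG)$ is a composition factor of $\Res^G_{L_i\subset P_i}\cF$ that is not cuspidal. Then, by the definition of cuspidality (Definition~\ref{def:cuspidal}, Proposition~\ref{prop:conditions-cuspidal}), $\Res^{L_i}_{M\subset R}\IC(\mathscr{C},\cG)\neq 0$ for some proper parabolic $R\subsetneq L_i$ with Levi factor $M$, and combining the exactness of $\Res^{L_i}_{M\subset R}$ with the transitivity isomorphism $\Res^{L_i}_{M\subset R}\circ\Res^G_{L_i\subset P_i}\cong\Res^G_{M\subset Q}$ (Lemma~\ref{lem:transitivity}, with $Q=R\,U_{P_i}\subsetneq G$) yields $\Res^G_{M\subset Q}\cF\neq 0$. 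Passing to the opposite parabolic $Q'$ and using \cite[Theorem~1]{braden} to replace $\Res^G_{M\subset Q}$ by ${}'\Res^G_{M\subset Q'}$, then applying Corollary~\ref{cor:restriction-Fourier}, I get that the Lie-algebra restriction ${}'\underline{\Res}^G_{M\subset Q'}\bigl(\IC(Y_j,\cE)\bigr)$ is nonzero. Since $Y_j=Y_{(L_j,\cO_j)}$ is the Lusztig stratum attached to the Levi $L_j$, this forces $L_j$ to be $G$-conjugate to a subgroup of $M$, i.e. $\bL_j\le\bL_M$; but $M$ is a proper Levi of $L_i$, so $\bL_M<\bL_i$, hence $\bL_j<\bL_i$, and by the order-compatibility of our enumeration of $\fL$ this gives $j<i$, contradicting $j\ge i$. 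Thus every composition factor of $\Res^G_{L_i\subset P_i}\cF$ is cuspidal, and by Theorem~\ref{thm:cuspidals-GL-new} applied to each factor of $L_i$ it must be $\IC_i$.

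The main obstacle is the vanishing of parabolic restriction on the $\IC$ sheaf of a Lusztig stratum attached to a Levi not conjugate into the target Levi --- in other words, the compatibility of $\Res$ with the generalized Springer series. In the $\bk=\Qlb$ setting this is part of Lusztig's theory; in the present generality I would either cite its Lie-algebra form from Letellier~\cite{letellier} and Lusztig~\cite{lusztig}, or re-derive it from the support and fibre-dimension estimates for the maps $\pi_{(L,\cO)}$ used in Propositions~\ref{prop:sufficient} and~\ref{prop:induction-IC}, which hold with arbitrary coefficients. Everything else in the argument is formal manipulation of the adjunctions, the Fourier--Sato transform, and the combinatorics of $\fL$ already established.
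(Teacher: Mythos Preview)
Your overall architecture matches the paper's: both reduce to the adjunction
\(\Hom(\cI_i,-)\cong\Hom(\IC_i,\Res^G_{L_i\subset P_i}(-))\), then prove the ``crux'' that every composition factor of \(\Res^G_{L_i\subset P_i}\cF\) (for \(\cF\in\cA_i\)) is the cuspidal \(\IC_i\), and conclude via Lemma~\ref{lem:ext1}. The divergence is in how you establish the crux.

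Your contradiction begins identically: if some composition factor is non-cuspidal, transitivity gives \(\Res^G_{M\subset Q}\cF\neq 0\) for a proper Levi \(M\subsetneq L_i\). But then, instead of staying on the nilpotent cone, you pass via Fourier--Sato to \({}'\uRes^G_{M\subset Q'}\bigl(\IC(Y_j,\cE)\bigr)\neq 0\) and invoke a structural claim: nonvanishing of Lie-algebra restriction of an \(\IC\) on the Lusztig stratum \(Y_{(L_j,\cO_j)}\) forces \(L_j\) to be \(G\)-conjugate into \(M\). This is precisely where the argument is incomplete. In characteristic~\(0\) this follows from Lusztig's restriction theorem for admissible complexes, which relies on the decomposition theorem and on cleanness of cuspidal local systems. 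Over a field of positive characteristic both ingredients can fail (the paper itself notes in Remark~\ref{rk:cuspidal-GL2} that cuspidal perverse sheaves need not be clean), and neither of your proposed fixes closes the gap: the results you would cite from Letellier and Lusztig are proved for \(\Qlb\)-coefficients, and the fibre-dimension estimates used in Propositions~\ref{prop:sufficient} and~\ref{prop:induction-IC} bound cohomological degrees but do not yield vanishing in the critical degree --- indeed, the surjection in Proposition~\ref{prop:sufficient} is not an isomorphism in general.

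The paper avoids this obstacle by staying on the nilpotent-cone side. From \(\Res^G_{M\subset QU_{P_i}}\cF\neq 0\), take a simple subobject \(\cF'\); adjunction gives a surjection \(\Ind^G_{M\subset QU_{P_i}}\cF'\twoheadrightarrow\cF\). Now apply Corollary~\ref{cor:series} inside \(M\) to find a Levi \(M'\subset M\subsetneq L_i\) carrying a cuspidal \(\cF''\) with \(\Ind^M_{M'\subset\ldots}\cF''\twoheadrightarrow\cF'\); transitivity of induction then yields \(\cI_j\twoheadrightarrow\cF\) for the index \(j\) with \(\bL_j=\bL_{M'}<\bL_i\), contradicting \(\cF\in\cA_i\). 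This argument uses only adjunctions and results already established in the paper, with no appeal to the fine structure of \(\IC\) sheaves on Lusztig strata. If you attempt to make your Fourier-side claim rigorous in the modular setting, you will find yourself reproducing exactly this induction-theoretic step.
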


\begin{proof}
By Lemma \ref{lem:recolle-serre}, $\cI_i$ is in $\cA_i$. Now
we must prove that the functor
\[
\Hom_{\Perv_G(\cN_G,\bk)}(\Ind^G_{L_i \subset P_i}(\IC_i), \cdot) \cong \Hom_{\Perv_{L_i}(\cN_{L_i},\bk)}(\IC_i, \Res^G_{L_i \subset P_i}(\cdot))
\]
is exact when restricted to $\cA_i$. We claim that any composition factor of a perverse sheaf of the form $\Res^G_{L_i \subset P_i}(\cF)$ for some $\cF \in \cA_i$ is isomorphic to $\IC_i$. This will imply the result in view of the exactness of the functor $\Res^G_{L_i \subset P_i}$ and Lemma~\ref{lem:ext1} (applied to $L_i$, which is a product of $\GL(m)$'s).

To prove the claim, we can assume that $\cF$ is simple. Since $\IC_i$ is the unique cuspidal simple perverse sheaf in $\Perv_{L_i}(\cN_{L_i},\bk)$, it is enough to prove that for any parabolic subgroup $Q \subsetneq L_i$ and any Levi factor $M$ of $Q$ we have
\[
\Res^{L_i}_{M \subset Q} \bigl( \Res^G_{L_i \subset P_i}(\cF) \bigr) = 0.
\]
However, by transitivity of restriction (Lemma~\ref{lem:transitivity}) we have
\[
\Res^{L_i}_{M \subset Q} \bigl( \Res^G_{L_i \subset P_i}(\cF) \bigr) \cong \Res^G_{M \subset QU_{P_i}}(\cF)
\]
where $U_{P_i}$ is the unipotent radical of $P_i$. Let us assume for a contradiction that $\Res^G_{M \subset QU_{P_i}}(\cF)$ is nonzero, so it has some simple subobject $\cF'$.  By adjunction, there is a nonzero morphism
\[
\Ind^G_{M \subset QU_{P_i}}(\cF') \to \cF,
\]
which must be surjective since $\cF$ is simple. By Corollary \ref{cor:series}, there exists a parabolic subgroup $Q' \subset Q \subsetneq L_i$, a Levi factor $M'$ of $Q'$ and a cuspidal simple perverse sheaf $\cF''$ on $\cN_{M'}$ such that $\cF'$ is a quotient of $\Ind^{M}_{M' \subset Q' \cap M}(\cF'')$. Then by exactness and transitivity of induction (see Lemma \ref{lem:transitivity}) we obtain a surjection
\[
\Ind^G_{M' \subset Q'U_{P_i}}(\cF'') \twoheadrightarrow \cF.
\]
But $M'$ belongs to some $G$-conjugacy class $\bL_j$ for some $j < i$.  By Lemma~\ref{lem:induction-cuspidal-new}, the existence of a surjection $\cI_j \to \cF$ contradicts the fact that $\cF$ is in $\cA_i$.
\end{proof}

\subsection{Proof of Theorem \ref{thm:recollement}}

First we define the functors $\eiq_i$. In fact, the functor $e_1$ is constructed in \cite{mautner}; the general construction will be very similar.

Choose a base point in the stratum $Y_i$. Then the fundamental group $B_i$ of the stratum for this choice of base point surjects to $W_i$. We denote by $j_i : Y_i \hookrightarrow \fg$ the inclusion, and consider the functor
\[
\alpha_i := j_i^* \circ \bT \circ a_! : \cA_i \to \Perv_G(Y_i,\bk).
\]
This functor is exact since $Y_i$ is open in the support of $\bT(a_!\cF)$ for any $\cF \in \cA_i$.

\begin{lem}
\label{lem:alpha_i}
For any $\cF$ in $\cA_i$, the object $\alpha_i(\cF)$ is a shifted $G$-equivariant local system on $Y_i$. Moreover, the corresponding representation of $B_i$ factors through $W_i$.
\end{lem}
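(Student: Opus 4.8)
The plan is to reduce to understanding the restriction to $Y_i$ of the Fourier--Sato transforms of the inductions $\cI_j$, and then to invoke Proposition~\ref{prop:induction-IC} (or rather its consequence~\eqref{eqn:anonymous}) together with the fact that a shifted local system on a smooth connected variety is exactly a perverse sheaf whose stalks are concentrated in a single degree with locally constant cohomology. First I would observe, using Lemma~\ref{lem:recolle-serre}\eqref{it:serre-ind}, that $\cA_i$ is the Serre subcategory generated by the $\cI_j$ with $j \ge i$, and that $\alpha_i$ is exact (as already noted in the text, because $Y_i$ is open in the support of $\bT(a_!\cF)$ for $\cF \in \cA_i$). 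Since being a shifted $G$-equivariant local system on $Y_i$ whose monodromy factors through $W_i$ is a property stable under subquotients and extensions in $\Perv_G(Y_i,\bk)$ — the category of shifted $G$-equivariant local systems on the smooth connected variety $Y_i$ is a Serre subcategory of $\Perv_G(Y_i,\bk)$, and the condition that the $B_i$-action factor through $W_i$ is likewise stable under subquotients and extensions of $\bk[B_i]$-modules — it suffices to verify the claim for $\cF = \cI_j$ with $j \ge i$.

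For $\cF = \cI_j$, isomorphism~\eqref{eqn:anonymous} gives $\bT(a_!\cI_j) \cong \IC(Y_{[\nu_j]}, \cL_{[\nu_j]})$, where $\cL_{[\nu_j]} = (\xymap_{[\nu_j]})_*\ubk$ and $\xymap_{[\nu_j]} : \widetilde{Y}_{[\nu_j]} \to Y_{[\nu_j]} = Y_j$ is the Galois covering with group $W_j$ (Lemma~\ref{lem:galois}). When $j = i$, restriction to $Y_i$ picks out the open stratum, so $\alpha_i(\cI_i) \cong \cL_{[\nu_i]}[\dim Y_i]$, which is manifestly a shifted $G$-equivariant local system whose monodromy representation of $B_i$ is the composite $B_i \twoheadrightarrow W_i \to \GL(\bk[W_i])$ (the regular representation), hence factors through $W_i$. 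When $j > i$, we have $\bL_i \not\le \bL_j$ is false in general — rather $\overline{Y_j} \subsetneq \overline{Y_i}$ need not contain $Y_i$; in fact since $Y_i \cap \overline{Y_j} = \varnothing$ whenever $j \ne i$ and $Y_i \not\subset \overline{Y_j}$, and the strata are disjoint, $j_i^*\IC(Y_j, -)$ is supported on $Y_i \cap \overline{Y_j}$. If this intersection is empty then $\alpha_i(\cI_j) = 0$ and we are done trivially; if not, one still knows $\IC(Y_j,-)$ restricted to the lower-dimensional locally closed piece $Y_i \cap \overline{Y_j}$ has stalks in strictly negative perverse degrees by the support condition for intersection cohomology, so $\alpha_i(\cI_j)$, being perverse and supported on a stratum of dimension $<\dim Y_i$, must vanish. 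Either way $\alpha_i(\cI_j) = 0$ for $j > i$, completing the check on generators.

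The main subtlety — and where I would spend the most care — is the bookkeeping about which $Y_j$ meet $Y_i$ inside $\overline{Y_i}$, and in particular confirming that $\alpha_i(\cI_j)$ vanishes for $j > i$ rather than merely being a shifted local system. The clean way to see this is: $\alpha_i(\cI_j) = j_i^* \IC(Y_j, \cL_{[\nu_j]})$ is a perverse sheaf on $Y_i$ whose support is contained in $Y_i \cap \overline{Y_j}$; by the enumeration convention $\bL_i \le \bL_j \Rightarrow i \le j$, and since $\overline{Y_i} \supset Y_j \iff \bL_i \le \bL_j$, for $j > i$ we cannot have $Y_i \subset \overline{Y_j}$, so $Y_i \cap \overline{Y_j}$ is a proper closed subset of $Y_i$, of dimension $< \dim Y_i$; a perverse sheaf supported on such a subset and obtained by $*$-restriction from an $\IC$-sheaf on a \emph{larger} stratum has stalk cohomology in degrees $< -\dim Y_i$ by the strict support inequalities defining $\IC(Y_j,-)$, hence is zero as a perverse sheaf on $Y_i$. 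Granting this, the lemma follows: $\alpha_i$ sends every generator of $\cA_i$ either to $\cL_{[\nu_i]}[\dim Y_i]$ or to $0$, and both are shifted $G$-equivariant local systems with monodromy factoring through $W_i$, so by the Serre-subcategory stability discussed above every object of $\cA_i$ has the same property.
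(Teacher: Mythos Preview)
Your overall strategy is sound and takes a genuinely different route from the paper. The paper does not argue via the Serre subcategory generated by the $\cI_j$; instead it first treats simple $\cF$ (using Lemma~\ref{lem:recolle-serre}\eqref{it:serre-head} and Lemma~\ref{lem:induction-cuspidal-new}\eqref{it:induction-head}), and then for general $\cF$ it produces a surjection $\cI_i^{\oplus m}\twoheadrightarrow\cF'$ onto a suitable subobject $\cF'\subset\cF$ with $\alpha_i(\cF')=\alpha_i(\cF)$, for which it needs the projectivity result Proposition~\ref{prop:projective}. Your approach bypasses Proposition~\ref{prop:projective} entirely, relying instead on the (correct, and not hard) fact that shifted $G$-equivariant local systems on the smooth connected variety $Y_i$ with monodromy factoring through $W_i$ form a Serre subcategory of $\Perv_G(Y_i,\bk)$. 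That fact deserves one sentence of justification: the long exact sequence of cohomology sheaves shows any extension in $\Perv_G(Y_i,\bk)$ of two such objects is again concentrated in degree $-\dim Y_i$, and an extension of local systems in the category of sheaves on a locally contractible space is locally constant.

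However, your handling of the case $j>i$ is muddled, and the ``fallback'' argument via $\IC$ support conditions is both unnecessary and wrong. The assertion that the stalks of $j_i^*\IC(Y_j,\cL_{[\nu_j]})$ lie in degrees ${<}-\dim Y_i$ is false: stalks of $\IC(Y_j,\cL_{[\nu_j]})$ live in degrees $\ge -\dim Y_j$, which for $\dim Y_j<\dim Y_i$ is the opposite inequality; and in any case a perverse sheaf on $Y_i$ supported on a proper closed subset need not vanish. The correct and much simpler argument is the one you nearly state in your second paragraph and then abandon: $\overline{Y_j}=X_{[\nu_j]}$ is a union of Lusztig strata (see \S\ref{ss:induction-IC}), and $Y_i$ is itself a single stratum, so either $Y_i\subset\overline{Y_j}$ or $Y_i\cap\overline{Y_j}=\varnothing$. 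The former would mean $\bL_j\le\bL_i$ and hence $j\le i$; thus for $j>i$ the intersection is empty and $\alpha_i(\cI_j)=0$ outright. Once you commit to this, your proof is complete.
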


\begin{proof}
Suppose first that $\cF$ is simple.  If $\cF$ actually lies in the smaller category $\cA_{i+1}$, then $\alpha_i(\cF) = 0$.  Otherwise, by Lemma~\ref{lem:recolle-serre}, $\cF$ is a quotient of $\cI_i$, and the claim follows from Lemma~\ref{lem:induction-cuspidal-new}\eqref{it:induction-head}.

For general $\cF$, the preceding paragraph already implies that $\alpha_i(\cF)$ is a shifted local system.  Now, choose $\cF' \subset \cF$ minimal with the property that $\alpha_i(\cF/\cF')=0$. Then by exactness $\alpha_i(\cF) \cong \alpha_i(\cF')$, so it is enough to prove the claim for $\cF'$. By minimality, no simple summand in the head of $\cF'$ is annihilated by $\alpha_i$.  Thus, every such simple summand lies in $\cA_i$, but not in $\cA_{i+1}$.  Invoking Lemma~\ref{lem:recolle-serre} again, we see that there is an $m \ge 1$ and a surjection $\cI_i^{\oplus m} \twoheadrightarrow \cF'/\rad \cF'$.  Proposition~\ref{prop:projective} then implies that this morphism lifts to a surjection $\cI_i^{\oplus m} \twoheadrightarrow \cF'$. By exactness, $\alpha_i(\cF')$ is a quotient of a direct sum of copies of $\alpha_i(\cI_i)$, which is the shifted local system corresponding to the regular representation of $W_i$.  The lemma follows.
\end{proof}

Note that the functors
\[
\Hom \bigl( (\varpi_{(L_i,\cO_i)})_* \underline{\bk}, \cdot \bigr) \quad \text{and} \quad \mathcal{L}_i'=\bigl( (\varpi_{(L_i,\cO_i)})_* \underline{\bk} \otimes \cdot \bigr)^{W_i}
\]
induce quasi-inverse equivalences of categories between the category of $\bk$-local systems on $Y_i$ such that the associated representation of $B_i$ factors through $W_i$, and the category $\Rep(W_i,\bk)$.
Using Lemma \ref{lem:alpha_i} we can define the exact functor
\[
\eiq_i : \cA_i \to \Rep(W_i,\bk), \quad \cF \mapsto \Hom \bigl( (\varpi_{(L_i,\cO_i)})_* \underline{\bk}, j_i^* \bT a_! \cF[-\dim (Y_i)] \bigr).
\]
We also define the functors
$
\teil_i, \teir_i : \Rep(W_i,\bk) \to \Perv_G(\cN_G, \bk)
$
by the formulas
\[
\teil_i := {}^p a^* \circ \bT^{-1} \circ {}^p (j_i)_! \circ \mathcal{L}_i, \quad \teir_i := {}^p a^! \circ \bT^{-1} \circ {}^p (j_i)_* \circ \mathcal{L}_i,
\]
where $\mathcal{L}_i(V)=\mathcal{L}_i'(V)[\dim (Y_i)]$ for any $V$ in $\Rep(W_i,\bk)$. (Note that these definitions are again direct generalizations of some constructions of \cite[\S 7]{mautner}.)

\begin{lem}
The functors $\teil_i$ and $\teir_i$ factor through functors
\[
\eil_i, \eir_i : \Rep(W_i,\bk) \to \cA_i.
\]
Moreover, $\eil_i$ is left adjoint to $\eiq_i$, and $\eir_i$ is right adjoint to $\eiq_i$.
\end{lem}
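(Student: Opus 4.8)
The plan is to deduce everything from three ingredients: that $\bT$ is an equivalence of categories; the standard adjunctions on perverse hearts attached to the closed immersion $a$ and to the locally closed immersion $j_i$; and one geometric input, namely that for $\cF\in\cA_i$ the $*$- and $!$-restrictions of $\bT(a_!\cF)$ to $Y_i$ coincide. I will treat $\teil_i$ in full and get $\teir_i$ dually.

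First I would establish, for all $V\in\Rep(W_i,\bk)$ and $\cF\in\cA_i$, a natural isomorphism $\Hom_{\Perv_G(\cN_G,\bk)}(\teil_i V,\cF)\cong\Hom_{\Rep(W_i,\bk)}(V,\eiq_i\cF)$, by composing: the adjunction $({}^p a^*\dashv a_*)$ between the perverse hearts (recall $a_!=a_*$ as $a$ is a closed immersion); the equivalence $\bT$ (which lets $\bT^{-1}$ be moved across a $\Hom$); and the adjunction $({}^p(j_i)_!\dashv {}^p H^0 j_i^!)$ for $j_i$. Writing $\cG:=\bT^{-1}({}^p(j_i)_!\cL_i V)$, so $\teil_i V={}^p a^*\cG$, this chain turns the left-hand side into $\Hom_{\Perv_G(Y_i,\bk)}\bigl(\cL_i V,\ {}^p H^0 j_i^!\,\bT(a_!\cF)\bigr)$. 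The crux is then to identify ${}^p H^0 j_i^!\,\bT(a_!\cF)$ with $\alpha_i(\cF)=j_i^*\bT(a_!\cF)$. Since $\bT(a_!\cF)$ is supported on the closed set $F_i$ and $Y_i$ is open in $F_i$ — this uses that the strata $Y_j$ are pairwise disjoint with each $\overline{Y_j}$ a union of strata, and that the enumeration forces $\overline{Y_j}\cap Y_i=\varnothing$ for $j>i$, so that $F_i\smallsetminus Y_i=(\overline{Y_i}\smallsetminus Y_i)\cup\bigcup_{j>i}\overline{Y_j}$ is closed — the elementary fact that $*$- and $!$-restriction to an open subset of the support of a complex agree gives $j_i^!\bT(a_!\cF)\cong j_i^*\bT(a_!\cF)=\alpha_i(\cF)$, which is already perverse by Lemma~\ref{lem:alpha_i}, hence equal to its own ${}^p H^0$. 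It remains only to unwind $\cL_i V=\cL_i'(V)[\dim Y_i]$ and $\eiq_i(\cF)=\Hom\bigl((\xymap_{(L_i,\cO_i)})_*\ubk,\ \alpha_i(\cF)[-\dim Y_i]\bigr)$, and to use that $\cL_i'$ and $\Hom\bigl((\xymap_{(L_i,\cO_i)})_*\ubk,-\bigr)$ are mutually inverse equivalences, to reach $\Hom_{\Rep(W_i,\bk)}(V,\eiq_i\cF)$.

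To see that $\teil_i$ actually takes values in $\cA_i$, I would show that the adjunction unit induces a natural epimorphism $\cG\twoheadrightarrow a_!(\teil_i V)$ in $\Perv_G(\fg,\bk)$: applying ${}^p H^0$ to the unit $\cG\to a_*a^*\cG$ (legitimate since $a^*\cG\in{}^p D^{\le 0}$ and $a_*$ is $t$-exact) yields $\cG\to a_*{}^p H^0(a^*\cG)=a_!(\teil_i V)$, and this is surjective because in the triangle $u_!u^*\cG\to\cG\to a_*a^*\cG\xrightarrow{+1}$ (with $u\colon\fg\smallsetminus\cN_G\hookrightarrow\fg$) the term $u_!u^*\cG$ lies in ${}^p D^{\le 0}$, so ${}^p H^1(u_!u^*\cG)=0$. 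Applying the exact equivalence $\bT$ produces a surjection ${}^p(j_i)_!\cL_i V\twoheadrightarrow\bT(a_!\teil_i V)$; as the source is supported on $\overline{Y_i}\subseteq F_i$, so is the target, i.e.\ $\teil_i(V)\in\cA_i$. Together with the previous paragraph this gives the factorization $\eil_i\colon\Rep(W_i,\bk)\to\cA_i$ and shows $\eil_i$ is left adjoint to $\eiq_i$. For $\teir_i$ I would run the dual argument: use the adjunctions $(a_!\dashv{}^p a^!)$ and $({}^p H^0 j_i^*\dashv{}^p(j_i)_*)$ — here $j_i^*\bT(a_!\cF)$ is already perverse, so no $*$/$!$ comparison is needed — to get $\Hom_{\Perv_G(\cN_G,\bk)}(\cF,\teir_i V)\cong\Hom_{\Rep(W_i,\bk)}(\eiq_i\cF, V)$, and use the monomorphism $a_!(\teir_i V)\hookrightarrow\bT^{-1}({}^p(j_i)_*\cL_i V)$ obtained from the counit $a_*a^!(-)\to(-)$ and the triangle with $u_*u^*$ to conclude $\teir_i(V)\in\cA_i$ and that $\eir_i$ is right adjoint to $\eiq_i$. (Since $\bk$ is a field one could alternatively note that $\cA_i$ is stable under Verdier duality and deduce the $\teir_i$ statements from the $\teil_i$ ones via $\teir_i\cong\mathbb D\teil_i\mathbb D$, up to replacing the $W_i$-module by its dual.)

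The main obstacle I anticipate is the geometric comparison $j_i^!\bigl(\bT(a_!\cF)\bigr)\cong j_i^*\bigl(\bT(a_!\cF)\bigr)$ for $\cF\in\cA_i$, which rests on the non-formal statement that $Y_i$ is open in the support $F_i$ of the transform; this is exactly where the chosen enumeration of $\fL$ and the union-of-strata structure of closures of Lusztig strata enter. The rest — keeping track of right/left $t$-exactness at each step so that the chain of adjunctions genuinely lands in the correct hearts, and verifying that the natural maps $\cG\to a_!{}^p a^*\cG$ and $a_!{}^p a^!\cG'\to\cG'$ are respectively an epimorphism and a monomorphism — is routine once these $t$-structure facts are in place.
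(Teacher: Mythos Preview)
Your proposal is correct and follows essentially the same approach as the paper's proof. The only cosmetic difference is that the paper passes directly to $j_i^*\bT(a_!\cF)$ in the adjunction chain (implicitly using that $Y_i$ is open in $F_i$, so that one may work inside $\Perv_G(F_i,\bk)$ where the open-immersion adjunction $({}^p(j')_!\dashv (j')^*)$ applies), whereas you use the locally closed adjunction $({}^p(j_i)_!\dashv{}^p H^0 j_i^!)$ and then make the identification $j_i^!\cong j_i^*$ on $\bT(a_!\cF)$ explicit via the same openness fact; you also spell out the surjectivity $\cG\twoheadrightarrow a_!\teil_i V$ via the recollement triangle where the paper simply asserts it.
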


\begin{proof}
We prove both claims for $\teil_i$; the case of $\teir_i$ is similar. To prove the first claim it is enough to check that $\bT(a_! \teil_i(V))$ is supported on $\overline{Y_i}$ for any $V$ in $\Rep(W_i,\bk)$. But $a_! \teil_i(V)$ is a quotient of $\bT^{-1}({}^p (j_i)_!\mathcal{L}_i(V))$, so $\bT(a_!\teil_i(V))$ is a quotient of ${}^p (j_i)_! \mathcal{L}_i(V)$ in the category of perverse sheaves on $\fg$.  Thus, it is indeed supported on $\overline{Y_i}$. The adjointness statement is proved by the following computation:
\begin{align*}
\Hom_{\cA_i} \bigl( \eil_i(V),\cF \bigr) 
&\cong \Hom_{\Perv_G(\cN_G,\bk)} \bigl( {}^p a^* \bT^{-1}( {}^p (j_i)_! \mathcal{L}_i(V)), \cF \bigr) \\
&\cong \Hom_{\Perv_G(\fg,\bk)} \bigl( {}^p (j_i)_!  \mathcal{L}_i(V), \bT ( a_!\cF) \bigr) \\ 
&\cong \Hom_{\Perv_G(Y_i,\bk)} \bigl( \mathcal{L}_i(V), j_i^*  \bT ( a_!\cF )\bigr) \\
&\cong \Hom_{\Rep(W_i,\bk)} \bigl( V,\eiq_i(\cF) \bigr),
\end{align*}
which finishes the proof.
\end{proof}

\begin{lem}
The adjunction morphisms
\[
\eiq_i \circ \eir_i \to \mathrm{id} \quad \text{and} \quad \mathrm{id} \to \eiq_i \circ \eil_i 
\]
are isomorphisms.
\end{lem}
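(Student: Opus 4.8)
The plan is to reduce both assertions to the case of the regular representation $\bk[W_i]$ of $W_i$, and then to trap $\eiq_i\eil_i(\bk[W_i])$ and $\eiq_i\eir_i(\bk[W_i])$ between bounds coming from the standard and costandard extensions along $j_i$ and from the triangle identities. Since $\eiq_i$ is exact and $\eil_i$ (resp.~$\eir_i$) is left (resp.~right) adjoint to it, the endofunctor $\eiq_i\circ\eil_i$ of $\Rep(W_i,\bk)$ is right exact and $\eiq_i\circ\eir_i$ is left exact. Now a natural transformation $\id\to F$ with $F$ right exact is an isomorphism as soon as it is an isomorphism on every projective object (take a projective presentation and compare cokernels), and dually a natural transformation $F\to\id$ with $F$ left exact is an isomorphism as soon as it is so on every injective object. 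As $W_i$ is a product of symmetric groups, $\bk[W_i]$ is a symmetric algebra, so the regular module $\bk[W_i]$ is at once a projective generator and an injective cogenerator of $\Rep(W_i,\bk)$; hence it suffices to prove that $\bk[W_i]\to\eiq_i\eil_i(\bk[W_i])$ and $\eiq_i\eir_i(\bk[W_i])\to\bk[W_i]$ are isomorphisms. (Alternatively, Verdier duality on $\Db_G(\fg,\bk)$ commutes with $\bT$ up to a shift and interchanges $({}^pa^*,{}^p(j_i)_!)$ with $({}^pa^!,{}^p(j_i)_*)$ and $\mathcal{L}_i(V)$ with $\mathcal{L}_i(V^*)$, so the two natural transformations are Verdier dual and only one case need be treated.)

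Put $\Delta_i:={}^p(j_i)_!\mathcal{L}_i(\bk[W_i])$ and $\nabla_i:={}^p(j_i)_*\mathcal{L}_i(\bk[W_i])$, so that $j_i^*\Delta_i\cong\mathcal{L}_i(\bk[W_i])\cong j_i^*\nabla_i$, while by definition $\eil_i(\bk[W_i])={}^pa^*(\bT^{-1}\Delta_i)$ and $\eir_i(\bk[W_i])={}^pa^!(\bT^{-1}\nabla_i)$. The key point is that $\bT(a_!\eil_i(\bk[W_i]))$ is a quotient of $\Delta_i$, and $\bT(a_!\eir_i(\bk[W_i]))$ a subobject of $\nabla_i$, in $\Perv_G(\fg,\bk)$. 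For the former: $a^*$ is right $t$-exact and $a_!=a_*$ is $t$-exact, so $a_!\eil_i(\bk[W_i])={}^pH^0\bigl(a_!a^*(\bT^{-1}\Delta_i)\bigr)$; applying ${}^pH^0$ to the recollement triangle $b_!b^*(\bT^{-1}\Delta_i)\to\bT^{-1}\Delta_i\to a_!a^*(\bT^{-1}\Delta_i)\xrightarrow{+1}$ attached to the closed embedding $a\colon\cN_G\hookrightarrow\fg$ and its open complement $b$, and using that $b_!$ is right $t$-exact, yields a surjection $\bT^{-1}\Delta_i\twoheadrightarrow a_!\eil_i(\bk[W_i])$ of perverse sheaves on $\fg$; applying the equivalence $\bT$ gives $\Delta_i\twoheadrightarrow\bT(a_!\eil_i(\bk[W_i]))$. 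The assertion for $\eir_i$ is dual, using that $a^!$ is left $t$-exact, $b_*$ is left $t$-exact, and the other recollement triangle. Applying the exact functor $j_i^*$ and recalling $\alpha_i=j_i^*\circ\bT\circ a_!$, we deduce that $\alpha_i(\eil_i(\bk[W_i]))$ is a quotient, and $\alpha_i(\eir_i(\bk[W_i]))$ a subobject, of the shifted local system $\mathcal{L}_i(\bk[W_i])$; transporting this through the equivalence between local systems on $Y_i$ with monodromy factoring through $W_i$ and $\Rep(W_i,\bk)$ (used to define $\eiq_i$), it follows that $\eiq_i\eil_i(\bk[W_i])$ is a quotient of $\bk[W_i]$ and $\eiq_i\eir_i(\bk[W_i])$ a subobject of $\bk[W_i]$.

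For the matching bounds, recall that $(\cO_i,\ubk)$ is the unique cuspidal pair for $L_i$ (Theorem~\ref{thm:cuspidals-GL-new}), so Corollary~\ref{cor:induction-IC} gives $\bT(a_!\cI_i)\cong\IC(Y_i,(\varpi_{(L_i,\cO_i)})_*\ubk)$, whence $\alpha_i(\cI_i)\cong\mathcal{L}_i(\bk[W_i])$ and $\eiq_i(\cI_i)\cong\bk[W_i]$. Evaluating the triangle identities for $\eil_i\dashv\eiq_i$ and $\eiq_i\dashv\eir_i$ at $\cI_i$ then shows that the map $\eiq_i\eil_i(\bk[W_i])\to\bk[W_i]$ obtained by applying $\eiq_i$ to the counit $\eil_i\eiq_i(\cI_i)\to\cI_i$ is a split epimorphism, and the map $\bk[W_i]\to\eiq_i\eir_i(\bk[W_i])$ obtained from the unit $\cI_i\to\eir_i\eiq_i(\cI_i)$ is a split monomorphism. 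Combining with the previous paragraph, $\eiq_i\eil_i(\bk[W_i])$ is a quotient of $\bk[W_i]$ admitting an epimorphism onto $\bk[W_i]$, so comparing composition lengths it is isomorphic to $\bk[W_i]$ and that epimorphism is an isomorphism; the triangle identity then forces the unit $\bk[W_i]\to\eiq_i\eil_i(\bk[W_i])$ to be its inverse, hence an isomorphism. Dually, $\eiq_i\eir_i(\bk[W_i])$ is a subobject of $\bk[W_i]$ which surjects onto $\bk[W_i]$, so it equals $\bk[W_i]$ and the counit $\eiq_i\eir_i(\bk[W_i])\to\bk[W_i]$ is an isomorphism; with the first paragraph this proves the lemma. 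I expect the only genuinely delicate step to be the quotient/subobject claim of the second paragraph, which depends on correctly juggling the one-sided $t$-exactness of $a^*$, $a^!$, $a_!=a_*$, $b_!$, $b_*$, $\bT$ and of the perverse truncations; everything else is formal.
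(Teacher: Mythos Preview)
Your proof is correct and self-contained. The paper does not actually give an argument: it simply observes that the case $i=1$ is \cite[Proposition~7.2]{mautner} and asserts that the same proof works in general. Your argument therefore supplies exactly the details the paper omits, and is in the spirit of Mautner's approach (reduction to the regular representation, comparison via the standard/costandard extensions along $j_i$, and control via the triangle identities). The key technical points---that $a_!\eil_i(\bk[W_i])$ is a quotient of $\bT^{-1}\Delta_i$ via the gluing triangle for the closed/open pair $(a,b)$, and that $j_i^*$ preserves this surjection because both perverse sheaves are supported on $\overline{Y_i}$---are handled correctly.

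One very small expository remark: in your final paragraph you phrase the length comparison for $\eiq_i\eil_i(\bk[W_i])$ as ``a quotient of $\bk[W_i]$ admitting an epimorphism onto $\bk[W_i]$'', but what you have actually proved (and what you need) is slightly cleaner: the unit $\eta_{\bk[W_i]}$ is a split monomorphism by the triangle identity, and the target has length at most that of $\bk[W_i]$ by the quotient statement, so $\eta_{\bk[W_i]}$ is an isomorphism directly. Your conclusion is of course the same.
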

\begin{proof}
When $i = 1$, this statement is~\cite[Proposition~7.2]{mautner}.  In fact, the  argument given there establishes the result for general $i$.  We omit further details.
\end{proof}

Now we need to define the functors $\sil_i$, $\siq_i$ and $\sir_i$. The functor $\siq_i$ is simply the natural inclusion. To define $\sil_i$ and $\sir_i$, consider the closed inclusion
\[
k_i : F_i \hookrightarrow \fg.
\]
Then we define the functors
$
\tsil_i, \tsir_i : \cA_i \to \Perv_G(\cN_G,\bk)
$
by the formulas
\[
\tsil_i := {}^p a^* \circ \bT^{-1} \circ (k_{i+1})_* \circ {}^p (k_{i+1})^* \circ \bT \circ a_!, \quad \tsir_i := {}^p a^! \circ \bT^{-1} \circ (k_{i+1})_* \circ {}^p (k_{i+1})^! \circ \bT \circ a_!.
\]

\begin{lem}
The functors $\tsil_i$ and $\tsir_i$ factor through functors
\[
\sil_i, \sir_i : \cA_i \to \cA_{i+1}.
\]
Moreover, $\sil_i$ is left adjoint to $\siq_i$, and $\sir_i$ is right adjoint to $\siq_i$.
\end{lem}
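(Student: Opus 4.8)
The plan is to realize $\tsil_i$ and $\tsir_i$ as the ``closed'' part of the standard recollement attached to the decomposition of $F_i$ into the open subset $Y_i$ and the closed subset $F_{i+1}$, transported to $\Perv_G(\cN_G,\bk)$ through the fully faithful functor $\cF\mapsto\bT(a_!\cF)$. Two elementary facts will be used repeatedly. First, since $a$ is a closed immersion $a_!$ is fully faithful, and since $\bT$ is an equivalence one has $\Hom_{\cA_i}(\cF_1,\cF_2)\cong\Hom_{\Perv^{\mathrm{con}}_G(\fg,\bk)}\bigl(\bT(a_!\cF_1),\bT(a_!\cF_2)\bigr)$ for all $\cF_1,\cF_2\in\cA_i$. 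Second, if $\cG\in\cA_{i+1}$ then $\bT(a_!\cG)$ is a perverse sheaf supported on $F_{i+1}$, so $\bT(a_!\cG)=(k_{i+1})_*(k_{i+1})^*\bT(a_!\cG)$ with $(k_{i+1})^*\bT(a_!\cG)$ perverse; in particular ${}^p (k_{i+1})^*\bT(a_!\cG)=(k_{i+1})^*\bT(a_!\cG)$.

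First I would establish the factorization. The key point is that for any conic perverse sheaf $\mathcal{M}$ on $\fg$, the object $a_!\,{}^p a^*\mathcal{M}$ is a \emph{quotient} of $\mathcal{M}$ in $\Perv^{\mathrm{con}}_G(\fg,\bk)$: applying ${}^p \mathsf{H}^0$ to the distinguished triangle $j_!j^*\mathcal{M}\to\mathcal{M}\to a_*a^*\mathcal{M}\xrightarrow{+1}$, where $j:\fg\smallsetminus\cN_G\hookrightarrow\fg$ is the open complement of $a$, and using that $j_!$ is right $t$-exact while $a_*=a_!$ is $t$-exact, identifies $a_!\,{}^p a^*\mathcal{M}$ with the cokernel of ${}^p \mathsf{H}^0(j_!j^*\mathcal{M})\to\mathcal{M}$. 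Dually, $a_!\,{}^p a^!\mathcal{M}$ is a \emph{subobject} of $\mathcal{M}$. Applying the first statement with $\mathcal{M}=\bT^{-1}(k_{i+1})_*\,{}^p (k_{i+1})^*\bT(a_!\cF)$ shows that $\bT(a_!\tsil_i(\cF))$ is a quotient of $(k_{i+1})_*\,{}^p (k_{i+1})^*\bT(a_!\cF)$, which is supported on $F_{i+1}$; hence $\tsil_i(\cF)\in\cA_{i+1}$. Likewise $\bT(a_!\tsir_i(\cF))$ is a subobject of $(k_{i+1})_*\,{}^p (k_{i+1})^!\bT(a_!\cF)$, so $\tsir_i(\cF)\in\cA_{i+1}$. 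This parallels the argument already given above for $\teil_i$ and $\teir_i$, and it defines the asserted functors $\sil_i,\sir_i:\cA_i\to\cA_{i+1}$.

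For the adjunction $\Hom_{\cA_{i+1}}(\sil_i(\cF),\cG)\cong\Hom_{\cA_i}(\cF,\siq_i(\cG))$, with $\cF\in\cA_i$ and $\cG\in\cA_{i+1}$, the plan is to compute both sides by chains of standard isomorphisms and check they agree. On one side, using successively the adjunction between ${}^p a^*$ and $a_*=a_!$ on perverse sheaves, the equivalence $\bT$, the adjunction $\bigl((k_{i+1})_*,(k_{i+1})^!\bigr)$, and the fact that $(k_{i+1})^!\bT(a_!\cG)=(k_{i+1})^*\bT(a_!\cG)$ is perverse (since $\bT(a_!\cG)$ is perverse and supported on $F_{i+1}$), one reduces $\Hom_{\cA_{i+1}}(\sil_i(\cF),\cG)$ to $\Hom_{\Perv_G(F_{i+1},\bk)}\bigl({}^p (k_{i+1})^*\bT(a_!\cF),\,{}^p (k_{i+1})^*\bT(a_!\cG)\bigr)$. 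On the other side, $\Hom_{\cA_i}(\cF,\cG)=\Hom_{\Perv^{\mathrm{con}}_G(\fg,\bk)}\bigl(\bT(a_!\cF),\bT(a_!\cG)\bigr)$ by the first fact; rewriting $\bT(a_!\cG)=(k_{i+1})_*\,{}^p (k_{i+1})^*\bT(a_!\cG)$ and applying the $\bigl({}^p (k_{i+1})^*,(k_{i+1})_*\bigr)$-adjunction turns this into the same group, and $\siq_i(\cG)=\cG$. Thus $\sil_i$ is left adjoint to $\siq_i$. The right adjointness of $\sir_i$ follows dually, using that $a_!$ is left adjoint to ${}^p a^!$, together with the analogous adjunctions for $k_{i+1}$ and the support condition on $\cG$.

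The main obstacle is purely bookkeeping: at every step one passes between a $\Hom$ in an equivariant derived category and a $\Hom$ in a heart, and one must verify that the perverse truncation hidden in ${}^p a^*$, ${}^p a^!$, ${}^p (k_{i+1})^*$ or ${}^p (k_{i+1})^!$, and in each adjunction invoked, does not affect the group in question. This is precisely where one uses the right- or left-$t$-exactness of the plain functors $a^*$, $a^!$, $(k_{i+1})^*$, $(k_{i+1})^!$ and the $t$-exactness of the closed-immersion pushforwards $a_!=a_*$ and $(k_{i+1})_*$. No geometric input is needed beyond the fact that $\bT(a_!\cI_j)$ is supported on $\overline{Y_j}$ (Proposition~\ref{prop:induction-IC}); note that $F_{i+1}=\bigcup_{j\ge i+1}\overline{Y_j}$ is a $G$-stable \emph{conic} closed subset of $\fg$, so all the functors above preserve the conic equivariant derived category.
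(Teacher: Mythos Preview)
Your proposal is correct and follows essentially the same route as the paper's proof: both establish the factorization by showing that $\bT(a_!\tsil_i(\cF))$ is a quotient of $(k_{i+1})_*\,{}^p (k_{i+1})^*\bT(a_!\cF)$, and both derive the adjunction by the same chain of standard adjunctions for $a$, $k_{i+1}$ and the equivalence $\bT$, exploiting that $\bT(a_!\cG)$ is already supported on $F_{i+1}$ when $\cG\in\cA_{i+1}$. Your write-up is slightly more explicit than the paper's in justifying why $a_!\,{}^p a^*\mathcal{M}$ is a quotient of $\mathcal{M}$ (via the open--closed triangle) and in tracking the $t$-exactness needed at each adjunction step, but the argument is the same.
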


\begin{proof}
We consider only the case of $\tsil_i$; the case of $\tsir_i$ is similar. We have to prove that if $\cF$ is in $\cA_i$, then
\[
\tsil_i(\cF) = {}^p a^*  \bT^{-1} ( (k_{i+1})_*  {}^p (k_{i+1})^* \bT ( a_!\cF))
\]
is in $\cA_{i+1}$, i.e.~$\bT ( a_! \tsil_i(\cF))$ is supported on $F_{i+1}$. However, $a_! \tsil_i(\cF)$ is a quotient of $\bT^{-1} (k_{i+1})_* {}^p (k_{i+1})^* \bT (a_!\cF)$, whose image under $\bT$ is clearly supported on  $F_{i+1}$. Hence the same is true for $a_! \tsil_i(\cF)$.

We now turn to the adjunction statement.  For $\cF$ in $\cA_i$ and $\cF'$ in $\cA_{i+1}$, we have:
\begin{align*}
\Hom_{\cA_{i+1}} \bigl( \sil_i(\cF),\cF' \bigr)
&= \Hom_{\cA_{i+1}} \bigl( {}^p a^* \bT^{-1} (k_{i+1})_* {}^p (k_{i+1})^* \bT (a_!\cF),\cF' \bigr) \\
&\cong \Hom_{\Perv_G(\fg,\bk)} \bigl( (k_{i+1})_* {}^p (k_{i+1})^* \bT (a_!\cF), \bT (a_!\cF') \bigr) \\ 
&\cong \Hom_{\Perv_G(F_{i+1},\bk)} \bigl( {}^p (k_{i+1})^* \bT (a_!\cF), {}^p (k_{i+1})^! \bT (a_!\cF') \bigr) \\
&\cong \Hom_{\Perv_G(\fg,\bk)} \bigl( \bT (a_!\cF), (k_{i+1})_* {}^p (k_{i+1})^! \bT (a_!\cF') \bigr).
\end{align*}
By definition of $\cF'$, the perverse sheaf $\bT (a_! \cF')$ is supported on $F_{i+1}$, so we have $(k_{i+1})_* {}^p (k_{i+1})^! \bT (a_!\cF') \cong \bT (a_!\cF')$. Hence we obtain
\[
\Hom \bigl( \bT (a_!\cF), (k_{i+1})_* {}^p (k_{i+1})^! \bT (a_!\cF') \bigr) \cong \Hom \bigl( \bT (a_!\cF), \bT (a_!\cF') \bigr) \cong \Hom(\cF,\cF'),
\]
which concludes the proof.
\end{proof}

We have checked conditions $(1)$, $(2)$ and $(4)$ in the definition of a recollement diagram. Condition $(3)$ is obvious from the definitions, so the proof of Theorem \ref{thm:recollement} is complete. \qed


\end{document}